\documentclass[11pt]{amsart}

\usepackage[T1]{fontenc}
\usepackage[utf8]{inputenc}
\usepackage{textcomp}
\usepackage{lmodern}
\usepackage{microtype}

\usepackage{amssymb}
\usepackage{amsthm}
\usepackage{amscd}

\usepackage{hyperref}

\usepackage{mathscinet}
\usepackage[giveninits=true,doi=false,url=false,sortcites,backend=biber,backref=true]{biblatex}
\addbibresource{../../Bibliographies/main.bib}


\newtheorem{theorem}{Theorem}[section]

\newtheorem{lemma}[theorem]{Lemma}

\newtheorem{cor}[theorem]{Corollary}

\theoremstyle{definition}
\newtheorem{definition}[theorem]{Definition}






\allowdisplaybreaks[2]

\newcommand{\ACFm}{\ensuremath{\mathrm{ACF}(\mathfrak{m},\Sigma)}}
\newcommand{\ACF}{\ensuremath{\mathrm{ACF}}}
\newcommand{\RCF}{\ensuremath{\mathrm{RCF}_0}}
\newcommand{\RCFU}{\ensuremath{\mathrm{RCF}(\Gamma,\Sigma)}}

\begin{document}

\title{From Saturated Embedding Tests to Explicit Algorithms}
\author{Henry Towsner}
\date{\today}
\thanks{Partially supported by NSF grant DMS-2054379}
\address {Department of Mathematics, University of Pennsylvania, 209 South 33rd Street, Philadelphia, PA 19104-6395, USA}
\email{htowsner@math.upenn.edu}
\urladdr{\url{http://www.math.upenn.edu/~htowsner}}

\begin{abstract}
  Quantifier elimination theorems show that each formula in a certain theory is equivalent to a formula of a specific form---usually a quantifier-free one, sometimes in an extended language. Model theoretic embedding tests are a frequently used tool for proving such results without providing an explicit algorithm.

  We explain how proof mining methods can be adapted to apply to embedding tests, and provide two explicit examples, giving algorithms for theories of algebraic and real closed fields with a distinguished small subgroup corresponding to the embedding test proofs given by van den Dries and G\"unaydin  \cite{MR2235481}.
\end{abstract}

\maketitle

\section{Introduction}

In this paper we consider a case where model theoretic arguments are used to indirectly proof the existence of an algorithm---embedding tests for proving quantifier elimination---and discuss, with some examples, how to extract an explicit algorithm from such a proof.

When $T$ is a computable (or computably enumerable) theory, the statement that $T$ admits quantifiers can itself be formulated as a $\Pi_2$ sentence in the language of arithmetic: elimination of quantifiers means that, for each formula $\phi(\vec x)$ there is a quantifier-free formula $\psi(\vec x)$ and a proof in first-order logic showing that $T\vdash\forall\vec x\,\phi(\vec x)\leftrightarrow\psi(\vec x)$.

When we have a true $\Pi_2$ sentence, we always have a corresponding total computable function: given $\phi(\vec x)$, we can simply search blindly through all possible quantifier-free formula $\psi(\vec x)$ and all possible proofs, knowing that we must eventually find the desired pair.

It is one of the central insights of proof theory that when we have a \emph{proof} that a $\Pi_2$ sentence is true, we should expect more: we should expect that a more perspicacious algorithm than a blind search is in fact hidden in our proof. Methods for actually extracting such algorithms, from actual proofs as written by mathematicians, is one of the central goals of proof mining \cite{kohlenbach:MR2445721}.

The formal methods of proof mining generally require that one begins with a proof of a $\Pi_2$ sentence in a reasonable theory, typically an extension of Peano arithmetic, perhaps with higher types. Embedding tests\footnote{There are many variations. See \cite{yin2007equivalence} for a comparison of several.} dealing with actual models---especially those, like the ones considered in this paper, which deal with uncountable \emph{saturated} models---fall outside the range of these methods. 

While it is likely possible to extend the methods of proof mining to cover the such models (functional interpretations, one of the main tools of proof mining, do exist for some constructive set theories, e.g. \cite{MR1778934}), in this paper we will instead show how we can reinterpret embedding tests in a computable way, and then will show how we can extract algorithms from this reinterpretation.

The author is grateful to Chris Miller for the suggestion that motivated this work.

\section{Embedding Tests and Proof Mining}

A prototypical example of an embedding test is the following. (This, or something close to it, can be found in any standard model theory textbook, e.g. \cite{Marker_2011}.)

\begin{theorem}\label{thm:qe}
  Let $T$ be a theory. Suppose that whenever $\mathcal{M}$ and $\mathcal{N}$ are models of $T$, $A\subseteq\mathcal{M}$, $f:A\rightarrow\mathcal{N}$ is an embedding, and $a\in|\mathcal{M}|\setminus A$, there is an $\mathcal{N}'\succ\mathcal{N}$ and an embedding $g:A\cup\{a\}\rightarrow\mathcal{N}'$ extending $f$.

  Then $T$ has quantifier elimination.
\end{theorem}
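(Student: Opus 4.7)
The plan is to convert the embedding test into a syntactic equivalence by a standard compactness argument. By induction on formula complexity, Boolean combinations of formulas equivalent to quantifier-free ones remain so, so it suffices to handle existential formulas $\phi(\vec x) \equiv \exists \vec y\, \theta(\vec x, \vec y)$ with $\theta$ quantifier-free. For such $\phi$, set $\Gamma(\vec x) := \{\psi(\vec x) : \psi \text{ is quantifier-free and } T \vdash \phi \to \psi\}$. The goal becomes $T \cup \Gamma(\vec c) \vdash \phi(\vec c)$ for fresh constants $\vec c$; compactness then produces a single quantifier-free $\psi$ with $T \vdash \phi \leftrightarrow \psi$, which is the quantifier elimination we want.

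To prove $T \cup \Gamma(\vec c) \vdash \phi(\vec c)$, fix any $\mathcal{M} \models T$ and $\vec a \in \mathcal{M}$ realizing $\Gamma$; I must show $\mathcal{M} \models \phi(\vec a)$. Consider the theory $T' := T \cup \mathrm{Diag}_{\mathrm{qf}}(\mathcal{M},\vec a) \cup \{\phi(\vec c)\}$, where the diagram pins down the quantifier-free type of $\vec a$. If $T'$ were inconsistent, compactness would yield a quantifier-free $\psi(\vec x)$ true of $\vec a$ with $T \vdash \phi \to \neg\psi$; but then $\neg\psi \in \Gamma$, contradicting $\mathcal{M} \models \Gamma(\vec a)$. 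So $T'$ has a model $\mathcal{N}$, and the witnesses for $\phi(\vec c^\mathcal{N})$ give $\vec d \in \mathcal{N}$ with $\mathcal{N} \models \theta(\vec c^\mathcal{N}, \vec d)$.

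Here is the crucial step: the map $f_0 : \vec c^\mathcal{N} \mapsto \vec a$ is an embedding of the substructure generated by $\vec c^\mathcal{N}$ inside $\mathcal{N}$ into $\mathcal{M}$, since by construction $\vec c^\mathcal{N}$ and $\vec a$ have the same quantifier-free type. I would now apply the hypothesis of Theorem \ref{thm:qe}, with the roles of $\mathcal{M}$ and $\mathcal{N}$ in the hypothesis played by $\mathcal{N}$ and $\mathcal{M}$ respectively here, iteratively adjoining the coordinates of $\vec d$ one at a time. After finitely many steps the embedding extends to some $g : \vec c^\mathcal{N} \cup \vec d \to \mathcal{M}'$ with $\mathcal{M} \preceq \mathcal{M}'$. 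Since $\theta$ is quantifier-free and hence preserved by embeddings, $\mathcal{M}' \models \theta(\vec a, g(\vec d))$, so $\mathcal{M}' \models \phi(\vec a)$, and elementarity yields $\mathcal{M} \models \phi(\vec a)$.

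The main obstacle is conceptual rather than technical: one has to orient the embeddings correctly, using the hypothesis in the direction from the auxiliary model $\mathcal{N}$ (where a witness to $\phi$ lives) back into an elementary extension of $\mathcal{M}$ (which is where the witness is needed). Everything else is routine bookkeeping, namely the compactness argument, the finite iteration of the one-element extension property, and the preservation of quantifier-free formulas under embeddings.
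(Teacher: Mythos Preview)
Your proof is correct and follows essentially the same approach as the paper's: define $\Gamma$ as the quantifier-free consequences of $\phi$, use compactness to reduce to showing $T\cup\Gamma\models\phi$, produce two models agreeing on the relevant quantifier-free type (one where $\phi$ holds), and apply the embedding hypothesis to transfer the witness. The only cosmetic differences are that the paper reduces to a single existential quantifier and argues by contradiction (building $\mathcal{N}\models\neg\phi$ first, then $\mathcal{M}\models\phi$, and embedding into an extension of $\mathcal{N}$), whereas you handle a block of existentials by iterating the one-point extension and argue directly, embedding from the auxiliary model into an elementary extension of $\mathcal{M}$.
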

\begin{proof}
  As usual, to show quantifier elimination it suffices to show that whenever $\phi(x,\vec y)$ is a quantifier-free formula with the displayed free variables, there is a quantifier-free formula $\psi(\vec y)$ so that $T\vdash \psi(\vec y)\leftrightarrow\exists x\,\phi(x,\vec y)$.

  So let $\phi(x,\vec y)$ be given. We work in an extension of the language of $T$ with some fresh constant symbols and consider $\phi(x,\vec d)$ where $\vec d$ are the fresh constant symbols. Let $\Gamma$ be the quantifier-free consequences of $T\cup\{\exists x\,\phi(x,\vec d)\}$. 

  If $T\cup\Gamma\cup\{\forall x\,\neg\phi(x,\vec d)\}$ is inconsistent then by compactness there is some finite $\Gamma_0\subseteq\Gamma$ so that $T\cup\Gamma_0$ implies $\exists x\,\phi(x,\vec d)$, and therefore $T$ proves that $\bigwedge\Gamma_0$ is equivalent to $\exists x\,\phi(x,\vec d)$.\footnote{There is a minor technical point here: we need to be sure that $\bigwedge\Gamma_0$ is actually a formula. When $\Gamma_0$ is a finite \emph{non-empty} set, there is no issue. However if $|\vec y|=0$ and the language has no constant symbols then $\Gamma$ could actually be empty. Some presentations of these results add extra hypotheses---for instance, that there is at least one constant symbol---to ensure that this does not happen. We prefer the convention from proof theory that the $0$-ary connective $\bot$ should be present in the definition of first-order logic, and therefore $\bigwedge\emptyset$ is the formula $\neg\bot$.}

  Otherwise, $T\cup\Gamma\cup\{\forall x\,\neg\phi(x,\vec d)\}$ is consistent, and therefore has a model $\mathcal{N}$. Let $\Sigma$ be the set of quantifier-free sentences true in $\mathcal{N}$. If $T\cup\Sigma\cup\{\exists x\,\phi(x,\vec d)\}$ is inconsistent then $T\cup\Sigma\vdash\forall x\,\neg\phi(x,\vec d)$.  By compactness, there is a finite $\Sigma_0\subseteq\Sigma$ so that $T\cup\Sigma_0\vdash\forall x\,\neg\phi(x,\vec d)$, and therefore $T\vdash\bigwedge\Sigma_0\rightarrow\forall x\,\neg\phi(x,\vec d)$. But this means that $T\vdash\exists x\,\phi(x,\vec d)\rightarrow\bigvee\neg\Sigma_0$, and therefore $\bigvee\neg\Sigma_0\in \Gamma$, contradicting the construction of $\mathcal{N}$.

  Therefore $T\cup\Sigma\cup\{\exists x\,\phi(x,\vec d)\}$ must be consistent. Then it has a model, $\mathcal{M}$. Since $\mathcal{M}$ and $\mathcal{N}$ satisfy the same quantifier-free sentences about the constants $\vec d$, the map from $\vec d^{\mathcal{M}}$ to $\vec d^{\mathcal{N}}$ is an embedding.

  Pick $a\in|\mathcal{M}|$ so that $\mathcal{M}\vDash\phi(a,\vec d)$.   The assumption gives us a $\mathcal{N}'\succ\mathcal{N}$ and a homomorphism $g$ mapping $\vec d$ to itself and $a$ to some element $g(a)$ so that $\mathcal{N}'\vDash \phi(a,\vec d)$. Then $\mathcal{N}'\vDash\exists x\,\phi(a,\vec d)$, contradicting $\mathcal{N}\prec\mathcal{N}'$.
\end{proof}

When $T$ is a computable (or computably enumerable) theory in a countable language, the statement that $T$ has quantifier elimination can be encoded in arithmetic: given any (code for) an existential formula, there is a (code for) a quantifier-free formula together with a deduction from $T$ of the equivalence.

We would like to rephrase this theorem so that the statement and proof can be encoded in arithmetic as well, so that it will be amenable to proof mining approaches to extracting an algorithm from it.

The main idea is to replace models with algorithms that output formulas. In place of a countable model, we could hope to have an algorithm which outputs a description of that model---that is, outputs formulas (with parameters) true in the model. For instance, at stage $n$, our algorithm could output a partial descriptions of the model, and then at stage $n+1$, output a larger partial description of the model, so that eventually every every sentence or its negation appears.

Since many interesting models have theories which are not computable, it's not reasonable to expect that our algorithm \emph{only} outputs sentences true in the model. Instead, we have to allow our algorithms to change their mind.

\begin{definition}
  An \emph{approximation of a model} in the language $\mathcal{L}$ is a function $h$ such that:
  \begin{itemize}
  \item for each $n$, $h(n)$ is a finite set of formulas in $\mathcal{L}_{\mathbb{N}}$ (the language $\mathcal{L}$ with fresh constant symbols for each element of $\mathbb{N}$),
  \item for each formula $\phi$, $\lim_n\chi_{h(n)}(\phi)$ exists---that is, $\phi$ is either present in cofinitely many of the sets $h(n)$ or in finitely many of them,
  \item the set $\lim h$ of sentences $\phi$ such that $\lim_n\chi_{h(n)}\phi=1$ has the following properties:
    \begin{itemize}
    \item if $\sigma\in T$ then $\sigma\in \lim h$,
    \item $\lim f$ is closed under consequences: if $\lim h\vdash\phi$ then $\phi\in \lim f$,
    \item for every $\phi$, either $\phi\in \lim h$ or $\neg\phi\in \lim h$,
    \item if $\exists x\,\phi(n)\in\lim h$ then there is some constant $n$ so that $\phi(n)\in\lim h$.
    \end{itemize}
  \end{itemize}

  We say $h$ is consistent if $\bot\not\in\lim h$.
\end{definition}
This is almost the same as saying that there is a model $\mathcal{M}$ of $T$ whose universe is a quotient of $\mathbb{N}$ and so that $\lim f$ is the true sentences in $\mathcal{M}$, except that we do not require that $\lim f$ be consistent.

Note that we can easily construct such approximations in a fairly uniform way---for instance, given an index for a c.e. set $W_e$ of formulas from $\mathcal{L}$, we can describe an approximation of a model $h_e$ of $T\cup W_e$ which, further, is consistent so long as $T\cup W_e$ is consistent. At stage $n$ we consider the first $n$ formulas in some computable enumeration and output those formulas which are in $T$ or which have already been enumerated into $W_e$ by this stage. For the remaining formulas, we consider them in order, search for $n$ steps for a proof that the formula contradicts formulas we have already added, and add the formula precisely if we do not find a proof of a contradiction. (Actually, this method would work just as well if we replaced $W_e$ by a suitable coding of a $\Delta_2$ set, which will be useful below.)

To ensure the witnessing property, we adapt the usual trick from Henkin's proof of the completeness theorem: for each formula $\phi$ we pick a witness $w$ and add the formula $\exists x\,\phi(x)\rightarrow\phi(w)$, so that closure under consequence ensures that $w$ will be the needed witness. To make sure we have enough witnesses, we need to fix infinitely many pairwise disjoint subsets of the natural numbers---say, the numbers divisible by $2^k$ but not $2^{k+1}$ for various values of $k$---to serve as witnesses. Formally, for each formula $\phi$, let $k_\phi\geq 1$ be least so that no constant symbol appearing in $\phi$ is divisible by $2^k$. We choose a computable injective map $w$ from those $\phi$ with $k_\phi=k$ to those numbers divisible by $2^k$ but not $2^{k+1}$ and we require that $f$ make the formulas $\exists x\,\phi(x)\rightarrow \phi(w(\phi))$ true. 


We can directly adapt the statement and proof of Theorem \ref{thm:qe} to this setting. An embedding of approximations of models $h,h'$ is just a function $f:\mathbb{N}\rightarrow\mathbb{N}$ so that whenever $\phi$ is a quantifier-free sentence, $\phi\in\lim h$ iff $f(\phi)\in\lim h'$ (where $f(\phi)$ means the result of replacing constants from $\mathbb{N}$ in $\phi$ according to $f$), and an embedding is elementary if this condition holds for all sentences, not just quantifier-free ones.

\begin{theorem}
  Let $T$ be a computably enumerable theory. Suppose that whenever $h$ and $h'$ are consistent approximations of models of $T$, $A\subseteq\mathbb{N}$, and $f:A\rightarrow\mathbb{N}$ is an embedding of $h\upharpoonright A$ into $h'$, and $a\in \mathbb{N}\setminus A$, there is an approximation of a consistent approximation of a model $h''$, an elementary embedding $f'$ from $h'$ into $h''$, and an embedding $g$ from $A\cup\{a\}$ into $h''$ extending $f'\circ f$.

  Then $T$ has quantifier elimination.
\end{theorem}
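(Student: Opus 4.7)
The plan is to transcribe the proof of Theorem~\ref{thm:qe} essentially step-for-step, replacing appeals to G\"odel's completeness theorem with appeals to the uniform construction of approximations of models sketched after the definition. The two uses of compactness in the original argument are really about derivability in first-order logic and carry over unchanged; what actually needs to be replaced is the step ``pick a model of the consistent theory,'' which now becomes ``pick a consistent approximation of a model of the c.e.\ theory.''

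In detail: fix a quantifier-free $\phi(x,\vec y)$, interpret the fresh constants $\vec d$ as distinct natural numbers $A=\{d_1,\dots,d_k\}\subseteq\mathbb{N}$, and let $\Gamma$ be the set of quantifier-free consequences (in constants from $A$) of $T\cup\{\exists x\,\phi(x,\vec d)\}$. If $T\cup\Gamma\cup\{\forall x\,\neg\phi(x,\vec d)\}$ is inconsistent, a finite $\Gamma_0\subseteq\Gamma$ provides the required quantifier-free equivalent of $\exists x\,\phi(x,\vec d)$, exactly as in the original. Otherwise I build a consistent approximation $h'$ of a model of this theory using the uniform construction, let $\Sigma$ be the quantifier-free sentences (in constants from $A$) lying in $\lim h'$, and verify by the identical proof-theoretic argument from Theorem~\ref{thm:qe} that $T\cup\Sigma\cup\{\exists x\,\phi(x,\vec d)\}$ is consistent: otherwise a finite $\Sigma_0\subseteq\Sigma$ would witness $T\vdash\exists x\,\phi(x,\vec d)\to\bigvee\neg\Sigma_0$, placing $\bigvee\neg\Sigma_0$ into $\Gamma$ and contradicting closure of $\lim h'$ under consequence. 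A second use of the uniform construction then produces a consistent approximation $h$ of a model of $T\cup\Sigma\cup\{\exists x\,\phi(x,\vec d)\}$.

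By the Henkin-style witnessing clause in the definition, some $a\in\mathbb{N}$ satisfies $\phi(a,\vec d)\in\lim h$, and the disjoint-witness discipline from the excerpt lets me arrange $a\notin A$. Since $\lim h$ and $\lim h'$ contain exactly the same quantifier-free sentences in constants from $A$, namely $\Sigma$, the identity on $A$ is an embedding of $h\upharpoonright A$ into $h'$. Applying the hypothesis of the theorem yields a consistent approximation $h''$, an elementary embedding $f':h'\to h''$, and an embedding $g:A\cup\{a\}\to h''$ extending $f'\upharpoonright A$. Because $g$ is an embedding and $\phi$ is quantifier-free, $\phi(g(a),f'(\vec d))\in\lim h''$, so $\exists x\,\phi(x,f'(\vec d))\in\lim h''$; but $\forall x\,\neg\phi(x,\vec d)\in\lim h'$ and $f'$ is elementary, so $\forall x\,\neg\phi(x,f'(\vec d))\in\lim h''$ as well, contradicting the consistency of $h''$.

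I do not expect a serious obstacle; the argument is structurally a direct transcription of Theorem~\ref{thm:qe}. The points requiring minor care are that the uniform construction really produces a consistent approximation when its input theory is consistent (asserted in the excerpt), that $A$ can be chosen from indices disjoint from those reserved as Henkin witnesses so that $a\notin A$ is automatic, and that both $\Gamma$ and $\Sigma$ are c.e.\ so the uniform construction applies---which follows from $T$ being c.e.
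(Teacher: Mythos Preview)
Your proposal is correct and follows essentially the same argument as the paper, with one minor slip and one structural difference worth noting.

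The slip: you claim that $\Sigma$ is c.e.\ so that the uniform construction applies. It is not: $\Sigma$ consists of the quantifier-free sentences in $\lim h'$, which is only $\Delta_2$ in general. The paper flags this explicitly and invokes the parenthetical remark (made right after the definition) that the uniform construction works just as well from a $\Delta_2$ description. This does not affect correctness of the theorem as stated, since approximations are not required to be computable, but your justification for why the uniform construction applies is wrong as written.

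The structural difference: you reproduce the case analysis of Theorem~\ref{thm:qe} verbatim, first disposing of the inconsistent case and only then building consistent $h'$ and $h$ before invoking the hypothesis. The paper instead builds $h'$ and $h$ unconditionally, applies the hypothesis (contrapositively: if both were consistent we would obtain a consistent $h''$, which is impossible), and concludes that one of $h,h'$ must be inconsistent; it then rules out $h$ and reads off the finite $\Gamma_0$ from the stage at which $h'$ outputs $\bot$. The two arguments are logically equivalent, but the paper's ordering is chosen to make the algorithmic content visible---the competition between $h$ and $h'$ described in the paragraph following the proof---whereas your ordering hides this behind a nonconstructive case split on consistency.
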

\begin{proof}
  The proof is basically unchanged. Given a quantifier-free formula $\phi(x,\vec y)$, we pick some constants $\vec d$ from $\mathbb{N}$. Let $\Gamma$ be the quantifier-free consequences of $T\cup\{\exists x\,\phi(x,\vec d)\}$ and let $h'$ be an approximation of a model of $T\cup\Gamma\cup\{\forall x\,\neg\phi(x,\vec d)\}$ of the kind described above---that is, we pick a collection of Henkin sentences $\Psi$ and then at the $n$-th stage, $h'$ outputs those elements of $T\cup\Gamma\cup\{\forall x\,\neg\phi(x,\vec d)\}\cup\Psi$ which have been enumerated by stage $n$ and then resolves the remaining formulas in order.

  If we let $\Sigma$ be the set of quantifier-free sentences with constants from $\vec d$ true in $h'$, we can now try to construct an approximation $h$ of a model of $T\cup\{\phi(a,\vec d)\}\cup\Psi\cup\Sigma$ where $a$ is some new constant from $\mathbb{N}$. $\Sigma$ is not computably enumerable, but it is $\Delta_2$, and similar ideas work: at the $n$-th stage, $h$ outputs those sentences from $T\cup\{\phi(a,\vec d)\}\cup\Psi$ which have been enumerated by the $n$-th stage together with exactly what $h'$ thinks $\Sigma$ is at stage $n$---that is, every quantifier-free sentence with constants from $\vec d$ in $h'(n)$ is also in $h(n)$. Then we may take $f$ to be the identity on $\vec d$, since we have ensured that $h$ and $h'$ agree on quantifier-free sentences with constants from $\vec d$.

  By assumption, this means there is an extension $h''$ of $h'$. But $h''$ cannot be consistent, since it has to contain both $\forall x\,\neg \phi(x,\vec d)$ and $\phi(g(a),g(\vec d))$. Therefore one of $h$ or $h'$ is inconsistent.

  If $h$ is inconsistent then $\bot\in\lim_n h$, and therefore $T\cup\{\phi(a,\vec d)\}\cup\Sigma$ must be inconsistent. But then there is a finite subset $\Sigma_0$ of $\Sigma$ which implies $\forall x\,\neg\phi(a,\vec d)$, and therefore $\bigvee\neg\Sigma_0$ is in $\Gamma$, which is a contradiction.

  Therefore $h'$ is inconsistent, so at some finite stage, we deduce a contradiction from $T\cup\Gamma\cup\{\forall x\,\neg\phi(x,\vec d)\}$, and therefore the finitely many elements of $\Gamma$ which have so far been outputted are equivalent to $\exists x\,\phi(x,\vec d)$.
\end{proof}

It is useful to imagine this proof as a competition between $h$, which is trying to justify $\phi(a,\vec d)$, and $h'$, which is trying to justify $\forall x\,\neg\phi(x,\vec d)$. The approximation $h'$ is ``trying'' to prevent $h$ from consistently concluding $\phi(a,\vec d)$, but every time it forces $h$ to output $\bot$, $h$ discovers a new element $\bigvee\Sigma_0$ in $\Gamma$, which forces $h'$ to adjust its output.

The argument above is merely a rephrasing of the usual embedding test, but can now be expressed in purely arithmetic terms, making it amenable to the usual methods of proof mining. The real content of any proof of quantifier elimination using such a test is in the proof that models satisfy the corresponding embedding property, which we can then reinterpret as statements about these approximations of models.

To illustrate this line of thinking, we briefly how we might apply it the proof of quantifier elimination for algebraically closed fields. (Algorithms for quantifier elimination in this case are well known\cite{MR0044472}, and indeed, the algorithm we obtain from proof mining is precisely the standard one.)

The embedding test argument goes roughly as follows: let $M, N$ be algebraically closed fields, let $A\subseteq M$, let $f:A\rightarrow N$ be an embedding in the language of rings, and let $a\in M\setminus A$. (Note that this means that $f$ is an embedding of the ring generated by $A$ into $N$.)

We consider three cases. First, if $a$ is in the field generated by $A$ then $a=b/c$ where $b,c$ are in the ring generated by $A$, so we must define $g(a)=f(b)/f(c)$. Second, if $a$ is not in the field generated by $A$ but is algebraic over this field then we take a minimal polynomial $p$ for $a$ and take $g(a)$ to be any root of the polynomial $f(p)$. Finally, if $a$ is not algebraic over the field generated by $A$ then $a$ is transcendental, so we take any transcendental element of an elementary extension of $N$.

To try to extract an algorithm from this argument, we could reason as follows. Let $\exists x\,\phi(x,\vec d)$ be the formula we are considering, with $\phi$ quantifier-free. $M$ and $N$ are replaced by our approximations $h$ and $h'$. $h'$ has to go first, attempting to build the algebraically closed field $N$, which it may as well take to be the algebraic closure of $\vec d$, since this is the smallest choice, and therefore minimizes the risk of being forced to make $\phi(x,\vec d)$ true for any $x$.

As $h'$ considers various elements of the algebraic closure, it always has to decide that $\neg\phi(x,\vec d)$ holds---that is, $h'$ keeps outputting sentences $p(b,\vec d)=0\wedge\neg\phi(b,\vec d)$ for various polynomials $p$ and elements $b$. Whatever consequences this has for quantifier-free statements only involving $\vec d$ go into $\Sigma$, and therefore $h$ has to output them as well. $h$ is attempting to satisfy $\exists x\,\phi(x,\vec d)$, and is forced into the transcendental case from the model theoretic argument---$h'$ controls what happens to algebraic elements, so the only way $h$ can try to make $\phi(a,\vec d)$ true is by taking $a$ to be transcendental.

The model theoretic argument tells us that all transcendental elements behave the same way---that when $a,a'$ are two elements transcendental over $\vec d$, $\phi(a,\vec d)$ is true exactly when $\phi(a',\vec d)$. Compactness says the same must be true when $a,a'$ are ``almost transcendental''---there must be a finite set of polynomials in $\vec d$ so that if $a,a'$ satisfy none of them, $a,a'$ are equivalent $\phi(a,\vec d)$ is true exactly when $\phi(a',\vec d)$.  In this case, these polynomials are easy to identify: it suffices to determine that $p(a,\vec d)\neq 0$ for every polynomial $p$ appearing in $\phi$.

So once $h'$ has declared that no root of any polynomial appearing in $\phi$ is allowed to satisfy $\phi(x,\vec d)$, one of two things happen. First, it could be that, after replacing every equality in $\phi$ with $\bot$, we get a true statement---that is, $\phi(x,\vec d)$ is satisfied by sufficiently transcendental elements. Then $h'$ is inconsistent, because the model being approximated by $h'$ contains sufficiently transcendental element, because algebraically closed fields are always infinite.

Otherwise, $h'$ has briefly scored a victory over $h$, having ruled out all possible solutions to $\phi(x,\vec d)$. This victory is in the form of a collection of quantifier-free formulas
\[\bigwedge_{1\leq i<j\leq d}b_i\neq b_j\wedge \bigwedge_{1\leq i\leq d}p(b_i,\vec d)\wedge\neg\phi(b_i,\vec d)\]
which $h'$ has output. The model theoretic argument tells us that there must be some quantifier-free formula in just $\vec d$ which expresses this. This is the most difficult (and interesting) step: the model theoretic argument invokes some standard facts from field theory (about the number of roots of a polynomial and the fact that all roots of the minimal polynomial satisfy the same formulas). In order to obtain a quantifier-elimination algorithm, we would have to work through the actual proofs of these facts to identify what the corresponding formula is; the methods of proof-mining give us a systematic way to do this. (As mentioned above, if we actually did this step, we would rediscover the usual algorithm.)

In either case, we have discovered new statements in $\Gamma$, which $h'$ is then forced to accept, forcing $h'$ to now go through finitely many cases involving the possibility that a root of one of these polynomials satisfies $\phi(x,\vec d)$, and the argument continues.

In algorithmic terms, we have replaced a split between the transcendental case and the algebraic case with a split between a ``sufficiently transcendental'' case and an ``algebraic with a small bound'' case. Being transcendental is essentially a $\Pi_1$ property in this context---it amounts to saying that there is an infinite list of polynomials which $a$ does not satisfy, so we have replaced a choice between a $\Pi_1$ case and a $\Sigma_1$ case with a more constructive choice between a \emph{bounded} $\Sigma_1$ case and its negation. This is a typical situation when proof mining: we replace non-effective case splits with effective approximations. (If the split were more complicated---say, between a $\Pi_2$ case and a $\Sigma_2$ case---the corresponding constructive approximation would also more complicated.)

In the remainder of this paper, we illustrate this idea by using the ideas described above to extract algorithms from the two quantifier elimination results from \cite{MR2235481}.

\section{Algebraically Closed Fields with Small Subgroups}

In \cite{MR2235481}, van den Dries and G\"unaydin consider the theories of algebraic and real closed fields with a distinguished small subgroup and, using saturated model arguments, give ``relative'' quantifier elimination results showing that all formulas are equivalent to Boolean combinations of formulas of a particular simple form. In this section and the next, we apply the ideas of the previous section to give explicit algorithms corresponding to each of these results.

In this section, we let $\mathcal{L}$ be the language of rings, $\{0,1,+,-,\cdot\}$. We are initially interested in the expanded language $\mathcal{L}(U)$, where $U$ is a unary predicate symbol which will pick out our distinguished subgroup. A $\mathcal{L}(U)$-structure has the form $(K,G)$ where $K$ is a field and $G\subseteq K$ is a group.

\begin{definition}
  $G$ is \emph{small} in $K$ if whenever $f:K^m\rightarrow K$ is definable, $f(G^m)\subsetneq K$.
\end{definition}
That is, no definable function of a power of $G$ is sufficient to ``cover'' all of $K$.

The theory $\mathrm{ACF}(\mathfrak{m})$ is the $\mathcal{L}(U)$-theory saying that $K^\times$ is an algebraically closed field and $G\subseteq K$ is a group which is small in $K$.

$\mathrm{ACF}(\mathfrak{m})$ is not complete, since it does not specify much about the group $G$. So what we can hope to show is that the theory of $(K,G)$ reduces to the theory of $G$ in some way. We cannot quite reduce to the theory of $G$ as a group, because the additive structure of $K$ can be see on $G$. To solve this, van den Dries and G\"unaydin consider the language $\mathcal{L}(U,\Sigma)$ which adds, for every $\vec k\in\mathbb{Z}^n$, a new $n$-ary relation symbol $\Sigma_{\vec k}$, whose intended interpretation is the set
\[\{(g_1,\ldots,g_n)\in G^n\mid k_1g_1+\cdots+k_ng_n=0\}.\]

Then $\ACFm$ is the theory which extends $\mathrm{ACF}(\mathfrak{m})$ by the axioms
\[\forall \vec y\ (\Sigma_k(\vec y)\leftrightarrow \bigwedge_{i\leq n}Uy_i\wedge k_1y_1+\cdots+k_ny_n=0).\]

$\mathcal{L}(U,\Sigma)$ contains a sublanguage $\mathcal{L}_{\mathfrak{m}}(\Sigma)=\{1,\cdot\}\cup\{\Sigma_{\vec k}\mid \vec k\in\mathbb{Z}^n\}$, the language of multiplicative monoids with additive relations. We think of $\mathcal{L}_{\mathfrak{m}}(\Sigma)$ as the ``natural'' language of $G$ for our purposes, and embed it into $\mathcal{L}(U,\Sigma)$ as follows.

\begin{definition}
  We abbreviate quantification bounded to $U$ by $\exists^U$ and $\forall^U$---that is, $\exists^U\vec y\,\phi$ abbreviates $\exists \vec y(\bigwedge_{i\leq |\vec y|}Uy_i\wedge\phi)$ and $\forall^U\vec y\,\phi$ abbreviates $\forall \vec y(\bigwedge_{i\leq |\vec y|}Uy_i\rightarrow\phi)$.
  
  A \emph{$U$-formula} is a formula in the language $\mathcal{L}(U,\Sigma)$ defined inductively by:
  \begin{itemize}
  \item atomic formulas in $\mathcal{L}_{\mathfrak{m}}(\Sigma)$ are $U$-formulas,
  \item if $\theta$ is a $U$-formula, so is $\neg\theta$,
  \item if $\theta,\theta'$ are $U$-formulas, so are $\theta\wedge\theta'$ and $\theta\vee\theta'$,
  \item if $\theta$ is a $U$-formula, so are $\exists^Ux\,\theta$ and $\forall^Ux\,\theta$.
  \end{itemize}

  A \emph{$KG$-formula} is a formula of the form 
  \[\exists^U\vec y(\theta(\vec y)\wedge\psi(\vec x,\vec y))\]
  where $\theta$ is a $U$-formula and $\psi$ is a quantifier-free $\mathcal{L}$-formula.
\end{definition}
Since $\mathcal{L}$-formulas in $\mathrm{ACF}$ admit quantifier elimination, we may of course assume that the $\psi$ in a KG-formula is quantifier-free. 

In this setting, van den Dries and G\"unaydin show the following.
\begin{theorem}[\cite{MR2235481}[Theorem 3.8]]
  Each $\mathcal{L}(U,\Sigma)$-formula $\phi(\vec x)$ is equivalent in $\ACFm$ to a Boolean combination of KG-formulas.
\end{theorem}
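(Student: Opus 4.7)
The plan is to prove the theorem via the approximation-of-models embedding test from the previous section, adapted to target the class of Boolean combinations of KG-formulas rather than quantifier-free formulas. Concretely, the variant we need says: if every partial map between finite fragments of approximations of $\ACFm$-models that preserves the truth value of every KG-formula can be extended by one element (possibly after passing to an elementary extension of the target approximation), then every $\mathcal{L}(U,\Sigma)$-formula is $\ACFm$-equivalent to a Boolean combination of KG-formulas. The proof is structurally identical to the quantifier-elimination version, with $\Gamma$ redefined as the set of Boolean combinations of KG-formulas in the constants $\vec d$ that are consequences of $T \cup \{\exists x\,\phi(x,\vec d)\}$, and $\Sigma$ likewise the set of Boolean combinations of KG-sentences forced by $h'$. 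The only added subtlety is that $\Sigma$ is now $\Delta_2$ rather than computably enumerable, which was already accommodated in the approximation framework.

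The technical content is then verifying this KG-embedding test for $\ACFm$, following van den Dries--G\"unaydin's case analysis. Let $f$ be a KG-preserving partial map between approximations of $(K,G)$ and $(K',G')$, and let $a$ be the new element to incorporate. The argument splits on whether $a \in G$ or $a \notin G$. When $a \notin G$, the extension essentially reduces to the pure ACF analysis sketched in the previous section: if $a$ is algebraic over $\mathbb{Q}(A)$, choose a conjugate on the $K'$-side (which lies outside $G'$ because smallness forces the algebraic closure of $G'$ to be a proper subset of $K'$); if $a$ is transcendental, smallness guarantees a transcendental element outside the algebraic closure of $G' \cup \mathbb{Q}(f(A))$, either in $K'$ itself or in the elementary extension provided by $h''$. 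Since any $\vec y$ appearing in a KG-formula is $U$-bounded and ranges over $G'$ in the target, KG-preservation reduces cleanly to the ACF analysis in this case.

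The case $a \in G$ is the main obstacle. Here $f(a)$ must be an element of $G'$ simultaneously realizing the quantifier-free $\mathcal{L}_{\mathfrak{m}}(\Sigma)$-type of $a$ over $f(A \cap G)$ and the quantifier-free $\mathcal{L}$-type of $a$ over the image of the subring generated by $A$. The key point is that these two types are compatible because of smallness: the multiplicative monoid and additive $\Sigma_{\vec k}$-relations defining the group-theoretic type can be realized ``generically'' with respect to the ambient field, and smallness guarantees that generic realizations avoid unintended field-theoretic equations. Passing to $h''$ supplies the saturation needed to realize the combined type. To extract an algorithm in the style of the previous section, the non-effective split between ``$a$ is generic over $f(A \cap G) \cup \mathbb{Q}(f(A))$'' and ``$a$ satisfies a specific relation'' would be replaced by a bounded search, where in each round $h$ commits to finitely many tentative relations for $a$, and each refutation by $h'$ produces a new Boolean combination of KG-formulas to append to the running equivalent of $\exists x\,\phi(x,\vec d)$.
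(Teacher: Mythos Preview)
Your approach is genuinely different from the paper's, and in fact closer to the original van den Dries--G\"unaydin argument than to what this paper actually does in Section~3.

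The paper does \emph{not} run the approximation-of-models embedding test for $\ACFm$. Instead it builds an explicit syntactic apparatus: it introduces \emph{split} and \emph{controlled} formulas (a controlled formula is a finite case-split $\bigoplus_j \eta_j^{\pm}\rightarrow\psi_j$ where the $\eta_j^\pm$ form a partition and the $\psi_j$ are rigid split formulas), and proves by induction on formulas that every $\mathcal{L}(U,\Sigma)$-formula is $\ACFm$-equivalent to a controlled formula. The core lemma shows that a polynomial equality $p(\vec x,z;\vec u)=0$ with $\vec u$ from $U$ can be rewritten as a controlled formula in which $z$ appears only in the case-selectors $\eta^\pm_j$, not in the $\psi_j$; this is the algebraic-disjointness step made explicit via spanning-set bookkeeping in a tower of field extensions. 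This is iterated to strip all non-$U$ parameters. Separate lemmas then handle $\exists^U u$ (reducing to a $U$-formula once all field parameters are gone) and $\exists z$ (splitting into ``$z$ is a root, over $U$-parameters, of some polynomial appearing in $\phi$'' versus ``$z$ is generic for all such polynomials'', the latter disposed of by smallness). Unpacking a controlled formula and distributing quantifiers then yields a Boolean combination of KG-formulas. That is the paper's proof, and it is an explicit algorithm throughout.

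Your sketch, by contrast, restates the embedding test in the approximation language and defers the extraction. As an outline of the model-theoretic argument it has the right shape, but there are gaps. In the case $a\notin G$ algebraic over $A$, your parenthetical ``which lies outside $G'$ because smallness forces the algebraic closure of $G'$ to be a proper subset of $K'$'' is not the right justification: smallness does not tell you a \emph{particular} conjugate avoids $G'$. More seriously, the $a\in G$ case is where all the content lives, and ``these two types are compatible because of smallness\ldots passing to $h''$ supplies the saturation'' is exactly the step the paper's controlled-formula lemmas exist to unwind; your final paragraph about bounded search and refutations is a description of the desired output rather than an argument producing it. In short, you have rephrased van den Dries--G\"unaydin in the Section~2 framework without yet doing the proof-mining that Section~3 is meant to supply.
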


In this section, we give an explicit algorithm for this. For intermediate steps we need a related but more complicated kind of formula. The main technical step, which our formalism is essentially devised to accommodate, is the following.

Imagine we are working in a model $(K,G)$ of $\ACFm$, and we consider a polynomial $p(\vec x,\vec u)$ where the $\vec x$ are arbitrary parameters from $K$. This polynomial defines a subset of $G$, $\{\vec u\in G^{|\vec u|}\mid p(\vec x,\vec u)=0\}$. We wish to find a formula $\rho$ which defines the same subset of $G$ without using the parameters $\vec x$. To make this possible, we will allow $\rho$ to make reference to some new parameters, but these new parameters must belong to $G$.

For instance, when $|\vec u|=1$, there are at most as many roots as the degree of $p$ in $u$, so we can simply list them:
\[\{u\in G\mid p(\vec x,u)=0\}=\{u\in G\mid u=v_1\vee u=v_2\vee\cdots \vee u=v_k\}\]
for some $k$ and some choice of $v_1,\ldots,v_k$.

Note that the formula $u=v_1\vee u=v_2\vee\cdots \vee u=v_k$ is not quite uniform in $\vec x$---for different values of $\vec x$, the number of solutions in $G$ might be different. (This can be addressed in the simple example by allowing the $v_i$ to duplicate, but in general the non-uniformity is more complicated.) We can express this non-uniformity in a fairly simple way by making a case distinction based on $\vec x$ which can be expressed by a formula involving $\vec x$. Say $p$ has degree $m$ in $u$, so $k\leq m$. Then:
\begin{itemize}
\item suppose there are $m$ distinct roots of $p(\vec x,y)=0$ in $G$; then for any list $v_1,\ldots,v_m$ of distinct roots of $p(\vec x,y)=0$ in $G$, $u$ is a root if it is equal to some $v_i$,
\item suppose not, but suppose but there are $m-1$ distinct roots of $p(\vec x,y)=0$ in $G$; then for any list $v_1,\ldots,v_{m-1}$ of distinct roots of $p(\vec x,y)=0$ in $G$, $u$ is a root if it is equal to some $v_i$, 
\item suppose not \ldots,
\item suppose there are not $2$ distinct roots, but there is one $1$ root of $p(\vec x,y)=0$ in $G$; then for root $v_1$ of $p(\vec x,y)=0$ in $G$, $u$ is a root if it is equal to $v_1$,
\item if there is not even one root of $p(\vec x,y)=0$ in $G$, then $u$ is never a root of $p(\vec x,y)=0$.
\end{itemize}

That is:
\begin{itemize}
\item there is a finite list of possible cases,
\item the cases are determined by formulas of the form $\exists^U \vec v\, \psi(\vec x,\vec v)$,
\item if the formula $\psi(x,\vec v)$ determines which case we are in, there is a formula $\rho(\vec v,u)$ so that \emph{any} choice of $\vec v$ in $U$ satisfying $\psi(\vec x,\vec v)$ has the property that $\rho(\vec v,u)$ is equivalent to $p(x,u)=0$.
\end{itemize}
In the simple case, the formulas $\psi$ and $\rho$ were in $\mathcal{L}$; in general, we need slightly more complicated formulas. Moreover, we will need to iterate this process, so we need to deal with the case where $\vec x$ is itself a mix of parameters from $K$ and parameters which have already been restricted to come from $G$.

Since we will be dealing with these case splits frequently, it will be helpful to have a notation for them.
\begin{definition}
A formula $\phi(\vec x;\vec y)$ with the given division of variables is \emph{split} if it is a Boolean combination of $U$-formulas whose only variables are $\vec y$ and  $\mathcal{L}$-formulas which may contain all the variables.
  
Let $J$ be a finite set and let $\{\eta^+_j(\vec x;\vec u,\vec y_j)\}_{j\in J}$, $\{\eta^-_j(\vec x;\vec u,\vec y'_j)\}_{j\in J}$ be split formulas. We say these are a \emph{partition of cases} if $\ACFm$ proves that for every $\vec x,\vec u$ there is exactly one $j$ so that $\exists^U\vec y_j\,\eta^+_j(\vec x;\vec u,\vec y_j)\wedge\forall^U\,\vec y'_j\eta^-_j(\vec x;\vec u,\vec y'_j)$. We will often write $\vec y_j$ instead of $\vec y'_j$ for the variables in $\eta^-_j$, since they never appear together with the variables in $\vec y_j$, but note that there is no assumption that these two lists have the same length.

  When $\{\psi_j(\vec x;\vec u,\vec y_j)\}_{j\in J}$ is a collection of formulas, we say it is \emph{rigid} under a partition of cases $\{\eta^+_j(\vec x;\vec u,\vec y_j)\}_{j\in J}$, $\{\eta^-_j(\vec x;\vec u,\vec y_j)\}_{j\in J}$ if, for each $j$, $\ACFm$ proves
  \[\forall\vec x\left[\left(\exists^U\vec y_j\,\eta^+_j(\vec x;\vec u,\vec y_j)\wedge\forall^U\vec y'_j\,\eta^-_j(\vec x;\vec u,\vec y'_j)\right)\rightarrow\forall^U\vec y_j\vec y'_j\,\left(\psi_j(\vec x;\vec u,\vec y_j)\leftrightarrow\psi_j(\vec x;\vec u,\vec y'_j)\right)\right].\]
  That is, whenever $\vec x,\vec u$ belongs in the case determined by $j$, whether $\psi_j$ holds is independent of the choice of witnesses $\vec y_j$.

   When $\{\eta^+_j(\vec x;\vec u,\vec y_j)\}_{j\in J}$, $\{\eta^-_j(\vec x;\vec u,\vec y'_j)\}_{j\in J}$ is a partition of cases and $\{\psi_j(\vec x;\vec u,\vec y_j)\}_{j\in J}$ is a rigid collection of formulas, we write
   \[\bigoplus_{j\in J}\eta^{\pm}_j(\vec x;\vec u,\vec y_j)\rightarrow\psi_j(\vec x;\vec u,\vec y_j)\]
   to abbreviate the formula
    \[\bigvee_{j\in J}\exists^U\vec y_j\,\eta^+_j(\vec x;\vec u,\vec y_j)\wedge\forall^U\vec y'_j\,\eta^-_j(\vec x;\vec u,\vec y'_j)\wedge\psi_j(\vec x;\vec u,\vec y_j).\]

   A \emph{controlled formula} is a formula of the form 
   \[\bigoplus_{j\in J}\eta^{\pm}_j(\vec x;\vec u,\vec y_j)\rightarrow\psi_j(\vec x;\vec u,\vec y_j)\]
   where each each $\psi_j(\vec x;\vec u,\vec y_j)$ is a split formula.

  \end{definition}
  

  The negation of a controlled formula can be expressed as a controlled formula: $\neg\bigoplus_{j\in J}\eta_j^{\pm}(\vec x;\vec u,\vec y_j)\rightarrow\psi_j(\vec x;\vec u,\vec y_j)$ is equivalent to $\bigoplus_{j\in J}\eta_j^{\pm}(\vec x;\vec u,\vec y_j)\rightarrow\neg\psi_j(\vec x;\vec u,\vec y_j)$. (This is why we need to be sure that each $\vec x$ can belong to only one case.)

  A conjunction of controlled formulas is also equivalent to a controlled formula:
  \[(\bigoplus_{j\in J_0}\eta_j^{\pm}(\vec x;\vec u,\vec y_j)\rightarrow\psi_j(\vec x;\vec u,\vec y_j))\wedge(\bigoplus_{j\in J_1}\theta_j^{\pm}(\vec x;\vec u,\vec y'_j)\rightarrow\nu_j(\vec x;\vec u,\vec y'_j))\]
  is equivalent to
  \[\bigoplus_{(j_0,j_1)\in J_0\times J_1}\eta_{j_0}^{\pm}(\vec x;\vec u,\vec y_j)\wedge\theta_{j_1}^{\pm}(\vec x;\vec u,\vec y'_j)\rightarrow\psi_j(\vec x;\vec u,\vec y_j)\wedge\nu_j(\vec x;\vec u,\vec y'_j).\]
  Of course it follows that the disjunction of two controlled formulas is equivalent to a controlled formulas as well.



Our first goal is to show that any definable subset of $G$ is already definable using parameters from $G$. The choice of definition is not uniform in the parameters, but we will show that the non-uniformity can be described by a controlled formula. The main step is dealing with the case where the definition is given by a single polynomial, and the main step in that is showing that we can replace a single parameter from the larger model with parameters from $G$, which is the content of the following lemma.
 
\begin{lemma}\label{thm:split_equality}
  Let $p(\vec x,z;\vec u)$ be a polynomial. Then $\ACFm$ proves that for any $\vec x,z$ and any $\vec u$ satisfying $U$, $p(\vec x,z;\vec u)$ is equivalent to a controlled formula of the form
\[\bigoplus_{j\in J}\eta^{\pm}_j(\vec x,z;\vec y_j)\rightarrow\psi_j(\vec x;\vec u,\vec y_j).\]
\end{lemma}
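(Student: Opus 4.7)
The plan is to induct on $d = \deg_z p$, the degree of $p$ in the distinguished variable $z$. For the base case $d = 0$, the polynomial $p$ does not involve $z$, so the equation $p(\vec x; \vec u) = 0$ is already an $\mathcal{L}$-formula in the allowed variables and hence a split formula; the required controlled formula is the single-case version with trivial $\eta^\pm$, no auxiliary $\vec y$, and $\psi$ the formula $p(\vec x; \vec u) = 0$.

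For the inductive step $d > 0$, I would expand $p = \sum_{i=0}^{d} c_i(\vec x; \vec u)\, z^i$ with $c_i \in \mathbb{Z}[\vec x, \vec u]$, and build the controlled formula by case-splitting on $(\vec x, z)$ with auxiliary witnesses $\vec w \in G$. The core dichotomy is: either there exist $\vec w \in G$ and a polynomial $m(\vec x, T; \vec w) \in \mathbb{Z}[\vec x, T, \vec w]$ of a fixed normalized form (monic of degree $d$ in $T$, with finitely many possible shapes determined by the monomial support of $p$) such that $m(\vec x, z; \vec w) = 0$; or no such witness exists. In the algebraic case, the relation $z^d = -\sum_{i<d} m_i(\vec x; \vec w)\, z^i$ can be substituted into $p$ to obtain a polynomial $\tilde p(\vec x, z; \vec u, \vec w)$ with $\deg_z \tilde p < d$. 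Applying the induction hypothesis to $\tilde p$, now treating $\vec w$ as part of the $U$-variables $(\vec u, \vec w)$, yields a controlled formula for $\tilde p = 0$; conjoining this with the case condition $m(\vec x, z; \vec w) = 0$ produces the sub-cases of the algebraic branch. In the generic case, the failure of every such normalized degree-$d$ relation forces the specialization $P_{\vec x; \vec u}(T) = \sum_i c_i(\vec x; \vec u)\, T^i$ to vanish identically whenever $z$ is a root, so $p(\vec x, z; \vec u) = 0$ reduces to the split formula $\bigwedge_i c_i(\vec x; \vec u) = 0$. The Boolean closure properties of controlled formulas noted just before the lemma then assemble the branches into a single controlled formula.

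The main obstacle will be ensuring the case split is finite, rigid, and exhaustive. Finiteness of the algebraic branch is handled by restricting the shapes of $m$ to a finite list determined by $p$. Exhaustiveness of the generic branch uses the algebraic closure of $K$: if $z$ satisfies no normalized polynomial of degree $d$ over $\mathbb{Q}(\vec x, \vec w)$, then it cannot be a root of any nonzero polynomial of degree at most $d$ over that field, and in particular not of any nonzero $P_{\vec x; \vec u}$. The most delicate point is rigidity---that the choice of witness $\vec w$ for a given case not affect whether $\psi$ holds. I expect this to require tying $\vec w$ down tightly enough (for instance, requiring $m$ to be in effect the minimal polynomial of $z$ over the subfield generated by $\vec x$ and $\vec w$) that the reduced polynomial $\tilde p$, and hence its controlled formula under the induction hypothesis, is uniquely determined up to provable equivalence.
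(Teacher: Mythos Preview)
Your core dichotomy does not close. You require $m\in\mathbb{Z}[\vec x,T,\vec w]$ monic of degree $d$ in $T$, drawn from ``finitely many possible shapes determined by the monomial support of $p$,'' and then claim that in the generic branch $z$ can be a root of $P_{\vec x;\vec u}(T)=\sum_i c_i(\vec x;\vec u)T^i$ only if every $c_i$ vanishes. But if some $c_e(\vec x;\vec u)\neq 0$, the monic relation for $z$ that this yields has coefficients $c_i/c_e$, which lie in $\mathbb{Q}(\vec x,\vec u)$ and are not elements of $\mathbb{Z}[\vec x,\vec w]$ of any fixed shape; clearing denominators destroys monicity. So the generic branch does not force $\bigwedge_i c_i=0$. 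The same difficulty undermines the finiteness claim: there is no a~priori bound on the degrees in $\vec x,\vec w$, or on the length of $\vec w$, needed to write down a monic relation for $z$ over the field generated by $\vec x$ and $G$, so no finite list of integer-polynomial shapes can be exhaustive. The rigidity issue you flag is real but secondary; the dichotomy already fails before you reach it.

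The paper takes a different route that sidesteps these problems. The induction is on $|\vec u|$, not on $\deg_z p$, with a side induction on the number of nonzero coefficients $p_i(\vec x,\vec u)$ of $p$ written as a polynomial in $z$. The case split asks only whether some $\vec y\in G^{|\vec u|}$ satisfies $p(\vec x,z;\vec y)=0$ together with $p_{i_0}(\vec x;\vec y)\neq 0$ for a fixed coefficient $p_{i_0}$. If not, that coefficient can be dropped and the side induction applies. If so, the paper does not attempt to reduce $p$ modulo a monic relation. Instead it peels off the last $U$-variable, writing $\vec u=(\vec u_0,v)$, uses $p(\vec x,z;\vec y)=0$ to regard $z$ as algebraic of bounded degree over the field generated by $\vec x,\vec y,\vec u_0$, and runs an explicit spanning-set elimination in the family $\{v^iz^j\}$ to produce a polynomial satisfied by $v$ whose coefficients involve only $\vec x,\vec y,\vec u_0$ and not $z$. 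The main induction then handles $\vec u_0$. Because the only witness is a single tuple $\vec y$ of length $|\vec u|$ satisfying $p$ itself, finiteness of the case split is automatic, and the remaining case distinctions are all governed by vanishing or nonvanishing of explicit polynomials in $\vec x,\vec u_0,\vec y$.
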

Note that which variables appear in which formulas is crucial here. The formulas $\eta^{\pm}_j$ depend on $\vec x,z$, but not on $\vec u$, while $\psi_j$ depends on $\vec x,\vec u$, but not on $z$. That means the algebraic set $\{\vec u\in U\mid p(\vec x,z,\vec u)=0\}$ defined using the extra parameter $z$ can instead be defined using parameters $\vec y_j$ from $U$.

Roughly speaking, we should think of this lemma as follows: the parameters $\vec x,z$ come from some small model $(K',G')$ contained in the large model $(K,G)$. This lemma amounts to showing the algebraic disjointness of $K'$ from the field generated by $G$ over the field generated by $\vec x\cup G''$.
\begin{proof}
  We proceed by main induction on $|\vec u|$; when $|\vec u|=0$, take $J=\{1,2\}$, $\eta^+_1$ to be $p(\vec x,z)=0$ and $\psi_1(\vec x)$ to be $\top$, and take $\eta^+_2$ to be $p(\vec x,z)\neq 0$ and $\psi_2(\vec x)$ to be $\bot$.

  Suppose $|\vec u|>0$ and write $p(\vec x,z,\vec u)=\sum_{i\leq t}p_i(\vec x,\vec u)z^i$. We proceed by side induction on the number of coefficients $i\leq t$ so that $p_i$ is not the constant term $0$. If there are no such coefficients then we may take $J=\{1\}$, $\eta^+_1$ to be $\top$, and $\psi_1$ to be $\top$.

  Otherwise, let $i_0\leq t$ be least such that $p_{i_0}$ is not the constant term $0$. We will split into two cases, essentially based on whether there are any solutions where $p_{i_0}(\vec x,\vec u)\neq 0$. We will ultimately take $J=J_0\cup J_1$ where, for every $j\in J_0$, $\eta^+_j(\vec x,\vec z,\vec y_j)$ will imply that, for some $\vec y'_j\subseteq\vec y_j$, $p(\vec x,z,\vec y'_j)= 0\wedge p_{i_0}(\vec x,\vec y'_j)\neq 0$, while for every $j\in J_1$, $\eta^-_j$ will imply $\forall^U\vec u\, (p(\vec x,z,\vec u)=0\rightarrow p_{i_0}(\vec x,\vec u)=0)$.

  To find $J_1$ and the corresponding formulas, we apply the side inductive hypothesis to $\sum_{i\leq t, i\neq i_0}p_i(\vec x,\vec u)z^i$, so this is equivalent to some controlled formula
  \[\oplus_{j\in {J_1}}\theta^{\pm}(\vec x,z;\vec y_j)\rightarrow\psi_j(\vec x;\vec u,\vec y_j).\]
For $j\in J_1$, we take $\eta^+_{j_1}$ to be $\theta^+$ and $\eta^-_{j_1}(\vec x,z;\vec y_j,\vec y'_j)$ to be $\theta^-_{j_1}(\vec x,z;\vec y_j)\wedge (p(\vec x,z;\vec y'_j)=0\rightarrow p_{i_0}(\vec x;\vec y'_j))$.

  It remains to handle the case where $\exists^U\vec y\,(p(\vec x,z,\vec y)=0\wedge p_{i_0}(\vec x,\vec y)\neq 0)$.

  Since $|\vec u|\geq 1$, let us separate out the last element of $\vec u$, so $\vec u=\vec u_0,v$ and view $p$ as a polynomial in $v$, $p(\vec x,z,\vec u_0,v)=\sum_{s\leq t'}p^*_s(\vec x,z,\vec u_0)v^s$. We apply the main inductive hypothesis to each $p^*_s$, obtaining sets $J_s$ and corresponding formulas $\eta^\pm_{s,j}$ and $\psi_{s,j}$. We will take $J_0=\prod_sJ_s$, $\eta^+_{\{j_s\}}$ to be $\bigwedge_j\eta^{\pm}_{s,j_s}(\vec x,z;\vec y_{j_s})\wedge p(\vec x,z;\vec y)=0\wedge p_{i_0}(\vec x;\vec y)\neq 0$. We take $\eta^-_{\{j_s\}}$ to be $\bigwedge_j\eta^-_{s,j_s}$. It remains to describe the formula $\psi_{\{j_s\}}$.

For the remainder of the argument, we assume we have fixed some choice of $\{j_s\}$ and some choice of the corresponding witnesses $\vec y_{j_1},\ldots,\vec y_{j_s}$ and, in particular, some fixed choice of $\vec y$ so that $p(\vec x,z;\vec y)=0$ while $p_{i_0}(\vec x;\vec y)\neq 0$. 

So fix some choice of $\{j_s\}$ and some choice of the corresponding witnesses $\vec y_{1,j_1},\ldots,\vec y_{s,j_s},\vec y$.  Let $p'(\vec x,z,\vec y)=p(\vec x,z,\vec y)/z^{i_0}$. Informally, the idea will be as follows. Suppose we begin with a field containing $\vec x,\vec y,\vec u_0$. Adding $z$, as a root of $p'(\vec x,z,\vec y)$, gives a field extension of finite degree, and adding $v$ as a root of $p(\vec x,z,\vec u_0,v)$ is a further finite field extension.

Standard arguments about fields show that if we instead added $v$ followed by $z$, we would get an extension of the same degree; in particular, it follows that $v$ must satisfy a polynomial in $\vec x,\vec y,\vec u_0$. The remainder of our algorithm consists of working through that argument to identify the polynomial satisfied by $v$.

When $z$ and $v$ are added as roots like this, $\{v^iz^j\}_{0\leq i<t',0\leq j<t}$ forms a spanning set of size $tt'$ for the field generated by adding $v$ and $z$. (The choice of $tt'$ as the upper bound may be too high to be a basis, but we only need that this is a spanning set.) In particular, since there exists a spanning set of size $tt'$, the $tt'+1$ elements $1,v,v^2,\ldots,v^{tt'}$ cannot be linearly independent (over the field already containing $\vec x,\vec y,\vec u_0$), so we will be able to show that $v$ satisfies a polynomial in $\vec x,\vec y,\vec u_0$ of degree at most $tt'$.

First, we split into cases based on the formulas $\psi_{s,j_s}(\vec x,\vec u_0)$---that is, our formula $\psi_{\{j_s\}}(\vec x,\vec u_0,v)$ will have the form
\[\bigvee_{S\subseteq[0,t']}\bigwedge_{s\in S}\psi_{s,j_s}(\vec x,\vec u_0)\wedge\bigwedge_{s\not\in S}\neg\psi_{s,j_s}(\vec x,\vec u_0)\wedge \psi'_{s,S}(\vec x,\vec u_0,v).\]
So let us suppose we have chosen some set $S\subseteq[0,t']$, determining which $\psi_{s,j_s}$ are true, and therefore which $p^*_s(\vec x,z,\vec u_0)$ are non-zero.

Since $p'(\vec x,z,\vec y)=0$, for each $s$ so that $p^*_s(\vec x,z,\vec u_0)$ is non-zero, there is a term $\frac{p^+_s(\vec x,z,\vec u_0,\vec y)}{p^-_s(\vec x,\vec u_0,\vec y)}$ so that
\[p^*_s(\vec x,z,\vec u_0) \frac{p^+_s(\vec x,z,\vec u_0,\vec y)}{p^-_s(\vec x,\vec u_0,\vec y)}=1.\]

Let $S_0=\{v^iz^j\}_{0\leq i<t,0\leq j<t}$. We will proceed inductively: at each stage $n$, we will have a spanning set $S_n\subseteq S_0\cup\{v^i\}_{0\leq i<tt'}$ of size $tt'$ and, for each element $s$ of $S_0\setminus S_n$, we will witness that $S_n$ is still a spanning set by keeping track of a ratio of linear combinations
\[s=\sum_{w\in S_n}\frac{q_{n,w}(\vec x,\vec y,\vec u_0)}{q'_{n,w}(\vec x,\vec y,\vec u_0)}w\]
where the denominator is non-zero.

Suppose we have constructed $S_n$. Let $i\leq tt'$ be least so that $v^i\not\in S_n$. Consider the polynomial $v^{i'}p(\vec x,z,\vec u_0,v)$ where $i'$ is chosen so that $v^i$ appears but no higher power of $v$ does. (Note that whether $p^*_s(\vec x,z,\vec u_0)=0$ is determined by $\psi_{s,j_s}(\vec x,\vec u_0,\vec y_s)$, so what $i'$ is does not depend on $z$.)

Viewing $v^{i'}p(\vec x,z,\vec u_0,v)$ as a linear combination of terms from $S_0$ with coefficients in $\vec x,\vec u_0$, we isolate all terms containing $v^i$:
\[v^ip^*_{i-i'}(\vec x,z,\vec u_0)=\sum_{w\in S_0}q_w(\vec x,\vec u_0)w.\]
Multiplying both sides by $\frac{1}{p^*_{i-i'}(\vec x,z,\vec u_0)}=\frac{p^+_{i-i'}(\vec x,z,\vec u_0,\vec y)}{p_{i-i'}(\vec x,\vec u_0,\vec y)}$, we obtain
\[v^i=\sum_{w\in S_0}\frac{q'_w(\vec x,\vec u_0,\vec y)}{r'_w(\vec x,\vec u_0,\vec y)}w.\]
Since each element of $S_0$ is a linear combination of elements from $S_n$, we can further rewrite this as
\[v^i=\sum_{w\in S_n}\frac{q''_w(\vec x,\vec u_0,\vec y)}{r''_w(\vec x,\vec u_0,\vec y)}w.\]
If every $w$ such that $q''_w(\vec x,\vec u_0,\vec y)\neq 0$ is a power of $v$ then, after multiplying through by the denominators, we have obtained a polynomial in $v$. 

Otherwise, there is some $w\in S_n$ of the form $v^{i'}z^j$ whose coefficient is non-zero; therefore we can isolate this $w$ to obtain an expression for $w$ in terms of $S_n\setminus\{w\}\cup\{v^{i}\}$, so we may take $S_{n+1}=S_n\setminus\{w\}\cup\{v^{i}\}$.

Note that if $n=tt'$, we cannot be in the second case: by the time $n=tt'$, we have encountered the polynomial we are looking for. Our formula $\psi_{s,S}$, then, is a further large disjunction over whether various polynomials in $\vec x,\vec u_0,\vec y$ are zero, with each disjunct concluding that $p(\vec x,z,\vec u_0,v)=0$ exactly when the final polynomial $\prod_w r''_w(\vec x,\vec u_0,\vec y)v^i=\sum_{w\in S_n}q''_w(\vec x,\vec u_0,\vec y)w$ obtained in that disjunct holds.
\end{proof}

\begin{cor}\label{thm:replace_one_var}
  Let $\phi(\vec x,z;\vec u)$ be an $\mathcal{L}$-formula. Then $\ACFm$ proves that $\phi$ is equivalent to a controlled formula
 \[\bigoplus_{j\in J}\eta^{\pm}_j(\vec x,z;\vec y_j)\rightarrow\psi_j(\vec x;\vec u,\vec y_j).\]

\end{cor}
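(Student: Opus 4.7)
The plan is to reduce $\phi$ to a Boolean combination of polynomial equations via quantifier elimination for algebraically closed fields, convert each atomic equation to a controlled formula by invoking Lemma \ref{thm:split_equality}, and finally reassemble the pieces using the Boolean closure properties of controlled formulas established in the discussion immediately following the definition.

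First, I would apply quantifier elimination for $\ACF$ in the language $\mathcal{L}$ to replace $\phi(\vec x, z; \vec u)$ with a provably equivalent quantifier-free $\mathcal{L}$-formula $\phi'(\vec x, z; \vec u)$. Since the atomic $\mathcal{L}$-formulas are term equations $t_1 = t_2$, each rewritable as $t_1 - t_2 = 0$, the formula $\phi'$ is a Boolean combination of atomic formulas of the form $p(\vec x, z; \vec u) = 0$ for various polynomials $p$.

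Next, I would apply Lemma \ref{thm:split_equality} to each such atomic subformula $p(\vec x, z; \vec u) = 0$, obtaining for each one an equivalent controlled formula
\[\bigoplus_{j\in J_p}\eta^{\pm}_{p,j}(\vec x, z; \vec y_j)\rightarrow\psi_{p,j}(\vec x; \vec u, \vec y_j)\]
with the required separation: the case indicators $\eta^{\pm}_{p,j}$ depend only on $\vec x, z$ (and the auxiliary witnesses $\vec y_j \in U$), while the conclusions $\psi_{p,j}$ depend only on $\vec x, \vec u$ (and $\vec y_j$). I would then use induction on the Boolean structure of $\phi'$, invoking the closure of controlled formulas under negation and conjunction (and hence disjunction) recorded in the excerpt. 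Both closure constructions manifestly preserve the variable separation: negation only flips $\psi_{p,j}$ to $\neg\psi_{p,j}$ and leaves the $\eta^{\pm}_{p,j}$ alone, while the conjunction takes a product partition whose $\eta^{\pm}$-component is a conjunction of formulas in $\vec x, z$ and whose $\psi$-component is a conjunction of formulas in $\vec x, \vec u$. The output is a single controlled formula of the desired shape.

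I do not expect a serious obstacle here: the substantive work is packaged into Lemma \ref{thm:split_equality}, and the corollary is essentially just its functorial extension from atomic polynomial equations to arbitrary $\mathcal{L}$-formulas. The only point worth verifying carefully is that the Boolean closure operations respect the specific slot assignments (i.e.\ $z$ in the $\eta^{\pm}$-slot, $\vec u$ in the $\psi$-slot), but this is transparent from the explicit product-partition formula written out in the excerpt, so no extra argument is required.
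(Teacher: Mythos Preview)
Your proposal is correct and follows essentially the same approach as the paper: reduce to a quantifier-free Boolean combination of polynomial equalities via quantifier elimination for \ACF, apply Lemma~\ref{thm:split_equality} to each atomic formula, and then use the Boolean closure properties of controlled formulas to recombine. The paper states this more tersely and does not spell out the variable-separation check you mention, but that check is indeed straightforward for the reasons you give.
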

\begin{proof}
  Since $\phi$ is a $\mathcal{L}$-formula and ACF has quantifier elimination, this follows by replacing $\phi$ with a quantifier-free formula in which all atomic formulas have the form form $p=0$ for some $t$, applying Lemma \ref{thm:split_equality} to the resulting polynomials, and then combining the formulas given by the lemma.
\end{proof}

\begin{cor}
  Let $\phi(\vec x, z;\vec u)$ be a split formula. Then $\ACFm$ proves that $\phi$ is equivalent to a controlled formula
  \[\bigoplus_{j\in J}\eta^{\pm}_j(\vec x, z;\vec y_j)\rightarrow\psi_j(\vec x;\vec u,\vec y_j).\]
\end{cor}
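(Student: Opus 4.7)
The plan is to reduce to Corollary \ref{thm:replace_one_var} by handling each atomic piece of $\phi$ separately and then combining via the Boolean closure of controlled formulas. By definition, $\phi(\vec x, z; \vec u)$ is a Boolean combination of two kinds of atoms: $U$-formulas $\theta(\vec u)$ whose free variables lie in $\vec u$ alone, and $\mathcal{L}$-formulas $\alpha(\vec x, z, \vec u)$ that may involve all of the variables.

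First I would observe that a $U$-formula atom $\theta(\vec u)$ is already a controlled formula of the required shape: take $J$ to be a singleton, $\eta^{\pm} \equiv \top$ with empty witness tuple, and payoff $\psi(\vec x; \vec u) := \theta(\vec u)$. This is a legal controlled formula because a $U$-formula is a split formula by definition, and the payoff simply happens not to depend on $\vec x$ or $z$. Next I would apply Corollary \ref{thm:replace_one_var} to each $\mathcal{L}$-atom $\alpha(\vec x, z, \vec u)$ to obtain an equivalent controlled formula $\bigoplus_{j \in J_\alpha} \eta^{\pm}_{\alpha,j}(\vec x, z; \vec y_j) \rightarrow \psi_{\alpha,j}(\vec x; \vec u, \vec y_j)$, whose payoffs are split by construction.

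Finally I would combine these atomic controlled formulas according to the Boolean structure of $\phi$, using the explicit closure constructions for negation, conjunction, and disjunction recorded immediately after the definition of controlled formula. Those constructions operate purely on the partition-of-cases data and the payoffs, without ever inspecting which particular variables sit on which side of the control/payoff partition; they therefore apply equally well to our partition with $(\vec x, z)$ on the control side and $(\vec x, \vec u)$ on the payoff side. Iterated application according to the Boolean skeleton of $\phi$ produces the single controlled formula required.

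There is no substantive obstacle beyond bookkeeping. The one point to verify at each stage is that the payoff $\psi_j$ remains a split formula: this follows because the two atomic cases both deliver split payoffs and because the class of split formulas is closed under Boolean operations by definition, so the conjunction/disjunction/negation steps in the closure constructions preserve the split property.
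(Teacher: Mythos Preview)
Your proposal is correct and follows essentially the same approach as the paper: apply Corollary~\ref{thm:replace_one_var} to each $\mathcal{L}$-formula constituent, leave the $U$-formula constituents alone (since $z$ does not appear in them), and then combine everything using the Boolean closure properties of controlled formulas. The paper's proof is a two-sentence summary of exactly this; your version merely spells out the bookkeeping (in particular the preservation of split-ness under Boolean combination) that the paper leaves implicit.
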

\begin{proof}
  Apply the previous corollary to each $\mathcal{L}$-formula in $\phi$. Since $z$ does not appear in the $U$-formulas, replacing each $\mathcal{L}$-formula with an equivalent controlled formula and then combining the resulting controlled formulas into a single controlled formula gives the desired formula.
\end{proof}

Finally we extend to the case where $z$ is replaced by a tuple.

\begin{lemma}\label{thm:replace_var}
  Let $\phi(\vec x,\vec z;\vec u)$ be a split formula. Then $\ACFm$ proves that $\phi$ is equivalent to a controlled formula
  \[\bigoplus_{j\in J}\eta^{\pm}_j(\vec x,\vec z;\vec y_j)\rightarrow\psi_j(\vec x;\vec u,\vec y_j).\]

\end{lemma}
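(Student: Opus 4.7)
I would induct on $|\vec z|$. The base case $|\vec z|=0$ is immediate (take $J=\{\ast\}$, $\eta_\ast^\pm=\top$, and $\psi_\ast=\phi$), and the case $|\vec z|=1$ is precisely the previous corollary.

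For the inductive step, suppose the lemma holds for tuples of length $k-1$ and let $\vec z=\vec z',z_k$ have length $k$. First I would apply the previous corollary, treating $(\vec x,\vec z')$ as the preserved parameters and $z_k$ as the single extra variable, to obtain
\[\phi \;\equiv\; \bigoplus_{j\in J}\eta^\pm_j(\vec x,\vec z',z_k;\vec y_j)\to\psi_j(\vec x,\vec z';\vec u,\vec y_j),\]
with each $\psi_j$ split in the displayed variables. Then I would apply the inductive hypothesis to each $\psi_j$, viewing $\vec z'$ as the tuple to eliminate, $\vec x$ as the preserved free-field parameters, and $(\vec u,\vec y_j)$ as the $U$-role variables, obtaining
\[\psi_j \;\equiv\; \bigoplus_{j'\in J_j}\mu^\pm_{j,j'}(\vec x,\vec z';\vec y'_{j,j'})\to\chi_{j,j'}(\vec x;\vec u,\vec y_j,\vec y'_{j,j'}).\]

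Next I would substitute these back into the outer controlled formula and combine using the same substitution/conjunction pattern given after the definition of controlled formulas. Unfolding the $\bigoplus$'s and pushing the inner disjunction over $j'$ past the outer conjunctions is legal: $\mu^\pm_{j,j'}$ does not mention the outer witnesses $\vec y_j$ or $\vec u$, and the quantified witness variables are fresh and disjoint, so the existentials (resp.\ universals) can be grouped. This produces
\[\phi \;\equiv\; \bigoplus_{(j,j')}\bigl(\eta_j^\pm(\vec x,\vec z;\vec y_j)\wedge\mu^\pm_{j,j'}(\vec x,\vec z';\vec y'_{j,j'})\bigr)\to\chi_{j,j'}(\vec x;\vec u,\vec y_j,\vec y'_{j,j'}),\]
which exhibits the required variable pattern: the combined $\eta\wedge\mu$'s are split in $(\vec x,\vec z;\vec y_j,\vec y'_{j,j'})$ with no $\vec u$, while the $\chi_{j,j'}$'s are split in $(\vec x;\vec u,\vec y_j,\vec y'_{j,j'})$ with no $\vec z$.

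The remaining checks are formal bookkeeping: (i) the $(j,j')$-family is a genuine partition of cases because the outer partition uniquely assigns $j$ to each $(\vec x,\vec z)$ and then the inner partition (depending only on $(\vec x,\vec z')$) uniquely assigns $j'$, independent of $\vec u$ and of the choice of outer witness $\vec y_j$; and (ii) $\chi_{j,j'}$ is rigid in the combined witnesses $(\vec y_j,\vec y'_{j,j'})$ under case $(j,j')$, which follows by composing outer rigidity of $\psi_j$ in $\vec y_j$ with inner rigidity of $\chi_{j,j'}$ in $\vec y'_{j,j'}$. The main obstacle is keeping this variable-dependency bookkeeping clean; no new algebraic content is needed beyond what was already proved in Lemma~\ref{thm:split_equality} and its corollary.
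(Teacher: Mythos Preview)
Your argument is correct and follows the same inductive scaffold as the paper (induct on $|\vec z|$; strip one variable with the corollary; apply the inductive hypothesis to each $\psi_j$). The difference is in how the two controlled formulas are fused. The paper writes the inner case formulas as $\eta^{\pm}_{j,j'}(\vec x,\vec z;\vec y_j,\vec y_{j'})$, allowing them to depend on the outer witnesses $\vec y_j$, and then points out that the naive product $\eta^\pm_j\wedge\eta^\pm_{j,j'}$ need not be a partition of cases, since different admissible $\vec y_j$ could land in different inner cases $j'$; it repairs this with an elaborate ``first possible case'' construction over an enlarged index set $K_j$ of pairs $(m,j')$. You sidestep the whole issue by applying the inductive hypothesis with the role assignment $\text{``}\vec u\text{''}=(\vec u,\vec y_j)$, so that the inner $\mu^\pm_{j,j'}(\vec x,\vec z';\vec y'_{j,j'})$ genuinely do not mention $\vec y_j$; the inner case $j'$ is then determined by $(\vec x,\vec z')$ alone, the product $\{(j,j')\}$ is automatically a partition, and rigidity of $\chi_{j,j'}$ in $(\vec y_j,\vec y'_{j,j'})$ follows by chaining outer rigidity of $\psi_j$ through the inner equivalence exactly as you sketch in (ii). So your route is a legitimate simplification; the paper's machinery becomes necessary only if one lets the inner case split see $\vec y_j$, which your variable bookkeeping avoids.
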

\begin{proof}
  By induction on $|\vec z|$. When $|\vec z|=0$, the statement is trivial since we may take $\psi_0$ to be $\phi$.

  If $|\vec z|=n+1$, we may write $\vec z=\vec z_0,z$. Applying the previous corollary gives formulas so that $\ACFm$ proves $\phi(\vec x,\vec z_0,z;\vec u)$ is equivalent to
  \[\bigoplus_{j\in J}\eta^{\pm}_j(\vec x,\vec z;\vec y_j)\rightarrow\psi_j(\vec x,\vec z_0;\vec u,\vec y_j).\]

We then apply the inductive hypothesis to each $\psi_j$, obtaining controlled formulas
\[\bigoplus_{j'\in J_j}\eta^{\pm}_{j,j'}(\vec x,\vec z;\vec y_j,\vec y_{j'})\rightarrow\psi_{j,j'}(\vec x,\vec z_0;\vec u,\vec y_j,\vec y_{j'}).\]
We could try to combine these by setting $\theta^{\pm}_{j,j'}(\vec x,\vec z;\vec y_j,\vec y_{j'})$ to be $\eta^{pm}_j(\vec x,\vec z;\vec y_j)\wedge\eta^{\pm}_{j,j'}(\vec x,\vec z;\vec y_j,\vec y_{j'})$, but this does not quite give a partition of cases: for each $\vec x,\vec z,\vec y_j$, there's a unique case $j'$, it could be that different choices of $\vec y_j$ lead to different choices of $j'$.

This forces us to take a more complicated approach, where our partition of cases insists on a choice of witnesses $\vec y_j$ so that the ``first possible case'' is chosen at the next step. We need to define the first possible case a little carefully to deal with the restricted form in which we state cases.

Assume that $\vec x,\vec z$ belongs to some case $j$---that is, $\exists^U\vec y_j\,\eta^+(\vec x,\vec z;\vec y_j)\wedge\forall^U\vec y_j\,\eta^-(\vec x,\vec z;\vec y_j)$ holds.  For each choice of witnesses $\vec y_j$ to $\eta^+$, there is exactly one case $j'(\vec y_j)\in J_j$ that holds, and all others fail. That means that, for all $j''\neq j'$, at least one of $\forall^U\vec y'_{j''}\,\eta^+_{j,j''}(\vec x,\vec z;\vec y_j,\vec y_{j''})$ and $\exists^U\vec y'_{j''}\,\eta^-_{j,j''}(\vec x,\vec z;\vec y_j,\vec y_{j''})$ holds. In order to name a unique case, we need to pay attention not only to which value of $j'$ we need, but to which of the two ways the other cases are ruled out.

We will let $K_j$ be the set of all pairs $(m,j')$ where $j'\in J_{j}$ and $m\leq|J_{j}|-1$. We order these pairs lexicographically. We will now define $\theta^{\pm}_{j,m,j'}$ to state that the case $j$ holds for $\vec x,\vec z$, and that among all possible $\vec y_j$ witnessing this, $(m,j')$ is least such that the case $j'$ holds of $\vec x,\vec z,\vec y_j$ with $|J_{j}|-1-m$ of the values of $j''\in J_{j}\setminus\{j'\}$ ruled out by finding witnesses to $\eta^-$. For each $\vec x,\vec z$, there is a unique choice of $j$ and least $(m,j')$ so this occurs, so it suffices to show that we can express this with a pair of formulas of the specified kind.

We take $\theta^+_{j,m,j'}(\vec x,\vec z;\vec y_j,\{\vec y_{j''}\}_{j''\in J_j})$ to state:
\begin{itemize}
\item $\eta^+_j(\vec x,\vec z;\vec y_j)$ holds,
\item $\eta^+_{j,j'}(\vec x,\vec z;\vec y_j,\vec y_{j'})$ holds,
\item there is a set of $j''\in J_j\setminus\{j'\}$ of size $m$ for which $\eta^-_{j,j''}(\vec x,\vec z;\vec y_j,\vec y_{j''})$ holds,
\end{itemize}
and $\theta^-_{j,m,j'}(\vec x,\vec z;\vec y_j,\{\vec y_{j''}\}_{j''\in J_j})$ to state:
\begin{itemize}
\item $\eta^-_j(\vec x,\vec z;\vec y_j)$ holds,
\item for all $(m^*,j^*)<(m,j')$, $\theta^+_{j,m^*,j^*}(\vec x,\vec z;\vec y_j,\{\vec y_{j''}\}_{j''\in J_j})$ fails.
\end{itemize}

Suppose $\exists^U \vec y_j,\{\vec y_{j''}\}_{j''\in J_j}\, \theta^+_{j,m,j'}(\vec x,\vec z;\vec y_j,\{\vec y_{j''}\}_{j''\in J_j})$ and
\[\forall^U \vec y_j,\vec y'_j,\{\vec y_{j''}\}_{j''\in J_j}\, \theta^-_{j,m,j'}(\vec x,\vec z;\vec y_j,\vec y'_j,\{\vec y_{j''}\}_{j''\in J_j})\]
both hold. Then certainly there is some $\vec y_j$ witnessing that $\vec x,\vec z$ are in case $j$, and certainly $(m,j')$ is $\leq$ the first case at which $\vec x,\vec z,\vec y_j$ can be witnessed. Moreover, if $\forall^U\,\vec y_{j'}\,\eta^-(\vec x,\vec z;\vec y_j,\vec y_{j'})$ fails for any witnessing $\vec y_j$, since $j'$ is not the correct case for $\vec x,\vec z,\vec y_j$, some other case $j''$ holds, and therefore $\theta^+_{j,m-1,j''}$ would hold for some witnesses, contradicting $\theta^-_{j,m,j'}$.

At last, we can take $\psi_{j,m,j'}$ to be $\psi_j\wedge\psi_{j,j'}$.
\end{proof}




The next few lemmata show that we can eliminate a single existential quantifier. They correspond to the steps in the model theoretic proof, which considers several cases: first we consider the case where we can find the witness in $U$, then the case where we can find the witness in the algebraic closure, and finally the case where the witness is a transcendental element not in $U$.

\begin{lemma}\label{thm:controlled_U}
  Let $\phi(\vec z,u)$ be a formula which is equivalent to a controlled formula. Then there is a controlled formula equivalent to $\exists^Uu\,\phi(\vec z,u)$.



\end{lemma}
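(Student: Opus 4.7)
The plan is to start by writing $\phi(\vec z, u) \equiv \bigoplus_{j\in J} \eta^{\pm}_j(\vec z; u, \vec y_j) \to \psi_j(\vec z; u, \vec y_j)$, unfolding the abbreviation, and distributing $\exists^U u$ through the $\bigvee_j$.  Since $\eta^-_j$ does not depend on $\vec y_j$, this rewrites $\exists^U u\,\phi(\vec z,u)$ as
\[
\bigvee_{j\in J} \exists^U u,\vec y_j\,\bigl[\eta^+_j(\vec z; u, \vec y_j)\wedge \psi_j(\vec z; u, \vec y_j)\wedge \forall^U \vec y'_j\,\eta^-_j(\vec z; u, \vec y'_j)\bigr].
\]
Because controlled formulas are closed under finite disjunction (a consequence of the closure under conjunction and negation already established in the text), it suffices to show that each individual disjunct is equivalent to a controlled formula in $\vec z$.

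For a fixed $j$, I would construct a controlled formula whose existential witnesses include $(u,\vec y_j)$ absorbed together and whose existential split piece is $\eta^+_j(\vec z; u, \vec y_j)\wedge \psi_j(\vec z; u, \vec y_j)$.  The main obstacle is the residual universal constraint $\forall^U \vec y'_j\,\eta^-_j(\vec z; u, \vec y'_j)$: it depends on the now-existential variable $u$, whereas in a controlled formula the universal component $\eta^{-,\mathrm{new}}$ must be a split formula in the free and universal variables only, with no dependence on existential witnesses.

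To overcome this obstacle, my plan is to refine the partition on $\vec z$ (rather than take the naive two-case partition $\{+,-\}$) by enumerating the combinatorial patterns of how $(\vec z,u)$ can fail to lie in each of the other cases $j'\neq j$ of the original partition.  For each such $j'$, the failure is of one of two types: a universal certification $\forall^U\vec y_{j'}\,\neg\eta^+_{j'}(\vec z;u,\vec y_{j'})$, or an existential certification in the form of a witness $\vec y''_{j'}$ to $\neg\eta^-_{j'}(\vec z;u,\vec y''_{j'})$.  Each combinatorial choice of failure-types, together with a lexicographic-ordering trick analogous to the $(m,j')$-construction in the proof of Lemma~\ref{thm:replace_var}, yields a refined partition of $\vec z$; within each refined case the existential witnesses accumulate $(u,\vec y_j)$ together with the existential-type certifications, while the universal witnesses accumulate the universal-type certifications.

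The hard part will be (i) accommodating the dependence of the surviving pieces $\eta^-_j$ and $\eta^+_{j'}$ on $u$, which must be dealt with by keeping every clause that mentions $u$ on the existential side while ensuring that the universal side consists of split formulas in the universal witnesses alone, and (ii) verifying that the resulting $\eta^{\pm,\mathrm{new}}$ are genuinely split and that the refined indexing gives a well-defined partition on $\vec z$.  Once the combinatorial bookkeeping is set up, equivalence with $\exists^U u\,\phi(\vec z,u)$ follows by unwinding the partition definitions and observing that the lexicographically minimal refined case pins down precisely one pattern for each $\vec z$.
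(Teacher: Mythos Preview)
Your plan has a genuine gap at exactly the point you flag as ``hard part (i)''. After distributing $\exists^U u$, each disjunct has the shape
\[
\exists^U u,\vec y_j\,\bigl[\eta^+_j(\vec z;u,\vec y_j)\wedge\psi_j(\vec z;u,\vec y_j)\wedge\forall^U\vec y'_j\,\eta^-_j(\vec z;u,\vec y'_j)\bigr],
\]
which is genuinely $\Sigma^U_2$: the inner $\forall^U$ is trapped under $\exists^U u$ because $\eta^-_j$ mentions $u$. A controlled-formula case, however, must have the form $\exists^U(\cdots)\wedge\forall^U(\cdots)$ with the two quantifier blocks ranging over \emph{disjoint} witness tuples; the universal block cannot refer to an existential witness. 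Your refinement by failure-types for the other cases $j'\neq j$ does not escape this: the ``universal-type'' certification $\forall^U\vec y_{j'}\,\neg\eta^+_{j'}(\vec z;u,\vec y_{j'})$ still mentions $u$, so it cannot go into $\eta^{-,\mathrm{new}}$, and putting it into $\eta^{+,\mathrm{new}}$ destroys the split-formula requirement. No lexicographic reindexing lowers this quantifier complexity; the obstruction is structural, not combinatorial.

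The paper resolves this not by rearranging quantifiers but by invoking Lemma~\ref{thm:replace_var} (which rests on the algebraic content of Lemma~\ref{thm:split_equality}). Applying that lemma to each $\psi_j(\vec z;u,\vec y_j)$, with $\vec z$ in the role of the variables to be eliminated and $(u,\vec y_j)$ in the role of the $U$-variables, yields a refined controlled formula whose partition pieces $\eta^{\pm}_{j,j'}(\vec z;\vec y_{j,j'})$ do \emph{not} mention $u$, while the new $\psi_{j,j'}(;u,\vec y_{j,j'})$ contain no $\vec z$-variables at all. Now $\exists^U u$ commutes past the case split, and $\exists^U u\,\psi_{j,j'}(;u,\vec y_{j,j'})$ is---after rewriting each equality $\sum_i t_i=0$ as the corresponding $\Sigma$-predicate---a $U$-formula, hence a split formula. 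The lexicographic bookkeeping you invoke from Lemma~\ref{thm:replace_var} does appear, but only to merge the two layers of cases \emph{after} the algebraic step has already removed the dependence on $u$ from the partition.
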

\begin{proof}
  Let $\bigoplus_{j\in J_0}\eta^{\pm}_j(\vec z;\vec y_j)\rightarrow\psi'_j(\vec z;u,\vec y_j)$ be a controlled formula equivalent to $\phi(\vec z,u)$. We apply Lemma \ref{thm:replace_var} to each $\psi'_j(\vec z;u,\vec y_j)$ and combine the resulting controlled formulas in the same way as in the proof of that lemma, obtaining an equivalent controlled formula $\bigoplus_{j'\in J_j}\eta^{\pm'}_{j,j'}(\vec z;\vec y_{j,j'})\rightarrow\psi'_{j,j'}(;u,\vec y_{j,j'})$ (with no variables in the left half of the split).

   Since the choice of case does not depend on $u$, it follows that $\exists^Uu\,\phi(\vec z;u)$ is equivalent to
   \[\bigoplus_{j'\in J_j}\eta^{\pm'}_{j,j'}(\vec z;\vec y_{j,j'})\rightarrow\exists^Uu\,\psi'_{j,j'}(;u,\vec y_{j,j'}).\]
Since the formula $\psi'_{j,j'}$ contains only variables from $U$, $\exists^Uu\,\psi'_{j,j'}(;u,\vec y_{j,j'})$ becomes a $U$-formula after replacing all atomic formulas of the form $\sum_{i\leq k}t_i=0$ with a corresponding formula $\Sigma(t_1,\ldots,t_k)$.
\end{proof}

The next lemma considers the case where $z$ is algebraic over the other variables.

\begin{lemma}\label{thm:controlled_root}
Suppose $\phi(\vec x,z)$ is equivalent to a controlled formula and let $\sum_{i\leq d}p_i(\vec x)z^i$ be a polynomial. Then $\exists z\,(\bigvee_{i\leq d}p_i(\vec x)\neq 0)\wedge (\sum_{i\leq d}p_i(\vec x)z^i=0)\wedge\phi(\vec x,z)$ is equivalent to a controlled formula.
\end{lemma}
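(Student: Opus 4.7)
The plan is to exploit the polynomial constraint $p(\vec x, z) = 0$ (together with $\bigvee p_i \neq 0$) to confine $z$ to one of finitely many algebraic values over $\vec x$, and then eliminate $\exists z$ using quantifier elimination in $\mathrm{ACF}$ together with the machinery of Lemma~\ref{thm:replace_var} and Lemma~\ref{thm:controlled_U}. First I would apply Lemma~\ref{thm:replace_var} to each split $\psi_j$ in the controlled form of $\phi$, and combine the resulting partitions as in the proof of that lemma, to obtain an equivalent controlled form $\phi(\vec x, z) \equiv \bigoplus_k \hat\eta_k^{\pm}(\vec x, z; \vec y_k) \to \hat\psi_k(\vec x; \vec y_k)$ in which $z$ appears only in the case indicators. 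Rigidity then lets us pull $\hat\psi_k$ past $\exists z$ and commute $\exists z$ past $\exists^U \vec y_k$, rewriting the target as $\bigvee_k \exists^U \vec y_k \bigl[\hat\psi_k(\vec x; \vec y_k) \wedge E_k(\vec x; \vec y_k)\bigr]$, where $E_k(\vec x; \vec y_k) := \exists z \bigl[p = 0 \wedge \bigvee_i p_i \neq 0 \wedge \hat\eta_k^+(\vec x, z; \vec y_k) \wedge \forall^U \vec y'_k\, \hat\eta_k^-(\vec x, z; \vec y'_k)\bigr]$.

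Next I would express each $E_k$ as a controlled formula. Applying Lemma~\ref{thm:replace_var} to the split formula $p = 0 \wedge \bigvee p_i \neq 0 \wedge \hat\eta_k^+$ gives $\bigoplus_m \alpha_m^{\pm}(\vec x, z; \vec v_m) \to \beta_m(\vec x; \vec y_k, \vec v_m)$, with $z$ only in the case indicators. Applying Lemma~\ref{thm:replace_var} to $\hat\eta_k^-$ similarly yields $\bigoplus_n \gamma_n^{\pm}(\vec x, z; \vec w_n) \to \delta_n(\vec x; \vec y'_k, \vec w_n)$; and since each $\gamma_n^{\pm}$ depends only on $(\vec x, z)$, the outer $\forall^U \vec y'_k$ passes through the partition and acts only on each $\delta_n$, giving $\forall^U \vec y'_k\, \hat\eta_k^- \equiv \bigoplus_n \gamma_n^{\pm} \to \forall^U \vec y'_k\, \delta_n$. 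Conjoining these controlled formulas (using closure under conjunction) produces a single controlled formula $\bigoplus_\ell \mu_\ell^{\pm}(\vec x, z; \vec u_\ell) \to \nu_\ell(\vec x; \vec y_k, \vec u_\ell)$ equivalent to the body of $E_k$, with $z$ appearing only in the case indicators $\mu_\ell^{\pm}$.

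Unfolding $E_k$ and commuting $\exists z$ with the $U$-quantifiers and the $z$-free $\nu_\ell$ then reduces matters to showing that each $\exists z\,\bigl(\mu_\ell^+ \wedge \forall^U \vec u'_\ell\, \mu_\ell^-\bigr)$ is equivalent to a split formula in $(\vec x; \vec u_\ell)$. Because the constraint $p(\vec x, z) = 0$ has been absorbed into the $\alpha_m^{\pm}$, and hence into the $\mu_\ell^{\pm}$, any $z$ making $\mu_\ell^+$ hold is one of at most $\deg p$ roots of $p$ over $(\vec x; \vec u_\ell)$; the existence of such a root meeting the polynomial conditions in $\mu_\ell^{\pm}$ is therefore a quantifier-free $\mathcal{L}$-formula in $(\vec x; \vec u_\ell)$ obtained via resultants and quantifier elimination in $\mathrm{ACF}$. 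Folding $\hat\psi_k$ and the outer $\exists^U \vec y_k$ back in using Lemma~\ref{thm:controlled_U} then yields a controlled formula equivalent to the original $\exists z$.

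The main obstacle is rewriting $\forall^U \vec y'_k\, \delta_n(\vec x; \vec y'_k, \vec w_n)$ as a split formula in $(\vec x; \vec w_n)$. When the $\mathcal{L}$-atoms of $\delta_n$ mix $\vec y'_k$ with $\vec x$ and $\vec w_n$, a universally quantified polynomial identity $\forall^U \vec y'_k\, q(\vec x, \vec y'_k, \vec w_n) = 0$ is equivalent, provided $G^{|\vec y'_k|}$ is Zariski-dense in $K^{|\vec y'_k|}$ (which holds whenever $G$ is infinite), to the vanishing of every coefficient of $q$ regarded as a polynomial in $\vec y'_k$---a conjunction of $\mathcal{L}$-atomic conditions on $(\vec x, \vec w_n)$. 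When $G$ is too small for density, one handles the universal via an exhaustive analysis using the $\Sigma_{\vec k}$ predicates to record the additive structure of $G$. Since $\ACFm$ does not pin down $|G|$, the bookkeeping required to treat both cases uniformly---alongside the combining of successive partitions of cases---is where the proof requires the most care.
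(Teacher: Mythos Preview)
Your approach diverges from the paper's and runs into a genuine obstacle that you partially identify but do not resolve.

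The core problem is the expression $\exists z\,\bigl(\mu_\ell^+(\vec x, z; \vec u_\ell) \wedge \forall^U \vec u'_\ell\, \mu_\ell^-(\vec x, z; \vec u'_\ell)\bigr)$. You assert this reduces to a quantifier-free $\mathcal{L}$-formula via ACF quantifier elimination, but $\mu_\ell^-$ is a \emph{split} formula whose $\mathcal{L}$-atoms mix $z$ with the universally $U$-quantified $\vec u'_\ell$. Even after confining $z$ to the finitely many roots of $p$, the inner $\forall^U$ persists, and ACF quantifier elimination does not apply to formulas containing it. Your Zariski-density remark addresses only the special shape $\forall^U \vec y'_k\, q=0$ for a single polynomial equation; here $\mu_\ell^-$ is a Boolean combination of $\mathcal{L}$-atoms and $U$-formulas, and the same difficulty recurs at this step, not only at the ``main obstacle'' you flag concerning $\forall^U\vec y'_k\,\delta_n$. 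The case analysis you sketch for finite $G$ via the $\Sigma$-predicates is not carried out, and there is no uniform mechanism in $\ACFm$ that lets you collapse $\forall^U$ over a split formula back to a split formula.

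The paper avoids this entirely by a different decomposition. Rather than first pushing $z$ out of the $\psi_j$ via Lemma~\ref{thm:replace_var}, it uses the bound $d$ on the number of roots of $p$ directly: the new partition of cases is indexed by pairs of functions $f,g:J\to\mathbb{N}$ with $\sum_j f(j)\le d$, where case $(f,g)$ records that exactly $g(j)$ roots satisfy $\exists^U\vec y\,\eta_j^+$ and exactly $f(j)$ of those also satisfy $\forall^U\vec y'\,\eta_j^-$. The key point is that these counts can be arranged so that every needed existential-in-$U$ statement lands in $\eta^+_{f,g}$ and every needed universal-in-$U$ statement lands in $\eta^-_{f,g}$, matching the polarity of the outer quantifier in the controlled-formula format. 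The remaining quantifiers $\exists z_1,\dots,z_m$ over roots then range only over $\mathcal{L}$-formulas and are eliminated by ACF quantifier elimination alone, with no $\forall^U$ trapped inside an $\exists z$. This polarity-aware counting is precisely what your route via Lemma~\ref{thm:replace_var} loses.
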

\begin{proof}
  The first disjunct $\bigvee_{i\leq d}p_i(\vec x)\neq 0$ is equivalent to a controlled formula and does not depend on $z$, so we can have one case in our controlled formula asserting that the formula is false if $\bigwedge_{i\leq d}p_i(\vec x)=0$. 
  
  We focus on the other cases, where the polynomial $\sum_{i\leq d}p_i(\vec x)z^i=0$ is non-trivial. Let $\bigoplus_{j\in J}\eta^{\pm}_j(\vec x,z;\vec y_j)\rightarrow\psi_j(\vec x,z;\vec y_j)$ be the controlled formula equivalent to $\phi$. 

  We will consider pairs of functions $f,g:J\rightarrow\mathbb{N}$ such that $\sum_{j\in J}f(j)\leq d$ and $f(j)\leq g(j)\leq d$ for all $j$. We will abbreviate $\sum_{j\in J}f(j)$ by just $\sum f$.

  We will define a new controlled formula where $J'$ is the set of such pairs $f,g$. The idea is that the case corresponding to $f,g$ should be the case where, for each $j$, there are exactly $g(j)$ roots of $p(\vec x,z)=0$ which satisfy $\eta^+_j$, and exactly $f(j)$ of them also satisfy $\eta^-_j$.

  First, for each $j$ and $g(j)$, first consider the formula $\eta^+_{f,g}$ stating ``there are at least $g(j)$ roots which satisfy $\eta^+_j$ and at least $g(j)-f(j)$ of them do not satisfy $\eta^-_j$''. That is,
  \[\exists z_1,\ldots,z_{g(j)}\bigwedge_{i}p(\vec x,z_i)=0\wedge\bigwedge_{1\leq i<i'<g(z)}z_i\neq z_j\wedge\bigwedge_i\exists^U\vec y,\theta^+_j(\vec x,z_i,\vec y)\wedge\bigwedge_{i>f(j)}\exists^U\vec y\,\neg\theta^-_j(\vec x,z_i,\vec y).\]
  All the quantifiers are existential, so standard operations on first-order formulas (pulling existentials out of conjunctions, putting a formula in disjunctive normal form, and splitting the existential over the disjuncts) together with quantifier elimination for $\ACF$ (because the variables $z_i$ only appear in $\mathcal{L}$-formulas) allows us to rewrite this as a formula of the correct form.

Dually, we may take $\eta^-_{f,g}$ to state that, for each $j$, ``there are at most $\sum f$ roots, and at least $\sum f-g(j)$ of them do not satisfy $\eta^+_j$ while at least $\sum f-g(j)+f(j)$ of them satisfy either $\eta^-_j$ or $\neg\eta_j^+$''. We can rewrite this as a formula of the right kind in the same way. Observe that, taken together, $\exists^U\vec y\,\eta^+_{f,g}\wedge\forall^U\vec y\,\eta^-_{f,g}$ states that there are exactly $\sum f$ roots of $p(\vec x,z)=0$ and that for each $j$, there are exactly $g(j)$ of them satisfying $\eta^+_j$, while exactly $g(j)-f(j)$ of those also fail to satisfy $\eta^-_j$. This is precisely what we need, since it ensures there are exactly $f(j)$ roots in the $j$-th case.

To define $\psi_{f,g}$, we notice that our witnesses tell us precisely how to identify the $f(j)$ roots of $p(\vec x,z)=0$ which are in the $j$-th case, so in this case $\exists z\,p(\vec x,z)=0\wedge\phi(\vec x,z)$ is equivalent to stating that there are $\sum f$ values $z_i$ so that:
\begin{itemize}
\item each $z_i$ is a root of $p(\vec x,z)=0$,
\item the $z_i$ are all distinct,
\item they list the roots in the order of cases---that is, the first $f(1)$ roots are in the first case, the next $f(2)$ in the second case, and so on,
\item for some $i\leq \sum f$,  $\psi_j(\vec x,z_i,\vec y_j)$ holds when we take the $\vec y_j$ to be the suitable witnesses.
\end{itemize}
Once again we may write the inside of this formula in disjunctive normal form, distribute the existential over disjunctions, pull all $U$-formulas out of the quantifiers because they do not depend on the $z_i$, and then use quantifier-elimination for $\ACF$ to rewrite the $\mathcal{L}$-formulas as quantifier-free formulas.
\end{proof}

We finally combine these results to show the main quantifier elimination result of this section. 

\begin{theorem}
  Let $\phi(\vec x)$ be a $\mathcal{L}(U,\Sigma)$-formula. Then $\phi$ is equivalent to a controlled formula.
\end{theorem}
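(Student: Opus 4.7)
I would proceed by induction on the syntactic complexity of $\phi$. For the base cases, each $\mathcal{L}$-atom $p(\vec x) = 0$ is itself a split formula, hence trivially controlled; the atom $U(x)$ is equivalent to the controlled formula with $J = \{1, 2\}$ given by $\eta^+_1(x; u) \equiv (u = x)$, $\eta^-_1 \equiv \top$, $\psi_1 \equiv \top$, and $\eta^+_2 \equiv \top$, $\eta^-_2(x; u') \equiv (u' \ne x)$, $\psi_2 \equiv \bot$; and each atom $\Sigma_{\vec k}(\vec y)$ reduces by its defining axiom in $\ACFm$ to $\bigwedge_i U(y_i) \wedge \sum_i k_i y_i = 0$, a Boolean combination of cases we have just treated. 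In the inductive step, closure of controlled formulas under $\neg, \wedge, \vee$ was noted immediately after the definition; closure under $\exists^U$ is Lemma~\ref{thm:controlled_U}; and closure under $\forall^U$ follows by $\forall^U u\,\phi \equiv \neg \exists^U u\, \neg\phi$. The only nontrivial case is $\exists z\,\phi(\vec x, z)$ where $z$ ranges unrestrictedly over the field, with $\forall z$ again dual.

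For this case, my plan is first to apply the corollary of Lemma~\ref{thm:replace_var} to each conclusion $\psi_j$ in the controlled form of $\phi$ and recombine (using the case-disambiguation technique from the proof of that lemma) to obtain an equivalent controlled formula
\[
\bigoplus_{j \in J} \eta^{\pm}_j(\vec x, z; \vec y_j) \to \psi_j(\vec x; \vec u, \vec y_j)
\]
in which $z$ appears in none of the $\psi_j$. Then pushing $\exists z$ past the outer disjunction, past the bound $\exists^U \vec y_j$, and past the $z$-free $\psi_j$ reduces the task to exhibiting a controlled formula equivalent to
\[
\chi_j(\vec x; \vec y_j) := \exists z \bigl( \eta^+_j(\vec x, z; \vec y_j) \wedge \forall^U \vec y'_j\, \eta^-_j(\vec x, z; \vec y'_j) \bigr)
\]
for each $j$, after which closure under $\wedge$, $\exists^U$, and $\vee$ reassembles the answer.

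To treat $\chi_j$ I would split $\exists z$ into three sub-cases: (a) $\exists^U z$, handled directly by Lemma~\ref{thm:controlled_U}; (b) $z \notin U$ with $z$ a root of some polynomial appearing as an $\mathcal{L}$-subatom of $\eta^{\pm}_j$ (with non-vanishing leading coefficient in $z$), handled by Lemma~\ref{thm:controlled_root} applied to each of the finitely many such polynomials in turn; and (c) $z \notin U$ avoiding every root of every such polynomial, in which case every $z$-dependent $\mathcal{L}$-atom of $\eta^{\pm}_j$ evaluates to an inequality, $\eta^+_j$ and $\eta^-_j$ collapse to split formulas not mentioning $z$, and the existence of such a $z$ is automatic from smallness of $U$ inside the infinite algebraically closed field $K$. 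The main obstacle is sub-case (c) in the presence of the outer $\forall^U \vec y'_j$: since polynomials in $\eta^-_j$ have coefficients in $\vec y'_j$, the condition ``$z$ avoids the root-set of every such polynomial for every $\vec y'_j \in U$'' is itself universally $U$-quantified. To overcome this I would put $\eta^-_j$ into conjunctive normal form, separating in each clause the literals that depend on $\vec y'_j$ from those that do not, push $\forall^U \vec y'_j$ across each separated clause, and use the closure lemmata together with smallness to repackage the result in controlled form.
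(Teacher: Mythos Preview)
Your induction is sound and your case analysis for $\exists z$ is close in spirit to the paper's, but you take two unnecessary detours and muddle the one genuinely delicate point.

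The paper does \emph{not} preprocess to eliminate $z$ from the conclusions $\psi_j$; it works directly with the controlled form of $\phi'$ and makes the algebraic/transcendental split on \emph{all} polynomials occurring in any $\eta^{\pm}_j$ or $\psi_j$ simultaneously. It also does not separate out a case $z\in U$. Neither of your additions is wrong, but both lengthen the argument without gain: once the split is made, Lemma~\ref{thm:controlled_root} followed by Lemma~\ref{thm:controlled_U} handles the algebraic disjuncts regardless of whether $z$ still appears in $\psi_j$, and the case $z\in U$ is subsumed by the algebraic case (with the trivial polynomial $z-y$).

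More importantly, your handling of sub-case~(c) is off in two places. First, ``evaluates to an inequality'' is meaningless in $\ACF$; presumably you mean that each $z$-dependent atom $p(\vec x,z,\vec y)=0$ collapses, under the avoidance hypothesis, to the coefficient condition $\bigwedge_i p_i(\vec x,\vec y)=0$, which no longer mentions $z$. Second---and this is the real issue---the CNF-and-separate workaround you propose for the universally $U$-quantified avoidance condition does not work as written: after pushing $\forall^U\vec y'_j$ across conjuncts and splitting off the $\vec y'_j$-free literals, you are left with $\forall^U\vec y'_j$ applied to a disjunction of $z$-dependent literals, and nothing has been gained. The correct resolution, which the paper carries out directly, is much simpler: take the case-(c) hypothesis to be the single global condition $\bigwedge_i\forall^U\vec y\,\neg\theta_i(\vec x,z,\vec y)$, with $\vec y$ ranging over \emph{all} the $U$-variables at once. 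Under this hypothesis every $z$-atom anywhere in the controlled formula collapses to its $z$-free coefficient condition, so the resulting formula $\bigoplus_j\eta^{\pm*}_j\rightarrow\psi^*_j$ no longer contains $z$ and is already controlled; and the existence of a $z$ meeting the hypothesis is precisely the consequence of smallness that there is always an element transcendental over $G\cup\{\vec x\}$. No CNF manipulation is needed.
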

\begin{proof}
  By induction on $\phi$. If $\phi$ is $Ut$ for some term $t$, it is equivalent to $\exists^Uy\,\top \rightarrow y=t$. An equality $p(\vec x)=0$ is equivalent to $\top\rightarrow p(\vec x)=0$. We have discussed the case of connectives above.

  Consider a formula $\exists z\,\phi'(\vec x,z)$. By the inductive hypothesis, $\phi'(\vec x,z)$ is equivalent to a controlled formula $\bigoplus_{j\in J}\eta^{\pm}_j(\vec x,z;\vec y_j)\rightarrow\psi_j(\vec x,z;\vec y_j)$.
  
  Enumerate all polynomials appearing in any $\eta^{\pm}_j$ or $\psi_j$ which contain $z$, $p_1,\ldots,p_m$. Write each polynomial as a polynomial in $z$, $p_i(\vec x,z,\vec y)=\sum_{j\leq d_i}p_{i,j}(\vec x,\vec y)z^j$ and let $\theta_i(\vec x,z,\vec y)$ be the formula
  \[\left(\bigvee_{j\leq d_i}p_{i,j}(\vec x,\vec y)\neq 0\right)\wedge p_i(\vec x,z,\vec y)=0\]
  which says that $z$ is a root and also the polynomial is non-trivial.

 We split into cases---if there is a $z$, either it is a root of some $p_i$, or it isn't, so $\phi$ is equivalent to
  \[\exists z\left[\bigvee_{i\leq m}\exists^U\vec y\,\theta_i(\vec x,z,\vec y)\wedge\phi'(\vec x,z)\right]\vee\left[\bigwedge_{i\leq m}\forall^U\vec y\,\neg\theta_i(\vec x,z,\vec y)\wedge\phi'(\vec x,z)\right].\]

  Distributing the $\exists z$ over the disjunction, it suffices to show that each disjunct is equivalent to a controlled formula.

  Consider any of the first $m$ disjuncts. Rearranging, they have the form
  \[\exists^U\vec y\exists z\,p(\vec x,z,\vec y)=0\wedge \bigvee_{j\leq d_i}p_{i,j}(\vec x,\vec y)\neq 0\wedge\phi'(\vec x,z).\]
  Since $\bigvee_{j\leq d_i}p_{i,j}(\vec x,\vec y)\neq 0\wedge\phi'(\vec x,z)$ is equivalent to a controlled formula (it is a Boolean combination of $\phi'$, which is covered by the inductive hypothesis, and atomic formulas), we may apply Lemma \ref{thm:controlled_root} to show that
  \[\exists z\,p(\vec x,z,\vec y)=0\wedge \bigvee_{j\leq d_i}p_{i,j}(\vec x,\vec y)\neq 0\wedge\phi'(\vec x,z)\]
  is equivalent to a controlled formula, and then apply Lemma \ref{thm:controlled_U} $|\vec y|$ times to show that the whole disjunct is equivalent to a controlled formula.\footnote{This precisely corresponds to the model theoretic argument: one case is that we first add enough elements satisfying $U$ to make $z$ algebraic and then show that we can add $z$. On the syntactic side, these steps happen in the opposite order: we first reduce to a statement involving some algebraic after adding elements of $U$, and then eliminate the extra quantifiers over $U$.}

  Consider the final disjunct. Let $\eta^{\pm*}_j$ and $\psi^*_j$ be the result of replacing each polynomial equality $p_i(\vec x,z,\vec y)=0$ with $\bigwedge_{j\leq d_i}p_{i,j}(\vec x,\vec y)=0$. Note that the resulting formula no longer depends on $z$. Then the final disjunct is equivalent to
  \[\exists z\,\bigwedge_{i\leq m}\forall^U\vec y\,\neg\theta_i(\vec x,\vec z,\vec y)\wedge \bigoplus_j\eta^{\pm*}_j\rightarrow\psi^*_j.\]
Because $U$ is small, the left conjunct is provable in $\ACFm$---there always exists an element not algebraic in $G\cup\{\vec x\}$ \cite{MR2235481}[Lemma 2.2(2)]. Therefore the final disjunct is equivalent to this controlled formula.
\end{proof}

\begin{cor}[\cite{MR2235481}[Theorem 3.8]]
  Each $\mathcal{L}(U,\Sigma)$-formula $\phi(\vec x)$ is equivalent in $\ACFm$ to a Boolean combination of formulas of the form
  \[\exists^U\vec y(\theta(\vec y)\wedge\psi(\vec x,\vec y))\]
  where $\theta$ is a $U$-formula and $\psi$ is a $\mathcal{L}$-formula.
\end{cor}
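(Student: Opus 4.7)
The plan is to combine the preceding theorem with a straightforward syntactic manipulation. By that theorem, $\phi(\vec x)$ is already equivalent in $\ACFm$ to some controlled formula $\bigoplus_{j\in J}\eta^{\pm}_j(\vec x;\vec y_j)\rightarrow\psi_j(\vec x;\vec y_j)$. Unfolding the $\bigoplus$ abbreviation turns this into a finite disjunction over $j\in J$ of conjunctions of the shape $\exists^U\vec y_j(\eta^+_j\wedge\psi_j)\wedge\forall^U\vec y'_j\,\eta^-_j$. Since Boolean combinations of KG-formulas are closed under conjunction and disjunction, it suffices to show that for every split formula $\sigma(\vec x;\vec y)$, both $\exists^U\vec y\,\sigma$ and $\forall^U\vec y\,\sigma$ are equivalent to Boolean combinations of KG-formulas.

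The heart of the argument is just disjunctive normal form. By definition, a split formula $\sigma(\vec x;\vec y)$ is a Boolean combination of $U$-formulas whose free variables lie in $\vec y$ and of $\mathcal{L}$-formulas in all variables. Putting $\sigma$ in DNF and grouping factors on the two sides of the split yields an equivalent representation $\sigma\equiv\bigvee_k\bigl(\theta_k(\vec y)\wedge\alpha_k(\vec x,\vec y)\bigr)$ in which each $\theta_k$ is a $U$-formula and each $\alpha_k$ is an $\mathcal{L}$-formula (a conjunction of literals of each kind). Distributing the existential then gives $\exists^U\vec y\,\sigma\equiv\bigvee_k\exists^U\vec y\bigl(\theta_k(\vec y)\wedge\alpha_k(\vec x,\vec y)\bigr)$, which is a disjunction of KG-formulas. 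The universal case is handled by the dual identity $\forall^U\vec y\,\sigma\equiv\neg\exists^U\vec y\,\neg\sigma$: since split formulas are themselves closed under negation, applying the previous step to $\neg\sigma$ and then negating produces a Boolean combination of KG-formulas.

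Putting these pieces together, every disjunct of the unfolded controlled formula is a Boolean combination of KG-formulas, hence so is the full disjunction, and hence so is $\phi(\vec x)$. No genuine obstacle arises at this stage; the real work was done in the preceding theorem. The only subtlety is the variable-tracking built into the definition of a split formula---namely that its $U$-part mentions only the $\vec y$-variables---which is precisely what allows each resulting $\exists^U$-block to have the required KG-shape. That $\psi$ in a KG-formula is allowed here to be an arbitrary $\mathcal{L}$-formula, rather than only a quantifier-free one as in the original definition, is harmless by quantifier elimination for $\mathrm{ACF}$.
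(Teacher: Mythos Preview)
Your proof is correct and follows essentially the same approach as the paper: the paper's proof simply says ``the result follows by writing the components of a controlled formula in disjunctive or conjunctive normal form and distributing quantifiers,'' and you have spelled out exactly that argument in detail, including the correct handling of the universal conjunct via duality.
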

\begin{proof}
  By the previous theorem, $\phi(\vec x)$ is equivalent to a controlled formula. The result follows by writing the components of a controlled formula in disjunctive or conjunctive normal form and distributing quantifiers.
\end{proof}

\section{Real Closed Fields with Small Subgroups}

\subsection{Setting Up}

Van den Dries and G\"unaydin consider a related extension of real closed fields.  Informally, a multiplicative subgroup $\Gamma\subseteq K^\times$ has the Mann property if it has only a small number of additive relationships. We would like to say that every linear equation with rational coefficients,
\[\sum_i q_ix_i=1,\]
has only finitely many solutions where the $x_i$ come from $\Gamma$. This would be slightly too strong: if there happens to be a solution in $\Gamma$ to the homogeneous equation $\sum_{i\in I}q_ix_i=0$ for some non-empty subset $I$ then any solution to $\sum_i q_ix_i=1$ in $\Gamma$ gives rise to infinitely many solutions for the trivial reason that we could always multiply the elements in $I$ by any element of $\Gamma$ to get a new solution.

So to state the Mann property correctly, we need to define this case away.
\begin{definition}
  A solution $\{\gamma_i\}$ to $\sum_i q_ix_i=1$ is \emph{degenerate} if there is some non-empty $I$ so that $\sum_i q_i\gamma_i=0$.

  $\Gamma$ satisfies the Mann property if every linear equation $\sum_i q_ix_i$ has only finitely many \emph{non-degenerate} solutions in $\Gamma$.
\end{definition}
When $K$ is $\mathbb{R}$, typical examples of subgroups with the Mann property $2^{\mathbb{Z}}$, $2^{\mathbb{Q}}$, and $2^{\mathbb{Z}}3^{\mathbb{Z}}$.

When $\Gamma\subseteq\mathbb{R}^{>0}$ is not dense, it is cyclic, and quantifier elimination in this case was shown in \cite{MR808687}, and an algorithm for this quantifier elimination in the case of $2^{\mathbb{Z}}$ was given in  \cite{MR2289704}.

So for the remainder of this section, we focus on the dense case. Given a dense $\Gamma\subseteq\mathbb{R}^{>0}$ satisfying the Mann property, Van den Dries and G\"unaydin define a theory $\RCFU$. This theory is in the language of ordered rings extended by a unary relation $U$, new constant symbols $\{c_\gamma\}_{\gamma\in \Gamma}$, and, for each sequence $k_1,\ldots,k_n$ of integers, an $n$-ary relation symbol $\Sigma_{k_1,\ldots,k_n}$. The theory $\RCFU$ then consists of;
\begin{itemize}
\item the axioms of $\RCF$,
\item axioms stating that $U$ is a dense subgroup of the positive elements,
\item axioms stating that the map $\gamma\mapsto c_\gamma$ is an order-preserving homomorphism,
\item axioms stating, for any integers $k_1,\ldots,k_n$ and $\gamma_1,\ldots,\gamma_n\in\Gamma$, that $\sum_i k_ic_{\gamma_i}>0$ holds exactly when $\sum_i k_i\gamma_i>0$ and that $\Sigma_{k_1,\ldots,k_n}c_{\gamma_1}\cdots c_{\gamma_n}$ holds exactly when $\sum_i k_i\gamma_i=0$,
\item the Mann axioms stating that, for each equation $\sum_i a_ix_i=a_0$ with the $a_i\in\mathbb{Z}^\times$, the finitely many non-degenerate solutions to this equation present in $\Gamma$ are the only ones.
\end{itemize}
This theory is not complete, since there is a great deal left to specify about $\Gamma$, but, as in the algebraic case, Van den Dries and G\"unaydin show a relative quantifier elimination, showing that all formulas are equivalent to Boolean combinations of a simpler family of formulas.

We reuse the names $U$-formula and $KG$-formula in this section with slightly different meanings, analogous but not identical to those in the previous section.
\begin{definition}
  A \emph{$U$-formula} is defined inductively by:
  \begin{itemize}
  \item atomic formulas whose terms are built from variables, $\{c_\gamma\}$, and $\cdot$ (but not $0$, $+$, or $-$) are $U$-formulas,
  \item if $\theta$ is a $U$-formula, so is $\neg\theta$,
  \item if $\theta,\theta'$ are $U$-formulas, so are $\theta\wedge\theta'$ and $\theta\vee\theta'$,
  \item if $\theta$ is a $U$-formula, so are $\exists^Ux\,\theta$ and $\forall^Ux\,\theta$.
  \end{itemize}


  A \emph{KG-formula} is a formula of the form $\exists^U\vec y\,(\theta(\vec y)\wedge\psi(\vec x,\vec y))$ where $\theta$ is a $U$-formula and $\psi$ is a quantifier-free formula in $\mathcal{L}_<(\{c_\gamma\}_{\gamma\in\Gamma})$ (the language of ordered fields together with the constants $\{c_\gamma\}_{\gamma\in\Gamma}$).

\end{definition}

Van den Dries and G\"unaydin's version of quantifier elimination is the following.
\begin{theorem}[\cite{MR2235481}[Lemma 7.6]]
Every formula is equivalent in $\RCFU$ to a Boolean combination of KG-formulas.  
\end{theorem}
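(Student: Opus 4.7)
The plan is to mirror the strategy of the previous section, adapting the controlled formula machinery to the ordered setting with the Mann property. First I would define a \emph{split formula} in this language to be a Boolean combination of $U$-formulas (whose variables are restricted to the $U$-sort) and quantifier-free $\mathcal{L}_<(\{c_\gamma\}_{\gamma\in\Gamma})$-formulas (which may involve all variables). A partition of cases and a controlled formula are then defined verbatim as in Section 3 via rigid families of split formulas indexed by a partition of cases, and the Boolean closure properties of controlled formulas go through unchanged.

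The central technical step is the analog of Lemma \ref{thm:split_equality}: given an atomic formula $p(\vec x, z; \vec u) \bowtie 0$ with $\bowtie \in \{=, <, \leq\}$ and $\vec u$ constrained to $U$, $\RCFU$ proves it is equivalent to a controlled formula $\bigoplus_{j\in J}\eta^{\pm}_j(\vec x, z;\vec y_j)\rightarrow\psi_j(\vec x;\vec u,\vec y_j)$ whose case-distinguishing $\eta^{\pm}_j$ do not involve $\vec u$ and whose conclusions $\psi_j$ do not involve $z$. As in the algebraically closed case, I would proceed by induction on $|\vec u|$, using the spanning-set argument to express $z$ (or rather, $v$ and its relevant powers) as rational functions in elements of the smaller field. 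The extra wrinkle here is that we must also track sign conditions: once $v$ has been written in terms of data over $\vec x, \vec u_0, \vec y$, the sign of the resulting expression in the real closure is determined by a finite sign chart for auxiliary polynomials. I would use a standard real-algebraic sign-determining procedure (Thom encoding, or equivalently a Sturm-style computation) to turn this sign chart into a split formula in the required variables.

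With that lemma in place, the analogs of Lemma \ref{thm:replace_var}, Corollary \ref{thm:replace_one_var}, Lemma \ref{thm:controlled_U}, and Lemma \ref{thm:controlled_root} go through with essentially the same arguments, but each existential case requires one substantive change. For $\exists^U u$, the role previously played by smallness is now played by the Mann property: when $u$ is constrained by a nontrivial linear relation with existing parameters, the Mann axioms reduce the possible witnesses to one of finitely many explicit non-degenerate solutions, each of which can be enumerated. For the outer $\exists z$, one splits into the case where $z$ is algebraic over the ambient parameters (handled by an analog of Lemma \ref{thm:controlled_root}, now enumerating real roots of each candidate polynomial together with sign data tagging which root is taken) and the ``generic'' case, where $z$ lies in an open cell determined by finitely many polynomials. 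In the generic case, o-minimality of $\RCF$ reduces the truth of the formula to data at the endpoints of the cell, and the smallness of $U$ implied by density plus Mann guarantees that a suitably generic witness actually exists in any nonempty cell, giving a provable reduction to a controlled formula with no free occurrence of $z$. Assembling these lemmas, the main theorem follows by induction on the complexity of $\phi$, and Boolean combinations of KG-formulas are recovered from controlled formulas by writing the split components in disjunctive normal form and distributing the bounded quantifiers, exactly as in the corollary at the end of Section 3.

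The main obstacle will be the sign bookkeeping inside the analog of Lemma \ref{thm:split_equality}. In the ACF setting it sufficed to produce a polynomial witnessed by $v$ over $\vec x, \vec y, \vec u_0$; in $\RCF$ we must additionally pin down which real root $v$ is, and this sign data has to sit inside a split formula whose ordered-field part does not involve $z$. Arranging the Thom-style sign encoding so that every sign condition on $z$ can be read off from data on $\vec x, \vec u, \vec y$ alone, and interleaving this with the spanning-set recursion, is the place where the genuinely new real-algebraic work is concentrated; the remaining steps are modular adaptations of their algebraically closed analogs.
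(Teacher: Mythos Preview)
Your plan diverges from the paper's in a way that creates a genuine gap. The paper does \emph{not} port the spanning-set argument of Lemma~\ref{thm:split_equality} to the ordered setting; in fact the controlled formulas in Section~4 are simpler (the $\eta^{\pm}_j$ are just quantifier-free $\mathcal{L}_<(\{c_\gamma\})$-formulas, with no $U$-component), and the intermediate class is not ``split formulas'' but \emph{permitted formulas}: Boolean combinations of ordered-field atoms and \emph{root formulas} $\exists^Uu\,(t_nu^n+t_0=0)$ with $t_0,t_n$ monomials. The central technical engine is the Mann property, bootstrapped first from integer to $\mathbb{Q}(U)$-coefficients and then to arbitrary coefficients (Lemma~\ref{thm:general_mann}), and then used to show that any polynomial equality $\sum_i t_iu^i=0$ with $u\in U$ collapses to a positive Boolean combination of \emph{monomial} equalities $tu^m=s$ (the analog of \cite{MR2235481}, Lemma~5.12). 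Everything downstream---guarded formulas, Lemmas~\ref{thm:equiv_guarded}, \ref{thm:R_exists_U_free_elim}, \ref{thm:R_exists_U_bound_elim}, \ref{thm:R_exists_U_elim}---runs on this monomial reduction plus density of $U$, not on any spanning-set computation.

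Your proposed analog of Lemma~\ref{thm:split_equality} is where the plan breaks. That lemma works in ACF because an equality $p(\vec x,z,\vec y)=0$ makes $z$ algebraic over $\vec x,\vec y$, and then $v$ becomes a root of an explicit polynomial over the smaller parameters. For an inequality $p(\vec x,z;\vec u)<0$ there is no such algebraic hook on $z$, and Thom encoding only tells you which root of a polynomial you are looking at---it does not manufacture a polynomial constraint where none exists. Even restricting to equalities, the spanning-set argument outputs a polynomial identity for $v$ over $\vec x,\vec y,\vec u_0$, not a monomial one, so you would still need the Mann-based collapse to get into the target class; at that point the spanning-set step is redundant. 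Finally, smallness is not an axiom of $\RCFU$ and is not what drives the generic case: density of $U$ (and of its complement) handles the transcendental witnesses directly, while Mann does the heavy lifting for algebraic dependence inside $U$. You have the Mann property relegated to a late $\exists^U u$ step; in the paper it is the first and main tool.
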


Their proof involves beginning with sufficiently saturated models and (given some assumptions ensuring the models are elementarily equivalent) constructing a system of back-and-forth models.

Translating this proof of quantifier elimination into an algorithm is complicated by the fact that the argument doesn't use arbitrary models from their back-and-forth system. The argument goes roughly as follows. Suppose we have two sufficiently saturated models $\mathcal{M}$ and $\mathcal{M}'$, and elements $\vec x\in|\mathcal{M}|^{|\vec x|}$, $\vec x'\in|\mathcal{M}|^{|\vec x'|}$ so that $\vec x$ and $\vec x'$ satisfy the same KG-formulas; we would like to show that $\phi(\vec x)$ holds (in $\mathcal{M}$) exactly when $\phi(\vec x')$ holds (in $\mathcal{M}'$).

The first step is to find an isomorphic pair of small models $M_0\subseteq\mathcal{M}, M'_0\subseteq\mathcal{M}'$ containing $\vec x$ and $\vec x'$, respectively. To construct these, we take $M_0$ to be a substructure of $\mathcal{M}$ elementary with respect to KG-formulas, and similarly for $M_0'$, using the fact that $\vec x$ and $\vec x'$ satisfy the same KG-formulas to ensure they're isomorphic.

Once we have constructed an isomorphic pair $M_i,M'_i$, we need to show that for any $a\in\mathcal{M}$, there is an $M_{i+1}\supseteq M_i\cup\{a\}$ and an $M'_{i+1}\supseteq M'_i$ so that $M_{i+1}$ and $M'_{i+1}$ are isomorphic (and the isomorphism extends the isomorphism between $M_i$ and $M'_i$). We do this by considering extensions of the form $(M_i(z))^{rc}$. (In the case where $z$ is not in $U^{\mathcal{M}}$, we require some further restrictions on $z$.)

To turn this into an explicit algorithm, we need to show that statements about $(M_i(z))^{rc}$ are equivalent to simpler statements---roughly speaking, combinations of statements about $M_i$ and statements about $z$ which do not quantify over $(M_i(z))^{rc}$. A statement about $(M_i(z))^{rc}$ could be a formula $\phi(\vec x,z)$; more generally, it might be a formula $\phi(\vec x,z,\vec w)$, where the $\vec w$ are other elements of $(M_i(z))^{rc}$---for instance, roots of polynomials.

In our algorithm below, this will turn into several steps, during which we progressively reduce statements about $(M_i(z))^{rc}$ to simpler statements.

Our notion of a controlled formula is slightly simpler than in the previous section. It turns out we need to worry less about the precise division of variables.

\begin{definition}
  A \emph{controlled formula} is a formula of the form
   \[\bigoplus\eta^\pm_j(\vec x,\vec y_j)\rightarrow\psi_j(\vec x,\vec y_j)\]
   where the $\eta^{\pm}_j$ are a Boolean combinations of atomic formulas in $\mathcal{L}_<(\{c_\gamma\}_{\gamma\in\Gamma})$ and each $\psi_j$ is rigid under $\eta^{\pm}_j$.
 \end{definition}
 Note that we have not yet placed \emph{any} restriction on the form of the $\psi_j$, though we will always be interested in the case where there is some sort of restriction on them.

\subsection{The Mann Property}

Before proving any quantifier elimination results, we need to get a more general version of the Mann property (essentially the work of Section 5 of \cite{MR2235481}).

\begin{definition}
  Let $t_0,t_1,\ldots, t_n$ be a sequence of terms (in some variables $\vec x$).  We write $\mathrm{Mann}(\{t_i\}_{i\leq n},\{w_{ik}\}_{i\leq n,k\leq b})$ for the formula
  \begin{align*}
    t_0=0\vee&(\forall^U y_1,\ldots,y_n\,\sum_{i\leq n}t_iy_i\neq t_0)\\
    \vee&\bigl(\bigwedge_{k\leq b}\sum_{i\leq n}t_iw_{ik}=t_0\\
    \wedge&   \forall^U y_1,\ldots,y_n\,\left(\sum_{i\leq n}t_iy_i=t_0\wedge\bigwedge_{S\subseteq[1,n],S\neq\emptyset}\sum_{i\in S}t_iy_i\neq 0\right)\rightarrow\bigvee_{k\leq b}\bigwedge_{i\leq n}y_i=w_{ik}\bigr).
  \end{align*}
\end{definition}
That is, if $t_0\neq 0$ and there are any solutions to the equation $\sum_{i\leq n}t_iy_i=t_0$ satisfying $U$ then the $\{w_{ik}\}$ are the complete list of non-degenerate solutions in $U$ to this equation. Note that we are allowed to repeat solutions, so the value $b$ is an upper bound on the number of solutions, not an exact value.

The Mann axioms guarantee that we have $\exists^U\{w_{ik}\}_{i\leq n,k\leq b}\,\mathrm{Mann}(\{t_i\},\{w_{ik}\})$ when the $t_i$ are integers. The next lemma extends this to the case where the $t_i$ are in the subfield generated by $U$.
\begin{lemma}[See \cite{MR2235481}[Lemma 5.5]]
  Let $t_0,t_1,\ldots,t_n$ be terms in the variables $\vec y$. Then there is a $b$ so that $\RCFU$ proves
\[\forall^U\vec y\,\exists^U\{w_{ik}\}_{i\leq n,k\leq b}\ \mathrm{Mann}(\{t_i\},\{w_{ik}\}_{i\leq n,k\leq b}).\]
\end{lemma}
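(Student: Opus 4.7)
The plan is to reduce to the integer-coefficient Mann axioms (which are built into $\RCFU$) by fully expanding each $t_i$ into monomials and treating the resulting relation as an integer-coefficient Mann equation in a larger collection of $U$-elements.

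First I would expand $t_i=\sum_{j=1}^{m_i}\alpha_{ij}\nu_{ij}(\vec y)$ where $\alpha_{ij}\in\mathbb{Z}$ and each $\nu_{ij}(\vec y)$ is a product of variables from $\vec y$ and constants $c_\gamma$, which lies in $U$ whenever $\vec y\in U^{|\vec y|}$. The equation $\sum_{i\geq 1}t_iy_i=t_0$ becomes the integer-coefficient relation
\[\sum_{i\geq 1,\,j}\alpha_{ij}(\nu_{ij}(\vec y)y_i)=\sum_j\alpha_{0j}\nu_{0j}(\vec y)\]
in the $U$-elements $u_{ij}=\nu_{ij}(\vec y)y_i$ (for $i\geq 1$) and $u_{0j}=\nu_{0j}(\vec y)$ (fixed by $\vec y$). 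I proceed by induction on the total monomial count $N=\sum_i m_i$. For the base case in which each $t_i=\alpha_i\nu_i(\vec y)$ is a single monomial, substituting $z_i=(\nu_i(\vec y)/\nu_0(\vec y))y_i$, which lies in $U$ because $U$ is a multiplicative group, turns the equation into the integer-coefficient Mann equation $\sum_i\alpha_iz_i=\alpha_0$. The Mann axioms produce a uniform bound $b$ depending only on the $\alpha_i$, and the substitution is a bijection preserving non-degeneracy because $\sum_{i\in S}t_iy_i=\nu_0(\vec y)\sum_{i\in S}\alpha_iz_i$, so the same $b$ works for the original.

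For the inductive step I would stratify each non-degenerate solution $(y_i^{*})$ by the partition $\mathcal{P}=\{T_1,\dots,T_r\}$ of the expanded index set $\{(i,j)\}$ that records exactly which sub-sums $\sum_{(i,j)\in T_l}\beta_{ij}u_{ij}$ vanish at the resulting $(u_{ij}^{*})$, where $\beta_{ij}=\alpha_{ij}$ for $i\geq 1$ and $\beta_{0j}=-\alpha_{0j}$. There are finitely many such partitions, so it suffices to bound the contribution of each. For the trivial partition (the expansion itself is non-degenerate), dividing through by some $u_{0j_0}$ produces an inhomogeneous integer-coefficient Mann equation in the variables $w_{ij}=u_{ij}/u_{0j_0}\in U$, the Mann axioms bound the non-degenerate solutions, and each such Mann solution determines at most one $(y_i^{*})$ via the consistency constraint $w_{ij}/w_{ij'}=\nu_{ij}(\vec y)/\nu_{ij'}(\vec y)$ within each block of fixed $i$. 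For a non-trivial partition, each part $T_l$ yields a sub-equation $\sum_i s_i^{(T_l)}y_i=s_0^{(T_l)}$ with $s_i^{(T_l)}=\sum_{j:(i,j)\in T_l}\alpha_{ij}\nu_{ij}(\vec y)$ and strictly fewer monomials, to which the inductive hypothesis applies.

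The main obstacle is that a part $T_l$ containing no $(0,j)$-index produces a \emph{homogeneous} sub-equation $\sum_i s_i^{(T_l)}y_i=0$, which in isolation admits infinitely many solutions scaling by an arbitrary $\gamma\in U$, so one cannot naively invoke Mann on it. The fix is that the tuple $(y_i^{*})$ is shared across all parts: at least one part must contain the $(0,j)$-indices contributing to $t_0\neq 0$, and its inhomogeneous Mann bound pins down the scaling that the homogeneous parts can take. Carrying this out cleanly likely requires stating the inductive hypothesis as a uniform bound valid for every equation of the specified shape with $\leq N$ monomials, and pairing each homogeneous part with the distinguished inhomogeneous part before invoking induction, so that what is bounded inductively is the number of $(y_i^{*})$ simultaneously consistent with a fixed collection of sub-equations rather than the solutions of a single one.
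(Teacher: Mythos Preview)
Your overall strategy—expand each $t_i$ into integer-weighted monomials, push the equation to an integer-coefficient Mann equation in a larger set of $U$-variables, and stratify by the partition recording which sub-sums vanish—is exactly the paper's strategy. The difference is organizational: you try to run an induction on the total monomial count $N$, re-grouping each part $T_l$ of the partition back into a smaller equation $\sum_i s_i^{(T_l)}y_i = s_0^{(T_l)}$ in the original format. The paper instead does everything in a single pass: it stays at the expanded integer level, lists \emph{all} partitions $\{I_j\}$ of the expanded index set together with all Mann-solutions on each piece (the inhomogeneous piece $I_0$ and the homogeneous pieces $I_j$, $j>0$, normalized by setting one coordinate to $1$), and then attempts to recover the scaling constants $c_j$ by propagating the auxiliary constraints $v'_i=v'_{i'}$ and $v'_i=1$ that encode the original block structure.

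Your inductive re-grouping creates two difficulties, only one of which you flag. The one you flag (homogeneous parts) is real, and your proposed fix—pairing with the inhomogeneous part and strengthening the induction to bound simultaneous solutions of a \emph{system}—is vague and changes the statement being proved. The one you do not flag is that even for an inhomogeneous part $T_l$, the solution $(y_i^*)$ need not be non-degenerate for the \emph{sub}-equation $\sum_i s_i^{(T_l)}y_i=s_0^{(T_l)}$, so the inductive hypothesis does not apply directly; handling this forces further partitioning and the induction unravels. The paper's direct approach sidesteps both issues: its key observation is that if the constraint-propagation process fails to determine some $c_j$, then the union $I$ of the undetermined parts has the property that for each original index $i$, the preimage $\pi^{-1}(i)$ is either contained in $I$ or disjoint from it, which forces $\sum_{i:\pi^{-1}(i)\subseteq I} t_i y_i^*=0$ and hence degeneracy of the original solution. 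That single argument replaces your induction and handles the homogeneous pieces and the scaling ambiguity simultaneously.
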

\begin{proof}
  When $t_0=0$, this is immediate, so we assume not. We may expand each $t_i$ as a sum of monomials $\sum_j s_jm_j$ where $s_j$ is a (possibly empty) product of the variables from $\vec y$ and $m_j$ is an integer. Expanding each $t_i$ this way, reindexing, and moving all but one term from the right two the left, a solution to the equation $\sum_i t_i v_i=t_0$ is equivalent to a solution to the equation $\sum_{i\leq n'}m_is_iv'_i=m_0$ together with some auxiliary equations of the form $v'_i=v'_{i'}$ or $v'_i=1$ (to account for the $v'_i$ coming from expansions of the same term or from the right hand side), where the $m_i$ are integers, the $s_i$ are products of the $\vec y$, and $m_0$ is non-zero.
  
For $i'\leq n'$, let us set $\pi(i')=i$ if $v'_{i'}$ in $\sum_{i\leq n'}m_is_iv'_i=m_0$ corresponds to $v_i$ in $\sum_{i\leq n}t_iv_i=t_0$; we also set $\pi(i')=0$ if our auxiliary constraint sets $v'_{i'}$ to $1$ (that is, $v'_{i'}$ originally came from a monomial in $t_0$).

  Consider this related equation $\sum_{i\leq n'}m_iv''_i=m_0$. We already have 
  \[\exists^U\{w_{ik}\}_{i\leq n',k\leq b_0}\,\mathrm{Mann}(\{m_i\},\{w_{ik}\})\]
   for this equation. However a solution to this equation might be degenerate even when the corresponding solution to $\sum_{i\leq n}t_iv_i=t_0$ is not, so we consider something slightly more general.

  We can think of a general, possibly degenerate, solution to $\sum_{i\leq n'}m_iv''_i=m_0$ as being given by a partition $\{1,2,\ldots,n'\}=\bigcup_{j\leq d}I_j$, a non-degenerate solution $\{r'_i\}_{i\in I_0}$ to $\sum_{i\in I_0}m_iv''_i=m_0$, and, for each $0<j\leq d$, a non-degenerate solution to $\{r'_i\}_{i\in I_j}$ to $\sum_{i\in I_j}m_iv''_i=0$.

  For any non-empty subset $I\subseteq\{1,\ldots,n'\}$, fix some $i_0\in I$. Then a non-degenerate solution to $\sum_{i\in I_j}m_iv''_i=0$ is given by a multiple of a non-degenerate solution to $\sum_{i\in I\setminus\{i_0\}}m_iv'''_i=-m_{i_0}$.

  So, applying the Mann axioms to all equations $\sum_{i\in I}m_iv''_i=m_0$ and $\sum_{i\in I\setminus\{i_0\}}m_iv'''_i=-m_{i_0}$ simultaneously, we obtain a finite list of all possible combinations consisting of a partition $\{I_j\}$ and non-degenerate solutions $\{w^0_{i}\}_{i\in I_0},\{w^j_{i}\}_{i\in I_j\setminus\{i^j_0\}}$.  We need to describe how to obtain a list of non-degenerate solutions to our original equation $\sum_{i\leq n}t_iv_i=t_0$ from this, and then show that this list contains all possible non-degenerate solutions.

  So let a partition $\{I_j\}$ and non-degenerate solutions $\{w^0_{i}\}_{i\in I_0},\{w^j_{i}\}_{i\in I_j\setminus\{i^j_0\}}$ be given. For notational convenience, for $0<j$ set $w_{i^j_0}=1$, so we may then view $\{w^j_{i}\}_{i\in I_j}$ as a solution to $\sum_{i\in I_j}m_iv'''_i=0$. Note that if we multiply this solution by any fixed value, we obtain another solution to the same equation.

  We now attempt to read a solution to $\sum_{i\leq n'}m_is_iv'_i=0$ satisfying the auxiliary constraints off of these values.  First, for $i\in I_0$, we must take $v'_i=w^0_{i}/s_i$, and this gives a solution to $\sum_{i\in I_0}m_is_iv'_i=m_0$. For $0<j$ and $i\in I_j$, we are going to take $v'_i=c_jw^j_{i}/s_i$ where $c_j$ is shared by all $i\in I_j$; we need to identify the correct values of $c_j$ and see if they can be chosen in a consistent way.

  For any $j>0$, if there any $i^*\in I_j$ with $\pi(i^*)=0$, we are only interested in solutions with $v'_{i^*}=1$, so we must have $c_j=s_{i^*}/w^j_{i^*}$. If we have already determined $c_j$ and there is an $i^+\in I_j$ and an $i^*\in I_{j'}$ with $\pi(i^*)=\pi(i^+)$ then we need to have $v'_{i^*}=v'_{i^+}$. This means we must have $c_{j'}w^{j'}_{i^*}/s_{i^*}=c_jw^j_{i^+}/s_{i^+}$, so we must take $c_{j'}=c_jw^j_{i^+}s_{i^*}/s_{i^+}w^{j'}_{i^*}$. Applying this process iteratively to determine the values of the $c_j$, there are three possibilities: we eventually determine all the values $c_j$, we eventually reach two conflicting values for the same $c_j$, or we fail to determine all the values $c_j$.

  We apply this process to all (finitely many) choices of a partition $\{I_j\}$ and non-degenerate solutions $\{w^0_{i}\}_{i\in I_0},\{w^j_{i}\}_{i\in I_j\setminus\{i^j_0\}}$. Our list of solutions to $\sum_{i\leq n'}m_is_iv'_i=m_0$ (and therefore to $\sum_{i\leq n}t_iv_i=t_0$) consists of all solutions in the first case, where we determine all values of the $c_j$ uniquely without reaching a contradiction. In the other cases, $\{I_j\}$, $\{w^0_{i}\}_{i\in I_0},\{w^j_{i}\}_{i\in I_j\setminus\{i^j_0\}}$ does not give rise to a non-degenerate solution to $\sum_{i\leq n}t_iv_i=t_0$. (As we will see, the case where the requirements conflicts corresponds to a non-solution and the cases where they do not uniquely determine the $c_j$ corresponds to a degenerate solution).
  
  What remains is to show that we have obtained all non-degenerate solutions to $\sum_{i\leq n}t_iv_i=t_0$. To see this, consider any non-degenerate solution $\sum_{i\leq n}t_iu_i=t_0$. This leads to a solution $\sum_{i'\leq n'}m_iu''_i=m_0$ by taking $u''_i=s'_iu_{\pi(i)}$. We may partition $\{1,\ldots,n'\}$ into minimal sets $I_j$ with $\sum_{i\in I_j}t'_iu''_i=0$, and let $I_0$ be the remaining indices. Then our list of partitions and solutions must include this choice of partition with the solutions $u''_i$ (for $j=0$) and $u''_i/u''_{i^j_0}$ (for $j>0$).

  When we apply the process above, we wish to show that we recover this solution. Certainly we do not run into a conflicting definition of $c_j$ so we only need to check that each $c_j$ eventually gets defined. Let $J_0$ be the set of $j>0$ so that $c_j$ does not get defined and let $I=\bigcup_{j\in J_0}I_j$. Then for each $i\leq n$, we must have that $\pi^{-1}(i)$ is either contained in $I$ or disjoint from $I$. Since $\sum_{i\in I_j}m_iu''_i=0$ for each $j\in J_0$, also $0=\sum_{i\in I}m_iu''_i=\sum_{i\in I}s_im_iu_i$. This would mean that $\sum_{i\mid \pi^{-1}(i)\subseteq I}t_iu_i=0$, and therefore this solution was degenerate.
\end{proof}




The next lemma extends this to show $\exists^U\{w_{ik}\}_{i\leq n,k\leq b}\,\mathrm{Mann}(\{t_i\},\{w_{ik}\})$ for arbitrary terms $t_i$. We will need to work with names for elements of $\mathbb{Q}(U)$. We cannot name these elements uniformly, so we introduce a scheme for describing bounded portions of this field. We write $\mathbb{Q}_{\leq d}(U)$ for those elements which can be written in the form $\frac{\sum_{i\leq b}u_i}{\sum_{i\leq b'}u'_i}$ where $b,b'\leq d$ and each $u_i,u'_i$ is in $U\cup -U$.

Note that if $a_1,\ldots,a_n\in\mathbb{Q}_{\leq d}(U)$ then $\sum_{i\leq n}a_i$ is in $\mathbb{Q}_{\leq nd^n}(U)$, and if $a,b\in\mathbb{Q}_{\leq d}(U)$ then $ab\in\mathbb{Q}_{\leq d^2}(U)$.

\begin{lemma}\label{thm:general_mann}[See \cite{MR2235481}[Proposition 5.6]]
  Let $\vec x$ be variables and let $t_0,\ldots,t_n$ be terms in $\vec x$. Then 
  $\RCFU$ proves a controlled formula
  \[\forall\vec x\ \bigotimes_{j\in J}\left(\eta^{\pm}_j(\vec x,\vec y_j)\rightarrow \mathrm{Mann}(\{t_i\},\{w^j_{ik}\})\right)\]
  for some terms $w^j_{ik}$.
 \end{lemma}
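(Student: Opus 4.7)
I would reduce to the preceding lemma (Mann for terms in $U$-variables) by a case analysis on $\mathbb{Q}(U)$-linear dependencies among the monomials appearing in the expansions of $t_0(\vec x),\ldots,t_n(\vec x)$. A non-degenerate solution $(v_1,\ldots,v_n)\in U^n$ to $\sum_i t_i(\vec x)v_i=t_0(\vec x)$ forces an additive relation among $t_0,\ldots,t_n$ over $\mathbb{Q}(U)$, so once we fix the ``dependency pattern'' of the monomials in $\vec x$ the remaining solutions are constrained by a system whose coefficients live in $\mathbb{Q}(U)$, which we can then feed to the preceding lemma.

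First I would expand each $t_i(\vec x)=\sum_{\alpha\leq N}c_{i\alpha}m_\alpha(\vec x)$ with $c_{i\alpha}\in\mathbb{Z}$ and distinct monomials $m_\alpha$, and rewrite the equation as $\sum_\alpha m_\alpha(\vec x)\ell_\alpha(\vec v)=0$, where $\ell_\alpha(\vec v)=\sum_i c_{i\alpha}v_i-c_{0\alpha}$ is an integer-linear form in $\vec v$. The cases of the controlled formula are indexed by a subset $T\subseteq\{1,\ldots,N\}$ together with a tuple $R=\{r_{\alpha\beta}\}_{\alpha\notin T,\beta\in T}$ of elements of $\mathbb{Q}_{\leq d}(U)$ for a suitable bound $d$. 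Case $(T,R)$ asserts that $\{m_\beta(\vec x):\beta\in T\}$ is $\mathbb{Q}(U)$-linearly independent and that $m_\alpha(\vec x)=\sum_{\beta\in T}r_{\alpha\beta}m_\beta(\vec x)$ for $\alpha\notin T$. Representing each $r_{\alpha\beta}$ as a ratio of sums of products of entries of auxiliary $U$-witnesses $\vec y_j$, the case is expressible by a Boolean combination of atomic formulas in $\mathcal{L}_<(\{c_\gamma\}_{\gamma\in\Gamma})$ involving $\vec x$ and $\vec y_j$, as required by the shape of a controlled formula.

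In case $(T,R)$, substituting the relations $m_\alpha=\sum_{\beta}r_{\alpha\beta}m_\beta$ into $\sum_\alpha m_\alpha\ell_\alpha(\vec v)=0$ and using the $\mathbb{Q}(U)$-linear independence of $\{m_\beta:\beta\in T\}$ reduces the original equation to the system $\ell_\beta(\vec v)+\sum_{\alpha\notin T}r_{\alpha\beta}\ell_\alpha(\vec v)=0$ indexed by $\beta\in T$. Clearing denominators using the $U$-witnesses for the $r_{\alpha\beta}$ turns each equation into one whose coefficients are terms in the $\vec y_j$ alone, i.e., terms in $U$-variables. The preceding lemma applied simultaneously to this system (taking a common refinement of the partitions of cases it produces) then yields the witnesses $w^j_{ik}$ for case $(T,R)$; in the extreme ``generic'' subcase where $T=\{1,\ldots,N\}$, every $\ell_\alpha(\vec v)=0$ individually, and again the preceding lemma supplies the witnesses or confirms that no non-degenerate solutions exist.

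The main obstacle is obtaining a uniform bound $d$ on the complexity of the $r_{\alpha\beta}$ so the partition of cases is finite. I would argue that any $\mathbb{Q}(U)$-linear dependence among $N$ elements of $K$ can be witnessed by coefficients with controlled denominator complexity, via Cramer's rule applied inside $\mathbb{Q}(U)$: once a basis is chosen, the $r_{\alpha\beta}$ are determined by inverting an $|T|\times|T|$ matrix whose entries lie in the $\mathbb{Q}$-linear span of the $m_\gamma(\vec x)$, yielding a bound $d$ depending only on $N$ and the integer expansions of the $t_i$. Combined with the polynomial closure of $\mathbb{Q}_{\leq d}(U)$ under sums and products noted before the lemma, this ensures that combining the resulting formulas over all finitely many cases produces a single controlled formula of the required shape.
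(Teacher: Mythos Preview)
Your overall strategy is the right one, and it parallels the paper's: fix a ``basis'' over $\mathbb{Q}(U)$, express the remaining elements as $\mathbb{Q}_{\leq d}(U)$-combinations of basis elements via $U$-witnesses, and reduce to the previous lemma. The paper takes the basis among the $t'_i=t_i/t_0$ rather than among the monomials $m_\alpha(\vec x)$, which streamlines the bookkeeping (the resulting system is one inhomogeneous equation $\sum_i a_{i0}v_i=1$ plus homogeneous constraints $\sum_i a_{ij}v_i=0$, and the previous lemma is applied only to the former, with the $m$ in the case index $(S,m)$ recording how many of its solutions also satisfy the latter), but this is not the essential difference.

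The genuine gap is in your treatment of independence. You have case $(T,R)$ assert that $\{m_\beta(\vec x):\beta\in T\}$ is $\mathbb{Q}(U)$-linearly independent, but this is an infinitary condition: it quantifies over \emph{all} of $\mathbb{Q}(U)$, hence over arbitrarily long sums of elements of $U$, and cannot be written as a single first-order formula with finitely many $U$-witnesses. Your proposed fix via Cramer's rule does not work: the $r_{\alpha\beta}$ are not determined by inverting any matrix with entries of bounded complexity. Concretely, if $m_1(\vec x)=r\cdot m_2(\vec x)$ for some $r\in\mathbb{Q}(U)$, then $r$ depends on $\vec x$ and may lie in $\mathbb{Q}_{\leq D}(U)$ only for arbitrarily large $D$; there is no uniform bound depending only on $N$ and the integer coefficients of the $t_i$.

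The paper's solution is the metastability device discussed in the paragraph following the proof. Rather than asserting true independence, the case for $S$ asserts only $\theta_{S,d_{|S|},d_{|S|-1}}$: that $S$ spans with coefficients in $\mathbb{Q}_{\leq d_{|S|}}(U)$ and has no dependence with coefficients in $\mathbb{Q}_{\leq d_{|S|-1}}(U)$, for a carefully chosen tower $d_{n+1}=1$, $d_{i-1}=2n^2d_i^{2n}$. The tower is calibrated so that (i) every $\vec x$ falls into some case, since failure of independence at level $d_{|S|-1}$ lets one pass to a smaller $S'$ that still spans at level $d_{|S'|}$; and (ii) independence at level $d_{|S|-1}$ is just strong enough for the reduction step: if some $\sum_i a_{ij_0}v_i\neq 0$ with $j_0\in S\setminus\{0\}$, one can solve for $t'_{j_0}$ in terms of the others with coefficients landing in $\mathbb{Q}_{\leq 2n^2d_{|S|}^{2n}}(U)=\mathbb{Q}_{\leq d_{|S|-1}}(U)$, contradicting the assumed bounded independence. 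This calibration, not a uniform bound on the $r_{\alpha\beta}$, is the missing idea in your argument.
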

\begin{proof}


  We will have an element $-1$ in $J$ and take $\eta^+_{-1}$ to be $t_0=0$, in which case $\mathrm{Mann}(\{t_i\},\cdot)$ is trivial, so we focus on the cases where $t_0\neq 0$.

  Consider a subset $S\subseteq\{0,1,\ldots,n\}$ containing $0$. For $0\leq i\leq n$, let $t'_i$ be $t_i/t_0$.  We wish to express that $\{t'_i\}_{i\in S}$ forms a basis for the space generated by the $t'_i$ over $\mathbb{Q}(U)$. We cannot write this down as a single formula in general, because elements of $\mathbb{Q}(U)$ are arbitrary sums of elements of $U$.

Instead, for any bounds $d,d'$, there is a formula $\theta_{S,d,d'}$ expressing that $\{t'_i\}_{i\in S}$ is nearly a basis, in the sense that:
  \begin{itemize}
  \item each $t'_j$ with $j\not\in S$ is a linear combination of $\{t'_i\}_{i\in S}$ with coefficients from $\mathbb{Q}_{\leq d}(U)$, and
  \item the equation $\sum_{i\in S}t'_iy_i=0$ has no solutions with coefficients from $\mathbb{Q}_{\leq d'}(U)$.
  \end{itemize}

  Fix the sequence $d_{n+1}=1$, $d_{i-1}=2n^2d_{i}^{2n}$. (The reasons for this choice will become apparent later.) We will define bounds $r_S$ for each $S$ later, and take the set $J$ to contain elements which are pairs $(S,m)$ where $S\subseteq\{0,1,\ldots,n\}$ with $0\in S$ and $m\leq r_S$. Fix an ordering of the sets $S$. For every $S$, $\exists^U\vec y\,\eta^+_{S,m}\wedge\forall^U\vec y\,\eta^-_{S,m}$ will imply that $S$ is least in this ordering so that $\theta_{S,d_{|S|},d_{|S|-1}}$ holds.

  We now work in some case $(S,m)$, so $\theta_{S,d_{|S|},d_{|S|-1}}$ holds and there are terms $a_{ij}$, representing elements of $\mathbb{Q}_{\leq d_{|S|}}(U)$, which can be expressed using the $\vec j$ so that, for all $i\not\in S$ and $j\in S$, $t'_i=\sum_{j\in S}a_{ij}t'_j$. For $S=\{0\}$, we will have $r_S=0$, so the only value of $m$ will be $0$, and we will take $\eta^+_{\{0\},0},\eta^-_{\{0\},0}$ to say that not only is $\{0\}$ least so that $\theta_{S,d_1,d_0}$ holds, but also that $\mathrm{Mann}(\{a_{i0}\}, \{w_{ik}\}_{i\leq n,k\leq b})$ with $b$ given by the previous lemma. We then take take $w^{\{0\},0}_{ik}=w_{ik}$.

Suppose $|S|>1$. In addition to the $a_{ij}$ fixed above with $i\not\in S$, for $i\in S$ and $j\in S$, we take $a_{ij}$ to be $1$ if $i=j$ and $0$ otherwise. This means that we have $t'_i=\sum_{j\in S}a_{ij}t'_j$ for all $i\in\{0,1,\ldots,n\}$.

  We next show that a solution to $\sum_{i\leq n}t'_iv_i=1$ in $U$ is equivalent to a simultaneous solution to $\sum_{i\leq n}a_{i0}v_i=1$ and, for $j\in S$, $\sum_{i\leq n}a_{ij}v_i=0$.

  The right to left direction is immediate: if we assume $\sum_{i\leq n}a_{i0}v_i=1$ and, for $j\in S$,  $\sum_{i\leq n}a_{ij}v_i=0$ then we have
  \[\sum_{i\leq n}t'_iv_i=\sum_{i\leq n}\sum_{j\in S}a_{ij}t'_jv_i=\sum_{j\in S}\sum_{i\leq n}a_{ij}t'_jv_j=1.\]

  For the left to right direction, suppose $\sum_{i\leq n}t'_iv_i=1$. Then, replacing $t'_i$ by $\sum_{j\in S}a_{ij}t'_j$, we have $\sum_{j\in S}\sum_{i\leq n} a_{ij}v_i=1$. Suppose for a contradiction that, for some $j_0\in S\setminus\{0\}$, $\sum_{i\leq n}a_{ij_0}v_i\neq 0$. Then we have
  \[t'_{j_0}=1-\sum_{j\in S\setminus\{j_0\}}t'_j\frac{\sum_{i\leq n} a_{ij}v_i}{\sum_{i\leq n}a_{ij_0}v_i}.\]
  Taking $b_0=1-\frac{\sum_{i\leq n} a_{i0}v_i}{\sum_{i\leq n}a_{ij_0}v_i}$ and, for $j\in S\setminus\{0,j_0\}$, $b_j=\frac{\sum_{i\leq n} a_{ij}v_i}{\sum_{i\leq n}a_{ij_0}v_i}$, observe that $b_0$ and $b_j$ belong to $\mathbb{Q}_{\leq 2n^2d_{|S|}^{2n}}(U)=\mathbb{Q}_{\leq d_{|S|-1}}(U)$, which contradicts $\theta_{S,d_{|S|},d_{|S|-1}}$.

  Therefore $\sum_{i\leq n}a_{ij}v_i= 0$ for each $j\in S\setminus\{0\}$, and therefore $\sum_{i\leq n}a_{i0}v_i=1$.

  We let $r_S$ be the $b$ given by the previous lemma for $\sum_{i\leq n}a_{i0}v_i=1$. We take $\eta^+_{S,m},\eta^-_{S,m}$ to say that, in addition to $S$ being least so that $\theta_{S,d_{|S|},d_{|S|-1}}$ holds, that additionally $\mathrm{Mann}(\{a_{i0}\},\{w_{ik}\}_{i\leq n,k\leq b})$ holds, and that, for any $k\leq b$, we have $\sum_{i\leq n}a_{ij}y_{ik}=0$ for all $j$ if and only if $k\leq m$. Then we take $w^{S,m}_{ik}=w_{ik}$ for $k\leq m$, so $\mathrm{Mann}(\{t_i\},\{w^{S,m}_{ik}\}_{k\leq m})$ holds.

\end{proof}

The stratification of $\mathbb{Q}(U)$ is a pattern that occurs often in proof mining. Implicit in the preceding lemma is the idea that the quantitative version of ``a basis for $S$ over $\mathbb{Q}(U)$ exists'' is ``for any function $F:\mathbb{N}\rightarrow\mathbb{N}$ there is a $d$ (depending on $F$ and $|S|$) so that there are linear combinations $\{t'_i\}$ of elements of $S$ with coefficients from $\mathbb{Q}_{\leq d}(U)$ so that $\sum_i t'_iy_i=0$ has no solutions with coefficients from $\mathbb{Q}_{\leq F(d)}(U)$''. Then any particular quantitative use of a basis requires this for a specific $F$ which depends on how the basis gets used. This pattern is sometimes called \emph{metastability} \cite{MR3111912,MR2550151,iovino_duenez,MR3141811,tao08Norm}, where the particular bound $F$ on the fragment depends on the specific way the basis will be used.

\subsection{Quantifier Elimination}

The first thing we need to do is show that if an element $u$ from $U$ is algebraically dependent over $\vec x$ then it is already multiplicatively dependent. The argument depends crucially on which variables from $\vec x$ are in $U$ and which are not, so the first lemma assumes we are given, in the form of the set $D$, which elements are supposed to be in $U$. In the next lemma, we will take a disjunction over all choices of $D$.


\begin{lemma}[See \cite{MR2235481}, Lemma 5.12]
  Let $\vec x$ be variables, let $D\subseteq\{\vec x\}$, and consider an equation
\[\sum_{r\leq d}\sum_{i\leq n_r} s_{r,i}t_{r,i}u^r=0\]
  where:
  \begin{itemize}
  \item each $s_{i,r}$ is a monomial containing only variables in $D$, and
  \item each $t_{i,r}$ is a monomial containing only variables not in $D$.
  \end{itemize}
Then $\RCFU$ proves a controlled formula
 \[\forall\vec x\ \bigotimes_{j\in J}\left(\eta^{\pm}_j(\vec x,\vec y^j)\rightarrow\bigwedge_{v\in D}\neg Uv\wedge\bigwedge_{v\not\in D}Uv\rightarrow(\forall^Uu\,\sum_{r\leq d}\sum_{i\leq n_r}s_{r,i}t_{r,i}u^r=0\leftrightarrow\psi_j)\right)\]
 where each $\psi_j$ is a positive Boolean combination of equalities and inequalities in which the only atomic formulas involving $u$ have the form $tu^m=s$ where $s,t$ are monomials.


\end{lemma}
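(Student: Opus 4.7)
The plan is to induct on $N = \sum_{r} n_r$, the total number of monomials. The base case $N \leq 1$ is straightforward: when $N = 0$ the equation is $0 = 0$, and when $N = 1$ we have $s \cdot t \cdot u^r = 0$ with the $U$-monomial $t \cdot u^r$ nonzero (since $U$ is a multiplicative group of positive elements), so the equation is equivalent to $s = 0$, a formula not involving $u$.

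For the inductive step, I would work under the case hypothesis $\bigwedge_{v \in D} \neg Uv \wedge \bigwedge_{v \notin D} Uv$, which guarantees that every $t_{r,i}$ lies in $U$ and, for $u \in U$, so does $u^r$. I fix an index $(r_0, i_0)$ with $r_0 = \min\{r : n_r > 0\}$. A first case split on whether $s_{r_0,i_0} = 0$ (a formula in $\vec x$ alone, so admissible in $\eta^\pm$) either deletes that monomial (and invokes the inductive hypothesis on the smaller equation) or lets me divide through by $v_{r_0, i_0} := t_{r_0, i_0} u^{r_0} \in U$ and recast the equation as the Mann-style equation
\[
\sum_{(r,i) \neq (r_0,i_0)} s_{r,i}\, y_{r,i} = -s_{r_0,i_0},
\qquad y_{r,i} := (t_{r,i}/t_{r_0,i_0})\, u^{r-r_0} \in U,
\]
to which I apply Lemma~\ref{thm:general_mann} (with coefficients the $s_{r,i}$ and right-hand side $-s_{r_0,i_0}$). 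This produces a controlled formula whose cases $j$ list, via $U$-witnesses, the complete finite set $\{w^j_{(r,i),k}\}_k$ of non-degenerate solutions.

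In each such Mann case, for fixed $u$ the original equation holds iff the tuple $(y_{r,i})$ is a solution, which splits into two subcases. \emph{Non-degenerate}: $(y_{r,i})$ equals some $(w^j_{(r,i),k})$, which after clearing denominators yields the conjunction $\bigwedge_{(r,i)} t_{r,i}\, u^{r-r_0} = t_{r_0,i_0}\, w^j_{(r,i),k}$---atoms already in the required form $tu^m = s$, with $m = r - r_0 \geq 0$ by the choice of $r_0$. \emph{Degenerate}: there is a proper nonempty $S \subsetneq I := \{(r,i) \neq (r_0,i_0)\}$ such that, after multiplying back through by $v_{r_0,i_0}$, both sub-equations $\sum_{(r,i) \in S} s_{r,i} t_{r,i}\, u^r = 0$ and $\sum_{(r,i) \in (I\setminus S) \cup \{(r_0,i_0)\}} s_{r,i} t_{r,i}\, u^r = 0$ hold; each has strictly fewer than $N$ monomials and the same syntactic shape, so the inductive hypothesis supplies a controlled formula with $u$-atoms already in the required form.

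All the ingredients are then combined using the fact that conjunctions and disjunctions of controlled formulas are again controlled, yielding a single controlled formula of the desired kind. The main obstacle I expect is the bookkeeping: each application of Lemma~\ref{thm:general_mann}, each recursive call, and the disjunction over subsets $S$ contribute their own partition of cases and witness tuples, and these must be spliced together via a product-of-cases construction analogous to the one in Lemma~\ref{thm:replace_var}, with care taken to preserve rigidity and to keep the witnesses occurring in each $\psi_j$ disjoint from those in the corresponding $\eta^\pm_j$.
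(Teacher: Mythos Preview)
Your inductive approach is correct and yields the same result, but it is organized differently from the paper's proof. The paper avoids induction entirely: it fixes upfront the full index set $I^*=\{(r,i)\}$, applies Lemma~\ref{thm:general_mann} simultaneously to \emph{every} nonempty subset $I\subseteq I^*$ (with its own distinguished pair $(r_I,i_I)$), and takes the case set $J$ to be the product $\prod_I J_I$. The formula $\psi_f$ then asserts directly that $I^*$ admits a partition into blocks $I_p$, each of which either has all $s_{r,i}$ vanishing or matches one of the listed Mann solutions for that block. Both directions of the equivalence are then checked in one pass.

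Your recursion, when fully unfolded, produces exactly these partitions: each time you hit the degenerate branch and split on a subset $S$, you are peeling off one block of the eventual partition, and the base cases $s_{r_0,i_0}=0$ correspond to the paper's singleton blocks with vanishing coefficient. So the two arguments are really the same decomposition viewed top-down versus bottom-up. The paper's version has the advantage that the case structure and the witness tuples are described once and for all, sidestepping the splicing-of-controlled-formulas bookkeeping you flag at the end; your version has the advantage that the correctness of each step is more local and the induction makes the termination transparent.
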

\begin{proof}
The idea is this: if we have a potential solution to the equation $\sum_{r,i}s_{r,i}t_{r,i}u^r=0$, we would hope to take some $r_0,i_0$ so that $s_{r_0,i_0}t_{r_0,i_0}\neq 0$ and with $r_0$ minimal so this happens, and then we could rearrange our equation to $\sum_{r,i\neq r_0,i_0}s_{r,i}\frac{t_{r,i}}{t_{r_0,i_0}}u^{r-r_0}=-s_{r_0,i_0}$. Then the part of this in $U$, 
$\frac{t_{r,i}}{t_{r_0,i_0}}u^{r-r_0}$, is a solution to the equation $\sum_{r,i\neq r_0,i_0}s_{r,i}v_{i,r}=-s_{r_0,i_0}$. Since the Mann property gives us a finite list of non-degenerate solutions to this equation, we would hope to choose some other $r,i$ with $t_{r,i}$ non-zero and then have $\frac{t_{r,i}}{t_{r_0,i_0}}u^{r-r_0}=w_{r,i,k}$ for one of our finite list of possible solutions $w_{r,i,k}$.

The problem is that we might be dealing with a degenerate solution. With a degenerate solution, though, there should be some smaller set of pairs $I$ so that $\sum_{(r,i)\in I}s_{r,i}t_{r,i}u^r=0$ as well, so we could instead apply the same idea to this smaller equation. To complete this argument, it will be convenient to deal with all possible sets of pairs at once.
  
  Let $I^*$ be the collection of pairs $(r,i)$ with $r\leq d$, $i\leq n_r$. For each non-empty $I\subseteq I^*$, we can consider the equation $\sum_{(r,i)\in I}s_{r,i}t_{r,i}u^r=0$. Let $r_I$ be least so that there is an $i\leq n_{r_I}$ with $(r_I,i)\in I$, and fix some such value $i_I$. We can then consider the equation $\sum_{(r,i)\in I\setminus\{(r_I,i_I)\}}s_{r,i}v_{r,i}=-s_{r_I,i_I}$. We may apply Lemma \ref{thm:general_mann} to this equation, obtaining cases indexed by $J_I$ and a finite list of possible solutions for each $j\in J_I$.

  Let $J$ be the set of functions $f$ whose domain is non-empty subsets of $I^*$ and so that, for each $I$, $f(I)\in J_I$. We take $\eta^+_f$ to be $\bigwedge_I\eta^+_{f(I)}$ and $\eta^-_f$ to be $\bigwedge_I\eta^-_{f(I)}$.

  We now need to define the formulas $\psi_f$. We will take $\psi_f$ to be the formula stating that  there is a partition $I^*=\bigcup_{p<m}I_p$ into non-empty sets such that, for each $p<m$, either:
  \begin{itemize}
  \item $\sum_{(r,i)\in I_p}s_{r,i}t_{r,i}=0$, or
  \item one of the solutions $\{w_{r,i,k}\}_{(r,i)\in I_p\setminus\{(r_{I_p},i_{I_p})\}}$ to the equation
    \[\sum_{(r,i)\in I_p\setminus\{(r_{I_p},i_{I_p})\}}s_{r,i}v_{r,i}=-s_{r_{I_p},i_{I_p}}\]
    has the property that $t_{r,i}u^{r-r_{I_p}}=a_{r,i,k}t_{r_{I_p},i_{I_p}}$ for all $(r,i)\in I_p\setminus\{(r_{I_p},i_{I_p})\}$.
  \end{itemize}
  This can be expressed by a formula of the desired kind: a disjunction, over all possible partitions and assignments of partitions to the two cases, of a Boolean combination of equalities and the only equalities containing $u$ appear positively and have the specified form; replacing negated equalities with a disjunction of inequalities gives the formula we need, so it suffices to establish that this equivalence holds.

  We claim that, under the appropriate assumptions, $\sum_{r,i}s_{r,i}t_{r,i}u^r=0$ holds if and only if $\psi_f$ holds. For the left-to-right direction, suppose that $\sum_{r,i}s_{r,i}t_{r,i}u^r=0$. We can successively choose pairwise disjoint minimal non-empty sets $I_p$ such that $\sum_{(r,i)\in I_j}s_{r,i}t_{r,i}u^r=0$. For each such $I_p$, either $\sum_{(r,i)\in I_p}s_{r,i}t_{r,i}=0$ or $s_{r_{I_p},i_{I_p}}\neq 0$ (because $I_p$ is minimal). In the latter case, $\{\frac{t_{r,i}u^{r-r_{I_p}}}{t_{r_{I_p},i_{I_p}}}\}$ is a non-degenerate solution to $\sum_{(r,i)\in I_p\setminus\{(r_{I_p},i_{I_p})\}}s_{r,i}v_{r,i}=-s_{r_{I_p},i_{I_p}}$ (because the minimality of $I_p$ ensures non-degeneracy). Therefore $t_{r,I}u^{r-r_{I_p}}=w_{r,i,k}t_{r_{I_p},i_{I_p}}$ for one of the solutions $\{w_{r,i,k}\}$ to this equation.

For the right-to-left direction, suppose we are given $I=\bigcup_{p<m}I_p$ and, for each $p<m$ with $\sum_{(r,i)\in I_p}s_{r,i}t_{r,i}\neq 0$, solutions $\{w_{r,i,k}\}$ so that $\sum_{(r,i)\in I_p\setminus\{(r_{I_p},i_{I_p})\}}s_{r,i}w_{r,i,k}=-s_{r_{I_p},i_{I_p}}$ and $t_{r,i}u^{r-r_{I_p}}=w_{r,i,k}t_{r_{I_p},i_{I_p}}$. Let $P$ be the set of $p$ with $\sum_{(r,i)\in I_p}s_{r,i}t_{r,i}\neq 0$. Then we have
 \begin{align*}
   \sum_{(r,i)}s_{r,i}t_{r,i}u^r
   &=\sum_{p}\sum_{(r,i)\in I_p}s_{r,i}t_{r,i}u^r\\
   &=\sum_{p\in P} s_{r_{I_p},i_{I_p}}t_{r_{I_p},i_{I_p}}u^{r_{I_p}}+\sum_{(r,i)\in I_p\setminus\{(r_{I_p},i_{i_p})\}}s_{r,i}t_{r,i}u^{r_{I_p}}\frac{w_{r,i,k}t_{r_{I_p},i_{I_p}}}{t_{r,i}}\\
   &=\sum_{p\in P} s_{r_{I_p},i_{I_p}}t_{r_{I_p},i_{I_p}}u^{r_{I_p}}+u^{r_{I_p}}t_{r_{I_p},i_{I_p}}\sum_{(r,i)\in I_p\setminus\{(r_{I_p},i_{i_p})\}}s_{r,i}w_{r,i,k}\\
   &=\sum_{p\in P} s_{r_{I_p},i_{I_p}}t_{r_{I_p},i_{I_p}}u^{r_{I_p}}+u^{r_{I_p}}t_{r_{I_p},i_{I_p}}(-s_{r_{I_p},i_{I_p}})\\
   &=0.
 \end{align*}
\end{proof}


\begin{lemma}
  Let $\vec x$ variables and let $t_i$ be terms in $\vec x$. Then $\RCFU$ proves a controlled formula
  \[\forall \vec x(\bigoplus_{j\in J}\eta^{\pm}_j(\vec x,\vec y^j)\rightarrow\forall^Uu\,\left(\sum_i t_iu^i=0\leftrightarrow\psi_j(\vec x,\vec y^j,w)\right))\]
  where each $\psi_j$ is a positive Boolean combination of atomic formulas and $\neg Ut$ in which the only atomic formulas involving $u$ have the form $tu^m=s$ where $s,t$ are monomials and each formula $Ut$ or $\neg Ut$ has $t$ a single variable.

\end{lemma}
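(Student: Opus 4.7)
The strategy is to apply the previous lemma once for each subset $D \subseteq \{\vec x\}$, treating $D$ as the intended set of variables failing $U$, and to glue the resulting controlled formulas together by taking a product of partitions of cases.

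Fix $D$ first. Expand each $t_i$ as a sum of monomials in $\vec x$; each monomial factors uniquely as $s\cdot t$ with $s$ a monomial in the variables of $D$ and $t$ a monomial in the variables not in $D$. This puts $\sum_i t_iu^i$ in the form $\sum_{r,i}s_{r,i}t_{r,i}u^r$ required by the previous lemma, which then yields a controlled formula
\[
\bigoplus_{j \in J_D}\eta^{\pm}_{D, j}(\vec x, \vec y^{D,j}) \rightarrow\left( \bigwedge_{v \in D}\neg Uv \wedge \bigwedge_{v \notin D} Uv \rightarrow \bigl( \forall^U u\, \sum_i t_i u^i = 0 \leftrightarrow \psi_{D,j} \bigr)\right),
\]
with each $\psi_{D, j}$ a positive Boolean combination of atomic formulas whose only $u$-containing atomic subformulas have the form $tu^m = s$ for monomials $s, t$.

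To combine these, I take $J = \prod_D J_D$ and, for each $f \in J$, set $\eta^{\pm}_f = \bigwedge_D \eta^{\pm}_{D, f(D)}$, renaming witness variables so that the blocks $\vec y^{D, f(D)}$ across different $D$ are pairwise disjoint; call the joint tuple $\vec y^f$. Since each $\eta^{\pm}_{D, f(D)}$ mentions only its own block, existentials and universals distribute over $\bigwedge_D$, so the combined case condition $\exists^U \vec y^f\, \eta^+_f \wedge \forall^U \vec y^f\, \eta^-_f$ reduces to $\bigwedge_D(\exists^U \vec y^{D,f(D)}\, \eta^+_{D, f(D)} \wedge \forall^U \vec y^{D, f(D)}\, \eta^-_{D, f(D)})$. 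Each $\{J_D\}$ is a partition of cases, so the product $\{J\}$ is too, and each $\eta^{\pm}_f$ remains a Boolean combination of atomic formulas in $\mathcal{L}_<(\{c_\gamma\}_{\gamma \in \Gamma})$.

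Finally, take
\[
\psi_f = \bigvee_{D \subseteq \{\vec x\}}\left( \bigwedge_{v \in D}\neg Uv \wedge \bigwedge_{v \notin D} Uv \right) \wedge \psi_{D, f(D)}.
\]
For every $\vec x$ there is a unique $D^* \subseteq \{\vec x\}$ satisfying $\bigwedge_{v \in D^*}\neg Uv \wedge \bigwedge_{v \notin D^*} Uv$, so on the case indexed by $f$ the first step applied to $D^*$ gives $\forall^U u\,\sum_i t_i u^i = 0 \leftrightarrow \psi_{D^*, f(D^*)} \leftrightarrow \psi_f$. Rigidity of $\psi_f$ under $\eta^{\pm}_f$ inherits from rigidity of each $\psi_{D, f(D)}$ under $\eta^{\pm}_{D, f(D)}$, and $\psi_f$ is visibly a positive Boolean combination of the $Uv$, the $\neg Uv$ (both with $v$ a single variable), and the formulas $\psi_{D, f(D)}$, so it has exactly the form required. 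The main bookkeeping obstacle is in this gluing step: keeping witness variables disjoint across different $D$ is what makes the product of partitions a genuine partition of cases with case-conditions still in the ordered-field language, while the $U$-predicates are pushed entirely into the $\psi_f$.
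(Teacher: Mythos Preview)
Your proposal is correct and follows essentially the same approach as the paper: apply the previous lemma once for each $D\subseteq\{\vec x\}$, take $J=\prod_D J_D$ with $\eta^{\pm}_f=\bigwedge_D\eta^{\pm}_{D,f(D)}$, and set $\psi_f=\bigvee_D(\bigwedge_{v\in D}\neg Uv\wedge\bigwedge_{v\notin D}Uv)\wedge\psi_{D,f(D)}$. Your version is in fact more detailed than the paper's, spelling out the variable-disjointness bookkeeping, the verification that the product is a partition of cases, and the rigidity check, all of which the paper leaves implicit.
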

\begin{proof}
  For each $D\subseteq\{\vec x\}$, we may expand  the polynomial $\sum_i t_iu^i$ into a sum of monomials and then, based on the choice of $D$, split each monomial into a product of powers of variables not in $U$ and those in $U$---that is, as in the previous lemma, write it in the form $\sum_r\sum_i s_{r,i}t_{r,i}u^r$ where $s_{r,i}$ is a product of variables not in $U$ and $t_{r,i}$ is a product of variables in $U$. We may apply the previous lemma, obtaining $J_D,\eta^{\pm}_{D,j},\psi_{D,j}$. Let $J$ consist of all functions whose domain is the subsets of $\{\vec x\}$ and with $f(D)\in J_D$ for all $D$. Define $\eta^{\pm}_f$ to be $\bigwedge_D\eta^{\pm}_{D,f(D)}$ for all $D$ and $\psi_f$ to be
  \[\bigvee_D (\bigwedge_{v\in D}\neg Uv\wedge_{v\not\in D}Uv)\wedge\psi_{D,f(D)}.\]




\end{proof}

We can finally eliminate an existential quantifier, albeit in a very restricted setting---a quantifier restricted to $U$ over an $\mathcal{L}_<(\{c_\gamma\}_{\gamma\in\Gamma})$ formula. We will then build on this special case.

\begin{definition}
    A \emph{guarded formula} is a formula of the form $\exists^Uu\,\phi'$ where $\phi'$ is a Boolean combination of equalities and inequalities.
  \end{definition}

In the model theoretic argument, van den Dries and G\"unaydin show that certain isomorphisms between models can be extended. The first case they need to consider is extending a model by an element of $U$. In doing so, they need to keep track of roots: if we add an element $u$ in $U$ whose positive square root is also in $U$, it needs to map to an element whose positive square root is also in $U$, and more generally for other roots, and with multiples of $u$ by other elements of $U$ as well.

To capture this, the formulas we work with through most of our argument will include ``root formulas'', which will express that terms are powers of elements of $U$.

\begin{definition}

  A term $t$ is a \emph{monomial} if it does not contain $+$ (and so it built from variables and constant symbols using $\cdot$).

  A \emph{root formula} is a formula of the form $\exists^Uu\, t_nu^n+t_0=0$ where $t_0,t_n$ are monomials.
\end{definition}
We call this a root formula because it states that the $n$-th root of $-t_0/t_n$ is in $U$. Note that we place the further requirement that $t_0$ and $t_n$ are monomials---typically $t_0,t_n$ will be products of elements known to be in $U$.

\begin{definition}
  A \emph{permitted formula} in $\vec x$ is a Boolean combination of:
  \begin{itemize}
  \item atomic formulas in $\mathcal{L}_<(\{c_\gamma\}_{\gamma\in\Gamma})$,
  \item root formulas.
  \end{itemize}
\end{definition}

\begin{lemma}\label{thm:equiv_guarded}
  Let $\vec x$ variables and let $\phi$ be a guarded formula. Then $\RCFU$ proves a controlled formula
  \[\forall \vec x(\bigoplus_{j\in J}\eta^{\pm}_j(\vec x,\vec y^j)\rightarrow\forall^Uu\,\left(\sum_i t_iu^i=0\leftrightarrow\psi_j(\vec x,\vec y^j,w)\right))\]
  where each $\psi_j$ is a permitted formula.

\end{lemma}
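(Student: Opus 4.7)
The conclusion as written reuses the body $\forall^U u(\sum_i t_iu^i=0\leftrightarrow\psi_j)$ of the preceding lemma despite the hypothesis changing from polynomial coefficients $t_i$ to a guarded formula $\phi$; given the lemma's name and its role in the forthcoming quantifier elimination, I read the intended conclusion as asserting that $\phi\leftrightarrow\psi_j$ holds in each case of the controlled partition. My plan is to normalize $\phi$, reduce polynomial equalities via the preceding lemma, and then split the remaining inequality analysis into an algebraic and a transcendental subcase.

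Write $\phi=\exists^U u\,\phi'(\vec x,u)$ with $\phi'$ a Boolean combination of equalities and strict inequalities in $\mathcal{L}_<(\{c_\gamma\}_{\gamma\in\Gamma})$. The first step is to put $\phi'$ in disjunctive normal form and distribute $\exists^U u$ over the disjunction; since controlled formulas are closed under disjunction and permitted formulas are closed under Boolean combinations, it suffices to treat $\exists^U u\,\bigwedge_\ell \sigma_\ell(\vec x,u)$ where each $\sigma_\ell$ is a polynomial equality, disequality, or strict inequality in $u$. For every equality or disequality conjunct, apply the preceding lemma to obtain a controlled partition in which that conjunct is equivalent to a Boolean combination whose only $u$-containing atomic formulas have the monomial-equality form $tu^m=s$ with $s,t$ monomials. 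Combining these partitions via products of cases, as in the proof of Lemma \ref{thm:replace_var}, yields a single refined controlled partition $\{\eta_j^\pm\}$.

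Within each case, split into two subcases. (i) If some conjunct $tu^m=s$ survives, then $u$ is pinned down as the positive $m$-th root of $s/t$; under $\exists^U u$ this is exactly a root formula in the sense of the definition, and each remaining strict inequality $q(u,\vec x)>0$ is handled by Euclidean division modulo $tu^m-s$ followed by the standard sign analysis for algebraic elements of a real closed field, producing an atomic formula in $\mathcal{L}_<(\{c_\gamma\}_{\gamma\in\Gamma})$ over $\vec x$ and $\vec y^j$. (ii) If no conjunct of that form survives, then $u$ ranges over positive elements of $U$ subject only to polynomial strict inequalities; density of $U$ in $\mathbb{R}^{>0}$ gives $\exists^U u\,\bigwedge_k q_k(u,\vec x)>0$ iff $\exists u\in\mathbb{R}^{>0}\,\bigwedge_k q_k(u,\vec x)>0$, and the right-hand side is an $\mathcal{L}_<(\{c_\gamma\}_{\gamma\in\Gamma})$-formula by quantifier elimination for \RCF.

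The main obstacle lies in subcase (i): determining the sign of a polynomial in $u$ at a prescribed algebraic point can introduce further sign conditions on auxiliary polynomials in $\vec x$, which in turn must be reduced via a further application of the preceding lemma to monomial equalities and thence to additional root formulas. Ensuring that the resulting $\psi_j$ are permitted and rigid under the refined partition therefore requires iterating the construction; termination is guaranteed because each substitution strictly decreases the degree in $u$, and only finitely many auxiliary monomials can appear in any single iteration.
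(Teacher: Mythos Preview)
Your overall strategy matches the paper's: normalize $\phi'$, apply the preceding lemma to the polynomial equalities, combine the resulting partitions into a single controlled formula, and then split into the ``$u$ is pinned down by a monomial equality'' subcase versus the ``only inequalities in $u$ remain'' subcase (handled by density plus RCF quantifier elimination). The core architecture is correct.

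Where you diverge from the paper is in subcase~(i), and this is where your ``main obstacle'' paragraph goes astray. The paper does not do Euclidean division or sign analysis by hand. Instead it observes that since $u\in U$ forces $u>0$, the equation $tu^m=s$ has at most one positive real solution, and therefore
\[
\exists^U u\,\bigl(tu^m=s\wedge\phi'''\bigr)\ \Longleftrightarrow\ \bigl(\exists^U u\,\,tu^m=s\bigr)\ \wedge\ \bigl(\exists u\,\,0<u\wedge tu^m=s\wedge\phi'''\bigr).
\]
The first conjunct is literally a root formula. The second is a sentence in $\mathcal{L}_<(\{c_\gamma\})$ with no $U$, so RCF quantifier elimination returns a quantifier-free $\mathcal{L}_<$-formula in $\vec x,\vec y^j$, which is permitted by definition. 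No iteration is needed: your fear that ``sign conditions on auxiliary polynomials in $\vec x$'' must be fed back through the preceding lemma is unfounded, because that lemma is only ever invoked to rewrite equalities \emph{containing $u$}, and once $u$ has been eliminated the output is already a Boolean combination of $\mathcal{L}_<$-atoms and root formulas. Your proposed iteration would terminate immediately with nothing to do.

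Two smaller points you glossed over. First, the paper replaces negated equalities by disjunctions of strict inequalities at the outset, so only positively occurring equalities need the preceding lemma; you apply it to disequalities as well, which is harmless but not what the paper does. Second, the output of the preceding lemma carries literals $Uv$ and $\neg Uv$ for single variables $v$; the paper explicitly rewrites these as the root formula $\exists^U w\,v=w$ (and its negation) to obtain a permitted formula, a step you omit.
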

\begin{proof}
  $\phi$ has the form $\exists^Uu\,\phi'$. Replace negated equalities $s\neq t$ in $\phi'$ with $s<t\vee t<s$, and similarly for negated inequalities.   Let $A$ be the set of equalities in $\phi'$. For each equality in $\phi'$, we may apply the previous lemma, obtaining some collection $\{J_a\}_{a\in A}$ and corresponding formulas $\eta^{\pm}_{a,j}, \psi_{a,j}$.

  Let $J$ consist of functions $f$ mapping each $a\in A$ to $f(a)\in J_a$, and for each $f\in J$, let $\eta^{\pm}_f$ be $\bigwedge_{a\in A}\eta^{\pm}_{a,f(a)}$.

  Given an $f$, under the assumptions $\eta^+_f(\vec x,\vec y_f)$ and $\forall^U\vec y_f\,\eta^-_f(\vec x,\vec y_f)$, $\phi'$ is equivalent to a positive Boolean combination of atomic formulas (including $Ut$) and $\neg Ut$ in which additionally:
  \begin{itemize}
  \item the only equalities involving $u$ have the form $tu^m=s$ where $s,t$ are monomials, and
  \item each $Ut$ or $\neg Ut$ has $t$ a single variable.
  \end{itemize}
  We write this formula in disjunctive normal form, and then distribute the existential and remove formulas which do not depend on $u$, so $\exists^Uu\,\phi'$ is equivalent to a formula $\bigvee_i\phi'_i\wedge\exists^Uu\,\phi''_i$ where each $\phi'_i$ and $\phi''_i$ is a conjunction of formulas as above, with $\phi''_i$ containing only atomic formulas containing $u$ and $\phi'_i$ containing all other conjuncts.

  In $\phi'_i$, we may replace $Ut$ with the root formula $\exists^Uv\,t=v$ (and $\neg Ut$ with its negation), so $\phi'_i$ is equivalent to a permitted formula. So it suffices to show that $\exists^Uu\,\phi''_i$ is equivalent to a permitted formula. If $\neg Uu$ appears in $\phi''_i$ then $\exists^Uu\,\phi''_i$ is equivalent to $\bot$, and $Uu$ is redundant, so we may assume $\phi''_i$ contains only equalities and inequalities.

  If $\phi''_i$ contains no equalities then the density of $U$ implies that $\exists^Uu\,\phi''_i$ is equivalent to $\exists u\,0<u\wedge\phi''_i$.

  So suppose there is an equality $tu^m=s$ in $\phi''_i$. Since $Ut$ implies $0<t$ and $s/t$ has a unique positive $m$-th root (if any), $\exists^Uy\,tu^m=s\wedge\phi'''_i$ is equivalent to $(\exists^Uu\,tu^m=s)\wedge\exists u\,(0<u\wedge \phi'''_i)$. Then quantifier elimination for $\RCF$ gives us a quantifier-free equivalent for $\exists u\,\phi'''_i$, giving us a permitted formula.
\end{proof}

As in the previous section, there is a dichotomy between adding algebraic and transcendental elements. The next lemma is the case of a quantifier over $U$ with a transcendental element: it shows that if $\exists^Uu\,\phi(\vec x,u)$ holds then either $u$ satisfies one of a finite list of non-trivial algebraic identities, or a permitted formula holds.

\begin{lemma}\label{thm:R_exists_U_free_elim}[See \cite{MR2235481}, Theorem 7.1, Case 1]
  Let $\vec x$ be variables and let $\phi(\vec x,w)$ be a permitted formula.
Then $\RCFU$ proves a controlled formula

  \[\forall\vec x(\bigoplus_{j\in J}\eta^{\pm}_j(\vec x,\vec y_j)\rightarrow\left((\exists^Uu\, \nu(\vec x,\vec y_j,u)\wedge \phi(\vec x,u))\leftrightarrow\psi_j(\vec x,\vec y_j)\right))\]
  where each $\psi_j$ is a permitted formula and $\nu$ is a conjunction of the form $\bigwedge_{i\leq m}\left(\sum_{i'\leq n_{i}}t_{i,i'}u^{i'}=0\rightarrow \bigwedge_{i'\leq n_{i,i'}}t_{i,i'}\neq 0\right)$.
\end{lemma}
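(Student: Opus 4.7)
The plan is to eliminate the existential quantifier by normalizing $\phi$, choosing $\nu$ to force $u$ to be transcendental over the polynomials appearing in $\phi$, and then using density of $U$ together with the multiplicative structure to reduce the residual existential to a permitted formula. First I would put $\phi$ in disjunctive normal form and distribute $\exists^Uu$ over the disjunction, so it suffices to handle a single conjunction of literals (the outer disjunction becomes a disjunction of controlled formulas, which I combine by refining partitions of cases as in the proof of Lemma~\ref{thm:replace_var}). I let $P$ be the finite set of polynomials in $u$ that appear either as the underlying polynomial of an atomic $\mathcal{L}_<(\{c_\gamma\}_{\gamma\in\Gamma})$-formula in $\phi$ or as a monomial coefficient $t_n$ or $t_0$ inside a root formula, and I take $\nu$ to be the conjunction over $p_i\in P$ of the implications ``$p_i(\vec x,\vec y_j,u)=0$ implies every coefficient of $p_i$ in $u$ is zero''. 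This has the syntactic form required by the lemma and forces $u$ to be transcendental over the parameters with respect to $P$.

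Under $\nu$, each equality conjunct $p(u)=0$ becomes equivalent to a $u$-free coefficient condition in $\vec x,\vec y_j,\{c_\gamma\}$ (and its negation to the dual), which I can extract from inside the existential. The remaining $u$-dependent conjuncts are strict inequalities on polynomials in $u$ and (possibly negated) root formulas where $u$ appears inside $t_n,t_0$. For each such root formula I would factor out the maximal common power of $u$ from $t_n$ and $t_0$, rewriting it in the form $\exists^U v\,(s_n v^n = u^d s_0)$ with $s_n,s_0$ monomials not involving $u$ and $d\in\mathbb{Z}$; this expresses that $u^d s_0/s_n$ is an $n$-th power in $U$. I would then build the partition of cases for the controlled formula by splitting on the sign pattern of the polynomials from the remaining inequalities at $u$, together with any auxiliary data needed to pin down the multiplicative type of $u$ modulo the relevant $n$-th powers (with the witnesses $\vec y_j$ serving as elements of $U$ that name this type).

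In each case, the sign pattern carves out an open subset of $\mathbb{R}^{>0}$ where $u$ may lie, and by Tarski's quantifier elimination for $\RCF$ the non-emptiness of this subset is an $\mathcal{L}_<$-condition on the parameters. For the multiplicative constraints, using that $U^d\cdot U^n = U^{\gcd(d,n)}$ as subgroups of $U$ (a Bezout argument, valid because $U$ is a group), the existence of some $u\in U$ with $u^d s_0/s_n\in U^n$ reduces to the $u$-free root formula $\exists^U w\,(w^{\gcd(d,n)} = s_0/s_n)$; several multiplicative conditions can be handled together by analyzing the joint multiplicative type. Density of $U$ and of its finite-index subgroups $U^k$ then ensures that within the open interval determined by the sign pattern, a transcendental witness meeting all constraints exists exactly when the extracted parameter conditions hold. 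I would take $\psi_j$ to be the conjunction of the coefficient conditions from the equalities, the $\mathcal{L}_<$-formula expressing sign-pattern realizability, and the $u$-free root formulas arising from the multiplicative reductions; each $\psi_j$ is then a permitted formula.

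The main obstacle is this simultaneous realization step: within one case of the controlled formula, one must verify that the sign pattern, every multiplicative constraint coming from the root formulas, and the transcendence demanded by $\nu$ can all be met by one and the same $u\in U$. This reduces to analyzing the intersection of finitely many cosets of subgroups $U^{k_i}$ inside a specified open interval, and it is here that the flexibility of the controlled-formula partition (and the choice of $\vec y_j$ within each case) really matters.
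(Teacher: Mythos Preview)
Your overall plan---normalize, choose $\nu$ to remove equalities involving $u$, then use density and the multiplicative structure---is the paper's plan. The divergence, and the gap, is exactly at what you flag as ``the main obstacle'': you try to handle the inequality constraints and the root-formula constraints simultaneously via a sign-pattern case split, and then leave the joint realization unresolved. The paper avoids this entirely by \emph{decoupling}. After $\nu$ removes equalities, a conjunct splits as $\phi_0(\vec x)\wedge\phi_1(\vec x,u)\wedge\phi_2(\vec x,u)$ with $\phi_1$ the inequalities and $\phi_2$ the (negated) root formulas. Density gives $\exists^Uu\,\phi_1\leftrightarrow\exists u{>}0\,\phi_1\leftrightarrow\phi_1'(\vec x)$ by $\RCF$. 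The point you are missing is that every root formula in $\phi_2$ is invariant under multiplying $u$ by any element of $\bigcap_{n\leq N}U^n$ (with $N$ the largest exponent occurring), and this set is still dense; so if some $u$ satisfies $\phi_2$ and the inequality region is nonempty, there is an $h$ in that intersection with $hu$ in the region. Hence $\exists^Uu\,(\phi_1\wedge\phi_2)$ is equivalent to $\phi_1'(\vec x)\wedge\exists^Uu\,\phi_2(\vec x,u)$, and no sign-pattern partition is needed.

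For $\exists^Uu\,\phi_2$ the paper does not pursue your per-formula Bezout reduction (which, as you note, does not obviously combine across several root formulas). Instead it raises all root formulas to a common exponent $m$ and observes that whether $u$ satisfies $\phi_2$ depends only on the image of $u$ in $U/U^m$ relative to the images of the monomials $s_i,t_i$. Those images generate a finite subgroup $T$, and a putative $u$ extends $T$ by at most a cyclic factor $\mathbb{Z}/n\mathbb{Z}$ with $n\mid m$. The controlled formula's cases record, for each divisor $n$ of $m$, how many independent $n$-cycles $U/U^m$ contains (the $\vec y_j$ witness generators); within each case one substitutes a finite explicit list of terms $r_i$---products of bounded powers of the $\vec y_j$ and the $s_i,t_i$---that exhaust the relevant cosets, so $\exists^Uu\,\phi_2$ becomes the permitted formula $\bigvee_i\phi_2(\vec x,\vec y,r_i)$. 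Your coset-intersection analysis is the single-constraint shadow of this; the enumeration is what makes the joint case go through without further work. (A minor point: including the root-formula monomials $t_n,t_0$ in $P$ is harmless but redundant, since they are monomials in $u$ and $u>0$; the paper's $\nu$ uses only the equalities.)
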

\begin{proof}
  Replacing negated equalities with a conjunction of inequalities, and similarly for negated inequalities, we may assume each atomic formula in $\phi$ appears positively.  We may write each equality in $\phi$ as a polynomial in $u$, $\sum_{i\leq d}t_iu^i=0$. The formula $\nu$ will be the conjunction of $\sum_{i\leq d}t_iu^i=0\rightarrow\bigwedge_{i\leq d}t_i=0$ for each equality in $\phi$, and therefore, under the assumption $\nu$, $\phi$ will be equivalent to a formula where no equality contains $u$.

  We may rearrange $\phi$ to be in disjunctive normal form and distribute the existential over the disjunction, so we may assume $\phi$ has the form $\phi_0(\vec x)\wedge\phi_1(\vec x,u)\wedge\phi_2(\vec x,u)$ where $\phi_0$ is a conjunction of atomic formulas not involving $u$, $\phi_1$ is a conjunction of inequalities, and $\phi_2$ is a conjunction of root formulas and negations of root formulas (since equalities do not depend on $u$).
  
  By the density of $U$, the inequalities $\exists^U u\,\phi_1(\vec x,u)$ are satisfiable in $U$ just if they are satisfiable in general by a positive element, and by quantifier elimination for RCF, $\exists u\,0<u\wedge\phi_1(\vec x,u)$ is equivalent to a quantifier-free formula in the language of ordered fields, $\phi'_1(\vec x)$.
  
  Further, since $U$ is dense, the $n$-th powers of elements of $U$ is also dense for each $n$. In particular, we may choose a bound $N$ sufficiently large and, in any positive interval, find an element $h$ of $U$ which is an $n$-th power in $U$ for each $n\leq N$. Therefore if $\phi'_1(\vec x)\wedge\phi_2(\vec x,u)$ holds then we may choose an $h$ in $U$ so that $\phi_1(\vec x,hu)\wedge\phi_2(\vec x,hu)$ holds.
  
  So it remains to find a quantifier-free equivalent for $\exists^Uu\,\phi_2(\vec x,\vec y,u)$. Each conjunct in $\phi_2$ has the form $\exists^Uw\,t_iw^{m_i}=s_iu^{k_i}$ or $\exists^Uw\,t_iu^{k_i}w^{m_i}=s_i$ or a negation of one of these.   Raising each side of these equations to a suitable power, we may assume that there is a single value $m=m_i$ for each $i$.

  Consider the quotient group $U/U^m$ in some model. Whether an element $u$ satisfies $\phi_2$ is determined by its image in this quotient relative to the images of the $s_i,t_i$.  The images of the various $s_i,t_i$ generate a subgroup of this group, $T$, and a hypothetical solution $u$ would generate an extension $T\otimes\mathbb{Z}_n$ for some $n$ a factor of $m$ (possibly $1$ if the image of $u$ is already in $T$).  So $\exists^Uu\, \phi_2(\vec x,\vec y,u)$ is equivalent to $\bigvee_{i\in I}\phi_2(\vec x,\vec y,r_i)$ if we can ensure that the terms $r_i$ range over elements of $U$ whose image includes every element of every group $T\otimes(\mathbb{Z}/n\mathbb{Z})$ which can be found in $U/U^m$. (Note that it is possible that $\phi_2(\vec x,\vec y,u)$ would be solved by an element with image in $T\oplus(\mathbb{Z}/n\mathbb{Z})$, but that there is no copy of $T\oplus(\mathbb{Z}/n\mathbb{Z})$ in $U/U^m$.)

  So let $M$ be the set of factors of $m$, let $d$ be larger than the maximum possible size of $T$, and let $J$ consist of all functions from $M$ to $\{0,\ldots,d\}$. Given $f\in J$, let $\eta^{\pm}_f$ be the formulas specifying that, for each $n\in M$, the image of $\vec y^f$ in $U/U^m$ includes a subgroup of the form $(\mathbb{Z}/n\mathbb{Z})^{f(n)}$, and if $f(n)<d$ then $U/U^m$ contains no subgroups of the form $(\mathbb{Z}/n\mathbb{Z})^{f(n)+1}$.

  Then we take the $r_i$ to enumerate all products of a power of some $\vec y^f_i$ up to $m$ with a product of powers of the $s_i,t_i$ up to $m$. The $r_i$ certainly enumerate $T$, and if $T\oplus(\mathbb{Z}/n\mathbb{Z})$ is contained in $U/U^m$ then the $r_i$ must contain some element generating a cycle of length $n$ and not contained in $T$, so in this case the $r_i$ enumerate $T\oplus (\mathbb{Z}/n\mathbb{Z})$ as well.  The formula $\bigvee_{i\in I}\phi_2(\vec x,\vec y,r_i)$ is a Boolean combination of formulas $\exists^Uw\,tw^{m}=sr^{k}$, and therefore permitted. 
\end{proof}

\begin{lemma}\label{thm:R_exists_U_bound_elim}
  Let $\vec x$ be variables, let $\phi(\vec x,u)$ be a permitted formula, and let $t_i$, $i\leq d$ be terms. Then $RCFU$ proves controlled formulas 
  \[\forall\vec x\,(\bigoplus_{j\in J}\eta^{\pm}_j(\vec x,\vec y_j)\rightarrow\left([\exists u\,(\sum_{i\leq d}t_iu^i=0\wedge\bigvee_i t_i\neq 0\wedge\phi(\vec x,u)]\leftrightarrow\psi_j(\vec x,\vec y_j)\right)\]
and
\[\forall\vec x\,(\bigoplus_{j\in J}\eta^{\pm}_j(\vec x,\vec y_j)\rightarrow\left([\exists^U u\,(\sum_{i\leq d}t_iu^i=0\wedge\bigvee_i t_i\neq 0\wedge\phi(\vec x,u)]\leftrightarrow\psi'_j(\vec x,\vec y_j)\right)\]
where the $\psi_j,\psi'_j$ are permitted formulas.
\end{lemma}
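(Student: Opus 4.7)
My plan is to enumerate the (at most $d$) roots of the polynomial $\sum_{i\le d}t_iu^i=0$, following the counting scheme of Lemma \ref{thm:controlled_root}. Both claims begin with a case-split on which coefficients $t_i$ vanish, so that within each case the actual degree $d'$ is known. The hypothesis $\bigvee_i t_i\neq 0$ together with $d'=0$ gives no roots (the statement is false), and otherwise the polynomial has at most $d'\ge 1$ roots. These vanishing conditions contribute $\mathcal{L}_<(\{c_\gamma\})$-atomic pieces to the $\eta^{\pm}_j$ of the output controlled formula.

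For the $\exists u$ claim, within each case I would introduce existential variables $z_1,\ldots,z_{d'}$ enumerating the distinct roots, exactly as in Lemma \ref{thm:controlled_root}, reducing the statement to $\bigvee_i \phi(\vec x,z_i)$. I then put $\phi(\vec x,z_i)$ into disjunctive normal form; each disjunct has the shape $A(\vec x,z_i)\wedge\bigwedge_m\rho_m\wedge\bigwedge_{m'}\neg\rho'_{m'}$ with $A$ a Boolean combination of $\mathcal{L}_<(\{c_\gamma\})$-atomics and the $\rho_m,\rho'_{m'}$ root formulas in $\vec x,z_i$. For positive root formulas $\rho_m=\exists^U v_m\,(s_mv_m^{n_m}+t_m=0)$ I would commute $\exists^U v_m$ past $\exists z_i$, pulling them out; quantifier elimination for $\RCF$ then eliminates the remaining $\exists z_i$ from the $\mathcal{L}_<$-matrix, and iterated application of Lemma \ref{thm:equiv_guarded} converts the resulting $\exists^U$-block into a controlled formula.

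For the $\exists^U u$ claim, I first apply the lemma immediately preceding Lemma \ref{thm:equiv_guarded}, which within cases rewrites the condition ``$u\in U$ is a root of $\sum t_iu^i$'' as a formula $\psi_j(\vec x,\vec y^j,u)$ whose $u$-involving atomic formulas are all of the form $tu^m=s$ with $t,s$ monomials. Conjoining with $\phi(\vec x,u)$, pulling out positive $\exists^U$ quantifiers from root formulas in $\phi$, and iterating Lemma \ref{thm:equiv_guarded} over the combined $\exists^U$-block produces the desired controlled formula with permitted $\psi'_j$.

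The main obstacle, common to both claims, is the negative root formulas $\neg\rho'_{m'}=\forall^U v'\,(s'v'^{n'}+t'\neq 0)$, whose $\forall^U v'$ cannot be commuted past the enclosing $\exists z_i$ or $\exists^U u$. My plan is to dispatch these by a further case-split on the image of the algebraic element $-t'(\vec x,z_i)/s'(\vec x,z_i)$ in the quotient $U/U^{n'}$, paralleling the analysis of Lemma \ref{thm:R_exists_U_free_elim}. Since $z_i$ is algebraic over $\vec x$ this expression is an algebraic element, and by the Mann property only finitely many cosets of $U^{n'}$ can be represented using the monomials appearing in $\phi$ together with the available witnesses $\vec y^j$. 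For each coset the negative root formula resolves to either a root formula or its negation as an atomic piece of $\psi_j$, and the case-split contributes further atomic conditions to the $\eta^{\pm}_j$.
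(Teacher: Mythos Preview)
Your overall structure is sound, and you correctly identify the crux: negated root formulas $\forall^U v'\,(s'v'^{n'}+t'\neq 0)$ sitting under $\exists z_i$ (or $\exists^U u$). But your proposed resolution of that crux does not work. You suggest a case-split on the coset of $-t'(\vec x,z_i)/s'(\vec x,z_i)$ in $U/U^{n'}$, appealing to the Mann property to bound the number of relevant cosets. The Mann property gives finiteness of non-degenerate solutions to \emph{linear} equations over $U$; it says nothing about how many cosets of $U^{n'}$ can contain a given algebraic expression in $\vec x,z_i$, and in general $U/U^{n'}$ need not be finite. The analysis in Lemma~\ref{thm:R_exists_U_free_elim} that you invoke is about locating a \emph{free $U$-variable} relative to the subgroup generated by fixed monomials, which is a different situation: there the coset of $u$ is what is being quantified over, whereas here $z_i$ is algebraic over $\vec x$ and need not lie in $U$ at all. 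So this step, as written, is a genuine gap.

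The paper avoids this problem entirely with a simpler device. Instead of enumerating the roots by a tuple $z_1,\ldots,z_{d'}$, it uses quantifier elimination for $\RCF$ to produce, for each $m\le d$, a quantifier-free $\mathcal{L}_<$-formula $\rho_m(u)$ saying ``$u$ is the $m$-th largest root of $\sum_i t_i v^i$''. The point is that $\rho_m$ determines $u$ \emph{uniquely}, so after passing to DNF one has
\[
\exists u\,\bigl(\rho_m(u)\wedge\textstyle\bigwedge_r\phi_r\bigr)\ \Longleftrightarrow\ \bigwedge_r\exists u\,\bigl(\rho_m(u)\wedge\phi_r\bigr),
\]
and for the $\exists^U$ version the extra conjunct $\exists^U u\,\rho_m(u)$ is itself a guarded formula. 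Now each $\phi_r$ is handled in isolation. For a negated root formula $\phi_r=\forall^U w\,\chi$, uniqueness again gives $\exists u\,(\rho_m(u)\wedge\phi_r)\Leftrightarrow(\exists u\,\rho_m(u))\wedge\forall u\,(\rho_m(u)\rightarrow\phi_r)$; swapping quantifiers turns the second conjunct into $\neg\exists^U w\,\exists u\,(\rho_m(u)\wedge\neg\chi)$, and after $\RCF$ quantifier elimination on the inner $\exists u$ this is the negation of a guarded formula, to which Lemma~\ref{thm:equiv_guarded} applies. No coset analysis is needed. Your enumeration $z_1<\cdots<z_{d'}$ could be made to support the same trick, since an ordered list also picks out each root uniquely, but you do not exploit this.
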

\begin{proof}
  By quantifier elimination for real closed fields, for each $m\leq d$ there is a quantifier-free formula $\rho_m(u)$ in the language of ordered fields so that $\rho_m(u)$ holds exactly when $u$ is the $m$-th largest root of the polynomial $\sum_i t_iv^i=0$, so $\sum_{i\leq d}t_iu^i=0$ is equivalent to $\bigvee_{m\leq d}\rho_m(u)$. Splitting the existential over the disjunction $\bigvee_{m\leq d}\rho_m(u)$, we may consider a formula $\exists u\,\rho_m(u)\wedge\phi$ (respectively $\exists^U u\,\rho_m(u)\wedge\phi$ for the second part). We may further write $\phi$ in disjunctive normal form and distribute the existential over the disjuncts. So we assume $\phi$ is a conjunction of equalities, inequalities, and root formulas.

  Since $\rho_m(u)$ uniquely identifies $u$, we may treat the conjuncts distinctly---$\exists u\, \rho_m(u)\wedge\bigwedge_{r\leq k}\phi_r$ is equivalent to $\bigwedge_{r\leq k}\exists u\,\rho_m(u)\wedge\phi_r$ and $\exists^U u\, \rho_m(u)\wedge\bigwedge_{r\leq k}\phi_r$ is equivalent to $(\exists^U u\, \rho_m(u))\wedge\bigwedge_{r\leq k}\exists u\,\rho_m(u)\wedge\phi_r$.

 We will show that, for each conjunct $\exists u\,\rho_m(u)\wedge\phi_r$, there is a controlled formula so that, for each $j\in J_r$, under the assumption that $\eta^+_{r,j}(\vec x,\vec y)\wedge\forall^U\vec y\,\eta^-_{r,j}(\vec x,\vec y)$, $\phi_r$ is equivalent to $\psi_{r,j}$. Then we will take $J$ to consist of functions with $f(r)\in J_r$ for each $r$, $\eta^{\pm}_f$ to be $\bigwedge_r\eta^{\pm}_{r,f(r)}$, and $\psi_f$ be $\bigwedge_r\psi_{r,f(r)}$.

  
  If $\phi_r$ is an equality or an inequality, quantifier elimination for $\RCF$ says that $\exists u\,\rho_m(u)\wedge\phi_r$ is equivalent to a quantifier-free formula, and $\exists^Uu\,\rho_m(u)\wedge\phi_r$ is a guarded formula, so Lemma \ref{thm:equiv_guarded} applies.

  If $\phi_r$ is a root formula $\exists^Uw\,t'_nw^n+t'_0=0$ then we may swap the order of the quantifiers: 
  \[\exists u\,\rho_m(u)\wedge \exists^Uw\,t'_nw^n+t'_0=0\]
  is equivalent to
  \[\exists^Uw\exists u\,\rho_m(u)\wedge t'_nw^n+t'_0=0.\]
  Again we may apply quantifier elimination for $\RCF$: this is equivalent to $\exists^Uw\,\psi'$ for some quantifier-free $\psi'$ in the language of ordered fields. Since $\exists^Uw\,\psi'$ is a guarded formula, we may apply Lemma \ref{thm:equiv_guarded} to show that this is equivalent to a permitted formula.

  If $\phi_r$ is a negated root formula $\forall^Uw\,t'_nw^n+t'_0=0$, we use the fact that that, since $\rho_m(u)$ picks out a unique element $\exists u\,\rho_m(u)\wedge\phi_r$ is equivalent to $\exists u\,\rho_m(u)\wedge\forall u\,\rho_m(u)\rightarrow\phi_r$. Then $\exists u\,\rho_m(u)$ is equivalent to a quantifier-free formula since it is in the language of ordered fields with additional constants, and we deal with $\forall u\,\rho_m(u)\rightarrow\phi_r$ by swapping the order of the quantifiers and applying Lemma \ref{thm:equiv_guarded} as we did for a root formula.

  The remaining conjunct that might appear, $\exists^U w\, \rho_m(w)$, is also a guarded formula, so we may apply Lemma \ref{thm:equiv_guarded} to this formula.
\end{proof}

\begin{lemma}\label{thm:R_exists_U_elim}
  Let $\vec x$ be variables, let $\phi(\vec x,w)$ be a permitted formula. Then $RCFU$ proves a controlled formula
  \[\forall\vec x\,(\bigoplus_{j\in J}\eta^{\pm}_j(\vec x,\vec y_j)\rightarrow\left([\exists^Uw\,(\phi(\vec x,w)]\leftrightarrow\psi_j(\vec x,\vec y_j)\right)\]
  where each $\psi_j$ is a permitted formula.
\end{lemma}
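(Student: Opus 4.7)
The plan is to split the existential $\exists^U w\, \phi(\vec x, w)$ based on whether $w$ is forced to satisfy one of the polynomial equalities that appear inside $\phi$ non-trivially, or whether $w$ is ``transcendental enough'' over $\vec x$ that all such polynomial equalities are trivial at $w$. This mirrors the usual dichotomy in the model-theoretic argument between adding an algebraic element and adding a transcendental element, and the two sides will be handled by Lemmas \ref{thm:R_exists_U_bound_elim} and \ref{thm:R_exists_U_free_elim} respectively.

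First I would pre-process $\phi$ by pushing negations inside and rewriting negated equalities as disjunctions of strict inequalities, so that all equalities appear positively. Enumerate the polynomials $p_1(\vec x, w), \ldots, p_k(\vec x, w)$ which are the left-hand sides of the equalities appearing in $\phi$, and write each as $p_i(\vec x, w) = \sum_{j \leq d_i} p_{i,j}(\vec x) w^j$. Let $\nu(\vec x, w)$ be the conjunction $\bigwedge_i \left( p_i(\vec x, w) = 0 \rightarrow \bigwedge_j p_{i,j}(\vec x) = 0 \right)$, i.e., the statement that $w$ satisfies no non-trivial instance of any $p_i$. Then on any model, $\exists^U w\, \phi(\vec x, w)$ is equivalent to
\[
\left[ \exists^U w\, \nu(\vec x, w) \wedge \phi(\vec x, w) \right] \vee \bigvee_{i = 1}^{k} \left[ \exists^U w\, \left( p_i(\vec x, w) = 0 \wedge \bigvee_{j} p_{i,j}(\vec x) \neq 0 \wedge \phi(\vec x, w) \right) \right],
\]
since any witness either falsifies $\nu$ (putting us in some algebraic case) or satisfies it (putting us in the transcendental case).

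For the transcendental disjunct, $\nu$ is already in the form demanded by Lemma \ref{thm:R_exists_U_free_elim}, so that lemma directly produces an equivalent controlled formula whose cases are permitted formulas. For each algebraic disjunct, Lemma \ref{thm:R_exists_U_bound_elim} (applied in its $\exists^U u$ form, with the polynomial $p_i$) yields an equivalent controlled formula whose cases are again permitted. It remains to combine all $k + 1$ controlled formulas into a single controlled formula. This is done by the same product-of-partitions construction used in the previous section: given controlled formulas with case sets $J_0, J_1, \ldots, J_k$, take $J = \prod_r J_r$, let $\eta^\pm_{(j_0, \ldots, j_k)}$ be the conjunction of the component $\eta^\pm_{r, j_r}$, and let $\psi_{(j_0, \ldots, j_k)}$ be the disjunction of the component $\psi_{r, j_r}$. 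The disjunction of permitted formulas is permitted, so the resulting controlled formula is of the required form.

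I do not anticipate a significant obstacle: all the substantive work (the Mann-style reduction, the algebraic case, and the free/transcendental case) is already done in the preceding lemmas. The only thing to watch carefully is that the partition-of-cases condition is preserved under this product construction in the present setting, which follows from the fact that each individual collection $\{\eta_{r,j_r}^\pm\}_{j_r \in J_r}$ is already a partition of cases and the $\eta^\pm_j$ here are Boolean combinations of atomic formulas in $\mathcal{L}_<(\{c_\gamma\}_{\gamma \in \Gamma})$, so conjoining them introduces no difficulty. Rigidity of the combined $\psi$ follows from rigidity of each component $\psi_{r, j_r}$.
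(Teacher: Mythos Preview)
Your proposal is correct and follows essentially the same approach as the paper: split $\exists^U w\,\phi$ into a transcendental case (handled by Lemma~\ref{thm:R_exists_U_free_elim}) and finitely many algebraic cases (handled by Lemma~\ref{thm:R_exists_U_bound_elim}), then combine the resulting controlled formulas. The only cosmetic difference is that you construct $\nu$ explicitly from the equalities of $\phi$ before invoking Lemma~\ref{thm:R_exists_U_free_elim}, whereas the paper simply applies that lemma first and takes $\nu$ as part of its output; since the $\nu$ produced by the lemma is exactly the one you write down, the two presentations coincide.
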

\begin{proof}
  We apply Lemma \ref{thm:R_exists_U_free_elim}, giving us a set $J_0$, formulas $\eta^\pm_j$, $\psi_j$, and a formula $\nu$.

  Therefore, assuming $\eta^+_j(\vec x,\vec y_j)\wedge\forall^U\,\eta^-_j(\vec x,\vec y_j)$, the formula $\exists^Uw\,\phi$ is equivalent to $\exists^Uw\,(\nu\vee\bigvee_{i\leq m}\left(\sum_{i'\leq n_i}t_{i,i'}w^{i'}=0\wedge\bigvee_{i'}t_{i,i'}\neq 0\right))\wedge\phi$. Distributing the disjunction and the existential, we need to show that $\exists^Uw\,\nu\wedge\phi$ and each $\exists^Uw\, \sum_{i'\leq n_i}t_{i,i'}w^{i'}=0\wedge\bigvee_{i'}t_{i,i'}\wedge\phi$ is equivalent to a permitted formula.

  The first is equivalent to the $\psi_j$ given by our application of Lemma \ref{thm:R_exists_U_free_elim}. For each other disjunct, we apply Lemma \ref{thm:R_exists_U_bound_elim}.
\end{proof}

\begin{lemma}
  Let $\vec x$ be variables and let $\phi(\vec x,w)$ be a permitted formula. Then $RCFU$ proves a controlled formula
  \[\forall\vec x\,(\bigoplus_{j\in J}\eta^{\pm}_j(\vec x,\vec y_j)\rightarrow\left((\exists w\,\neg Uw\wedge\nu(\vec x,\vec y_j,w)\wedge\phi(\vec x,w))\leftrightarrow\psi_j(\vec x,\vec y_j)\right))\]
  where $\nu$ is a conjunction of the form $\bigwedge_{i\leq n}\forall^U\vec u\, \left(\sum_{i'\leq n_{i}}t_{i,i'}w^{i'}=0\rightarrow \bigwedge_{i'\leq n_{i,i'}}t_{i,i'}\neq 0\right)$
\end{lemma}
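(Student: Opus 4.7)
The plan is to mirror the structure of Lemma~\ref{thm:R_exists_U_free_elim}, adapted so that the witness $w$ lies outside $U$ rather than inside. First I would put $\phi$ in disjunctive normal form, rewrite each negated equality or inequality as a positive Boolean combination of equalities and inequalities, and distribute the existential quantifier over the disjunction. This lets us assume $\phi$ is a conjunction of (positive) atomic formulas---equalities and inequalities---together with root formulas and their negations. Conjuncts that do not involve $w$ can be pulled out of $\exists w$ immediately.

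The heart of the argument is to construct $\nu$ so that, under its assumption, every occurrence of $w$ in the remaining conjuncts becomes controlled. For each equality $\sum_{i'\le d}t_{i'}(\vec x,\vec y)w^{i'}=0$ occurring in $\phi$, include in $\nu$ the conjunct asserting that this equation implies all coefficients $t_{i'}$ vanish. For each root formula $\exists^U v\,t_n(\vec x,\vec y,w)v^n+t_0(\vec x,\vec y,w)=0$, expand $t_nv^n+t_0$ as a polynomial $\sum_{i'}s_{i'}(\vec x,\vec y,v)w^{i'}$ in $w$ whose coefficients depend on $v$, and include in $\nu$ the conjunct $\forall^U v\,(\sum_{i'}s_{i'}w^{i'}=0\to\bigwedge_{i'}s_{i'}=0)$, which matches the required form of $\nu$. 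Under this $\nu$, each equality involving $w$ becomes equivalent to the $w$-free $\bigwedge_{i'}t_{i'}=0$, each root formula reduces to the $w$-free $\exists^U v\,\bigwedge_{i'}s_{i'}=0$, and negated root formulas reduce dually. Every $w$-free piece can then be pulled out of the existential.

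After these reductions, the remaining existential takes the form $\exists w\,\neg Uw\wedge\nu\wedge\chi(\vec x,\vec y,w)$, where $\chi$ is a conjunction of inequalities in $w$ over $\mathcal{L}_<(\{c_\gamma\})$. I would argue this is equivalent to $\exists w\,\chi(\vec x,\vec y,w)$: by quantifier elimination for $\RCF$, $\exists w\,\chi$ is equivalent to a quantifier-free formula in $\mathcal{L}_<(\{c_\gamma\})$ not involving $w$, and whenever $\chi$ defines a nonempty open set, a witness $w\notin U$ satisfying $\nu$ can be found inside it by density together with a smallness-type argument---for each fixed $v\in U$ the polynomial $\sum_{i'}s_{i'}(v)w^{i'}$ has only finitely many roots, and $\RCFU$ ensures that the intersection of such complements with the complement of $U$ remains rich enough in any open interval, in analogy with the use of \cite{MR2235481}[Lemma 2.2(2)] in the algebraic case. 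Packaging this with the case-splitting bookkeeping of Lemma~\ref{thm:R_exists_U_free_elim} then produces a controlled formula of the required shape.

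The main obstacle is the density/genericity step: the $\forall^U v$ in $\nu$ parameterizes infinitely many polynomial equations in $w$, and we must verify that in every model of $\RCFU$ a witness $w\notin U$ simultaneously avoids all nontrivial instances while landing in the prescribed open set defined by $\chi$. This is the real-closed analogue of using smallness to produce a transcendental element as in the $\ACFm$ case, and once it is in hand, the remainder of the proof is routine bookkeeping in the style of Lemma~\ref{thm:R_exists_U_free_elim}.
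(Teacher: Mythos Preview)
Your approach is essentially the paper's: put $\phi$ in DNF, build $\nu$ so that under it every equality and every root formula becomes $w$-free, pull the $w$-free material out, and handle the residual inequalities by density plus quantifier elimination for $\RCF$. Two small points are worth noting.

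First, after a root formula $\exists^U v\,(t_n v^n+t_0=0)$ collapses (under $\nu$) to the $w$-free formula $\exists^U v\,\bigwedge_{i'} s_{i'}(\vec x,\vec y,v)=0$, this is a \emph{guarded} formula, not yet a permitted one. The paper explicitly invokes Lemma~\ref{thm:equiv_guarded} at this point to replace it, inside a controlled formula, by a permitted formula; you should do the same rather than folding it into unspecified ``bookkeeping''.

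Second, your ``main obstacle'' is handled far more briskly in the paper than you anticipate. Once everything except strict inequalities has been pulled out, the paper simply says: since $\RCFU$ proves the complement of $U$ is dense, there is a $w\notin U$ satisfying the inequalities if and only if some element does, and then applies $\RCF$ quantifier elimination. The paper does not mount a separate smallness argument to realize $\nu$ simultaneously; it treats the density of the complement of $U$ as sufficient. So either you are being more scrupulous than the paper at exactly this step, or the smallness argument you envision is not actually needed in the author's intended reading---either way, your plan is aligned with the paper's, and the remaining work is the Lemma~\ref{thm:equiv_guarded} application you omitted.
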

\begin{proof}
  As usual, we may write $\phi$ in disjunctive normal form and distribute the $\exists$ over the disjunction. So we may assume $\phi$ is a conjunction of equalities, inequalities, and root formulas.

  We will take $\nu$ to be the conjunction of:
  \begin{itemize}
  \item $\sum_i t_iw^i=0\rightarrow\bigwedge_i t_i=0$ for each equality $\sum_i t_iw^i=0$ appearing in $\phi$,
  \item $\forall^Uu\, \sum_{i\leq m}(t_{n,i}u^n+t_{0,i})w^i=0\rightarrow\bigwedge_i t_i=0$ for each root formula appearing in $\phi$.
  \end{itemize}
  Under the assumption $\nu$, any equality is equivalent to $\bigwedge_i t_i=0$ and any root formula is equivalent to $\exists^Uu\,\bigwedge_{i\leq m}t_{n,i}u^n+t_{0,i}=0$.  The former is a quantifier-free formula not involving $w$. The latter is a guarded formula not involving $w$, so we may apply Lemma \ref{thm:equiv_guarded} to find a controlled formula where, in each clause and under the additional assumption $\nu$, $\phi$ is equivalent to a permitted formula in which $w$ only appears in inequalities.

  We may then remove anything which does not depend on $w$ from the existential. The remaining formulas are inequalities. Since $\RCFU$ proves that the complement of $U$ is dense, there is a $w$ not in $U$ satisfying these inequalities just if any element satisfies these inequalities, so quantifier elimination for real closed fields applies.
\end{proof}

\begin{lemma}
  Let $\vec x$ be variables, let $\phi(\vec x,w)$ be a permitted formula. Then $RCFU$ proves a controlled formula
  \[\forall\vec x\,(\bigoplus_{j\in J}\eta^{\pm}_j(\vec x,\vec y_j)\rightarrow\left([\exists w\,(\phi(\vec x,w)]\leftrightarrow\psi_j(\vec x,\vec y_j)\right)\]
  where each $\psi_j$ is a permitted formula.
\end{lemma}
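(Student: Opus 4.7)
The plan is to split $\exists w\,\phi(\vec x,w)$ based on whether the witness $w$ lies in $U$, and then, for the case $w\notin U$, based on whether $w$ is algebraic over the parameters in a way detectable from $\phi$. Observe that $Uw$ is equivalent to the root formula $\exists^Uv\,w=v$, so $\neg Uw$ is itself permitted. Hence
\[\exists w\,\phi(\vec x,w) \;\leftrightarrow\; \exists^Uw\,\phi(\vec x,w) \;\vee\; \exists w\,(\neg Uw\wedge\phi(\vec x,w)),\]
and the first disjunct is handled directly by Lemma \ref{thm:R_exists_U_elim}, producing a controlled formula with permitted $\psi_j$.

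For the second disjunct I would first apply the preceding lemma to $\phi$, which produces a controlled formula whose $\psi_j$ is equivalent to $\exists w\,(\neg Uw\wedge\nu_j(\vec x,\vec y_j,w)\wedge\phi)$ in each case, where $\nu_j$ is the built-in constraint asserting that every polynomial obtained (directly from an equality in $\phi$ or by fixing some $u\in U$ in a root formula of $\phi$) is trivial whenever $w$ makes it zero. This covers the ``sufficiently transcendental'' case. It remains to handle the complement $\exists w\,(\neg Uw\wedge\neg\nu_j\wedge\phi)$, and since $\neg\nu_j$ is a finite disjunction of clauses each asserting that $w$ is a root of some non-trivial polynomial coming from $\phi$, I distribute $\exists w$ over this disjunction.

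The resulting disjuncts come in two shapes. In the first shape, the polynomial comes from an equality in $\phi$ and its coefficients involve only $\vec x$; then a single application of Lemma \ref{thm:R_exists_U_bound_elim} (the plain $\exists w$ version, with $\neg Uw\wedge\phi$ playing the role of the permitted formula) gives an equivalent controlled formula with permitted $\psi_j$. In the second shape, the polynomial comes from instantiating a root formula $\exists^Uu\,r_i(\vec x,u,w)=0$ appearing in $\phi$, which introduces an auxiliary $\exists^Uu$; after pulling $\exists^Uu$ outside $\exists w$, I apply Lemma \ref{thm:R_exists_U_bound_elim} to the inner $\exists w$ (with parameters $\vec x,u$) to obtain a permitted formula in $\vec x,u,\vec y$, and then apply Lemma \ref{thm:R_exists_U_elim} to the outer $\exists^Uu$ to obtain a controlled permitted formula in $\vec x,\vec y$.

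Finally, I would assemble the controlled formulas from the two disjuncts and the various sub-cases using the closure of controlled formulas under conjunction and disjunction established earlier in the section. The only subtle step is the second shape above: a root formula contributes a polynomial constraint on $w$ only \emph{relative to} an auxiliary $u\in U$, so the elimination must strip off the polynomial constraint on $w$ first and then the $U$-quantifier over $u$. Inverting the order would not work, because until $w$ has been eliminated we do not yet have a permitted formula in $\vec x,u$ to feed into Lemma \ref{thm:R_exists_U_elim}; managing this interleaving, together with tracking how the case-splits produced by each lemma compose, is the main bookkeeping obstacle.
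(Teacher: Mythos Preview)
Your proposal is correct and matches the paper's proof essentially step for step: split on $Uw$, invoke Lemma~\ref{thm:R_exists_U_elim} for the $U$-case, invoke the preceding lemma for the transcendental non-$U$ case, and then for each disjunct of $\neg\nu$ swap quantifiers (when an auxiliary $\exists^U u$ is present), rewrite $\neg Uw$ as the negated root formula $\forall^U v\,v\neq w$, and apply Lemma~\ref{thm:R_exists_U_bound_elim} followed if needed by Lemma~\ref{thm:R_exists_U_elim}. The only cosmetic discrepancy is that $\nu$ in the preceding lemma is a single formula, not indexed by $j$, so you should write $\nu$ rather than $\nu_j$.
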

\begin{proof}
  $\exists w\,\phi(\vec x,w)$ is equivalent to $\exists^Uw\,\phi(\vec x,w)\vee\exists w\,\neg Uw\wedge\phi(\vec x,w)$. Lemma \ref{thm:R_exists_U_elim} shows that the first disjunct is equivalent to a permitted formula, so it remains to consider the second.

  Applying the previous lemma to the second disjunct, we get a $\nu$ so that $\exists w\, \neg Uw\wedge\nu\wedge\phi(\vec x,w)$ is equivalent (under suitable assumptions $\eta^+$ and $\eta^-$) to a permitted formula, so we consider the case where $\nu$ fails. $\neg\nu$ is a disjunction, so we may distribute over the disjunction, so it suffices to consider some $\exists w\, \neg Uw\wedge\exists^U u\,\sum_{i\leq n}t_iw^i=0\wedge\bigvee_{i\leq n}t_i\neq 0\wedge\phi(\vec x,w)$ (where the $\exists^U$ may be missing).

  We may swap the order of the quantifiers and replace $\neg Uw$ with the negated root formula $\forall^Uv\,v\neq w$. That is, our formula is equivalent to
  \[\exists^U u\, \exists w\, \sum_{i\leq n}t_iw^i=0\wedge\bigvee_{i\leq n}t_i\neq 0\wedge (\forall^Uv\,v\neq w)\wedge \phi(\vec x,w).\]
  The matrix is a permitted formula, so we may apply Lemma \ref{thm:R_exists_U_bound_elim} followed, if necessary, by Lemma \ref{thm:R_exists_U_elim} to show that this is (under suitable assumptions $\eta^+$ and $\eta^-$) equivalent to a permitted formula.
\end{proof}

\begin{lemma}
  For any formula $\phi(\vec x)$, $\RCFU$ proves a controlled formula
  \[\forall\vec x\,(\bigoplus_{j\in J}\eta^{\pm}_j(\vec x,\vec y_j)\rightarrow\left(\phi(\vec x)\leftrightarrow\psi_j(\vec x,\vec y_j)\right)\]
  where each $\psi_j$ is a permitted formula.
\end{lemma}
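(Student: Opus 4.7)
My plan is to prove this by induction on the structure of $\phi$, using the closure properties of controlled formulas and the lemma proved immediately above (handling $\exists w$ over a permitted formula).

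For the base cases, every atomic formula of $\mathcal{L}_<(\{c_\gamma\}_{\gamma\in\Gamma})$ is itself a permitted formula, so it is equivalent to the trivial controlled formula with one case $\eta^+_0 = \top$, $\eta^-_0 = \top$, $\psi_0 = \phi$. Atomic formulas of the form $Ut$ are equivalent to the root formula $\exists^U v\,(v - t = 0)$ (once we write $t$ as a suitable expression), so they too are permitted. The connectives $\wedge, \vee, \neg$ are handled exactly as in the ACF section: negations and conjunctions of controlled formulas are controlled, and since permitted formulas are closed under Boolean combinations, the resulting $\psi_j$'s remain permitted.

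The interesting case is $\phi(\vec x) = \exists w\,\phi'(\vec x, w)$. By the inductive hypothesis applied to $\phi'$, we obtain a controlled formula
\[
\bigoplus_{j\in J}\eta^{\pm}_j(\vec x, w, \vec y_j)\rightarrow\psi_j(\vec x, w, \vec y_j)
\]
with each $\psi_j$ permitted. Unfolding the $\oplus$ and pushing $\exists w$ inside the disjunction, $\exists w\,\phi'(\vec x,w)$ is equivalent to
\[
\bigvee_{j\in J}\exists w\,\Bigl[\exists^U\vec y_j\,\eta^+_j(\vec x, w, \vec y_j)\wedge\forall^U\vec y'_j\,\eta^-_j(\vec x, w, \vec y'_j)\wedge\psi_j(\vec x, w, \vec y_j)\Bigr].
\]
For each $j$, I would first remove the inner $\exists^U\vec y_j$ and $\forall^U\vec y'_j$ quantifiers, leaving a permitted formula in $(\vec x, w)$ (inside a controlled wrapping in $\vec x$). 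Since $\eta^+_j$ and $\eta^-_j$ are Boolean combinations of atomic formulas in $\mathcal{L}_<(\{c_\gamma\})$, the formulas $\exists^U\vec y_j\,\eta^+_j(\vec x, w, \vec y_j)$ and $\exists^U\vec y'_j\,\neg\eta^-_j(\vec x, w, \vec y'_j)$ are iterated guarded formulas, and repeated application of Lemma~\ref{thm:equiv_guarded} (peeling off one variable at a time) together with Lemma~\ref{thm:R_exists_U_elim} converts them into controlled formulas whose cases have permitted matrices. Negating the second, and conjoining with $\psi_j$, yields a controlled formula (in $\vec x, w$) whose cases have permitted matrices.

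Having done this reduction, the $j$-th disjunct becomes $\exists w\,\bigoplus_{k\in K_j}\theta^{\pm}_{j,k}(\vec x, w, \vec z_{j,k})\rightarrow\chi_{j,k}(\vec x, w, \vec z_{j,k})$ with each $\chi_{j,k}$ permitted. One more pass of distributing $\exists w$ over the $\oplus$ and then applying the previous lemma to each inner $\exists w\,\chi_{j,k}(\vec x, w, \vec z_{j,k})$ (treating $\vec z_{j,k}$ as parameters together with $\vec x$) produces a controlled formula in $(\vec x, \vec z_{j,k})$ whose matrices are permitted. Reassembling the controlled structures, using the fact that a disjunction of controlled formulas is again a controlled formula (with the partition of cases built from tuples of the individual partitions, exactly as in the previous section), completes the inductive step. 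The principal obstacle is the quantifier-reshuffling in the existential step: one must carefully verify that after pushing $\exists w$ across the $\vee$ and absorbing the $\exists^U\vec y_j / \forall^U\vec y'_j$ witnesses into a fresh controlled structure, the resulting formula still has the restricted form needed to invoke the previous lemma. Once this bookkeeping is handled, the result follows immediately, and the corresponding corollary---that every formula is equivalent to a Boolean combination of KG-formulas---drops out by writing the controlled form in disjunctive normal form.
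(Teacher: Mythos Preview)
Your base cases and the handling of connectives match the paper's proof exactly; the paper likewise replaces $Ut$ by $\exists^U u\,(t=u)$ and appeals to the usual closure of controlled formulas under Boolean operations. For the existential step the paper writes only ``The quantifier case is covered by the previous lemma,'' and you are right to notice that something must be said: the inductive hypothesis hands you a \emph{controlled} formula in $(\vec x,w)$ with permitted matrices, whereas the previous lemma eliminates $\exists w$ only over a \emph{permitted} formula.

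The bridge you sketch, however, is circular. When you apply Lemma~\ref{thm:equiv_guarded} or Lemma~\ref{thm:R_exists_U_elim} to absorb the $\exists^U\vec y_j$ and $\forall^U\vec y'_j$ blocks, those lemmas return a controlled formula whose case conditions $\theta^{\pm}_{j,k}$ depend on \emph{all} free variables of the input---in particular on $w$. So your claim that the result sits ``inside a controlled wrapping in $\vec x$'' is not correct: the wrapping is in $(\vec x,w)$. Consequently, when you next ``distribute $\exists w$ over the $\oplus$'' you do not isolate $\exists w\,\chi_{j,k}$; you get
\[
\exists w\bigl[\theta^+_{j,k}(\vec x,w,\vec z)\wedge\forall^U\vec z'\,\theta^-_{j,k}(\vec x,w,\vec z')\wedge\chi_{j,k}(\vec x,w,\vec z)\bigr],
\]
which has exactly the same shape you began with---an $\exists w$ over a conjunction of a permitted formula with a $\forall^U$-block. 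Iterating reproduces the problem rather than reducing it. This is precisely the ``principal obstacle'' you flag, and it is not bookkeeping: it is the substantive step. What is needed is a mechanism that removes $w$ from the case conditions, e.g.\ by enumerating the polynomials in the $\eta^{\pm}_j,\psi_j$ containing $w$ and splitting into the bounded/algebraic case (Lemma~\ref{thm:R_exists_U_bound_elim}) versus the transcendental case (the $\nu$-clause of the lemma handling $\exists w\,\neg Uw\wedge\nu\wedge\phi$), exactly parallel to the explicit argument given for Theorem~3.10 in the ACF section. The paper's one-line appeal to ``the previous lemma'' is presumably shorthand for this; your unfolding does not supply it.
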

\begin{proof}
  By induction on $\phi$. If $\phi$ is an equality or inequality, this is immediate. If $\phi$ is $Ut$ or $\neg Ut$, we replace it with the permitted formula $\exists^Uu\,t=u$ or $\neg\exists^Uu\,t=u$, respectively.

  The inductive cases for connectives are immediate as usual. The quantifier case is covered by the previous lemma.
\end{proof}

\begin{theorem}
  For any formula $\phi(\vec x)$, $\RCFU$ proves that $\phi$ is a equivalent to a Boolean comination of KG-formulas.
\end{theorem}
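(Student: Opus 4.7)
The plan is to invoke the previous lemma, which gives $\phi(\vec x)$ equivalent in $\RCFU$ to a controlled formula $\bigoplus_{j\in J}\eta^{\pm}_j\rightarrow\psi_j$ with each $\psi_j$ a permitted formula, and then to unpack the controlled formula and recognize it as a Boolean combination of KG-formulas. Unpacking the $\bigoplus$ notation and pulling $\forall^U\vec y'_j\,\eta^-_j$ out of the outer existential (since it does not mention $\vec y_j$), we obtain
\[\phi(\vec x)\leftrightarrow\bigvee_{j\in J}\left[(\forall^U\vec y'_j\,\eta^-_j(\vec x,\vec y'_j))\wedge\exists^U\vec y_j\,(\eta^+_j(\vec x,\vec y_j)\wedge\psi_j(\vec x,\vec y_j))\right].\]

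Next, I would identify the syntactic shape of each piece. The formula $\forall^U\vec y'_j\,\eta^-_j$ is the negation of the KG-formula $\exists^U\vec y'_j\,(\top\wedge\neg\eta^-_j)$, since $\top$ is a $U$-formula and $\neg\eta^-_j$ is quantifier-free in $\mathcal{L}_<(\{c_\gamma\}_{\gamma\in\Gamma})$. Each root formula $\exists^U u\,(t_nu^n+t_0=0)$ occurring inside $\psi_j$ is itself a KG-formula (again with $\theta=\top$), and atomic formulas in $\mathcal{L}_<(\{c_\gamma\}_{\gamma\in\Gamma})$ are trivially KG-formulas, so the permitted formula $\psi_j$ is already a Boolean combination of KG-formulas.

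The main step is to express $\exists^U\vec y_j\,(\eta^+_j\wedge\psi_j)$ itself as a Boolean combination of KG-formulas. I would put $\eta^+_j\wedge\psi_j$ into disjunctive normal form with atomic formulas in $\mathcal{L}_<(\{c_\gamma\}_{\gamma\in\Gamma})$ and root formulas as atoms, then distribute $\exists^U\vec y_j$ over the resulting disjunction. In each disjunct the $\exists^U$ quantifiers of the positive root formulas commute with $\exists^U\vec y_j$ and can be absorbed, leaving formulas of the form
\[\exists^U\vec y_j,\vec u\,\left[\alpha(\vec x,\vec y_j,\vec u)\wedge\bigwedge_r\neg R_r(\vec x,\vec y_j)\right]\]
where $\alpha$ is quantifier-free in $\mathcal{L}_<(\{c_\gamma\}_{\gamma\in\Gamma})$ and each $\neg R_r$ is a negated root formula.

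The hard part will be the negated root formulas nested inside the outer existential; I expect this to be the main obstacle. To address it I would exploit the rigidity of $\psi_j$ under $\eta^{\pm}_j$: in the active case (the only one contributing to the disjunction, since $\forall^U\vec y'_j\,\eta^-_j$ enforces this) the truth value of $\psi_j$ is determined by $\vec x$ alone, which yields
\[\exists^U\vec y_j\,(\eta^+_j\wedge\psi_j)\leftrightarrow(\exists^U\vec y_j\,\eta^+_j)\wedge\neg\exists^U\vec y_j\,(\eta^+_j\wedge\neg\psi_j).\]
Since $\neg\psi_j$ is again permitted, iterating the DNF-and-pull-out reduction on the inner existential resolves the remaining nested negations, ultimately expressing $\phi$ as a Boolean combination of KG-formulas of the form $\exists^U\vec z\,(\theta\wedge\psi)$ with $\theta=\top$ and quantifier-free matrix $\psi$ in $\mathcal{L}_<(\{c_\gamma\}_{\gamma\in\Gamma})$.
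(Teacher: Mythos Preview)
Your setup matches the paper's: invoke the preceding lemma, unpack the $\bigoplus$, recognise $\forall^U\vec y'_j\,\eta^-_j$ as a negated KG-formula, put $\psi_j$ in DNF, distribute $\exists^U\vec y_j$, and absorb the positive root subformulas into the block of $U$-quantifiers. The divergence is in how you handle a negated root formula $\forall^U u\,(t_nu^n+t_0\neq 0)$ sitting under $\exists^U\vec y_j$, and here your argument has a genuine gap.

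Your proposal is to invoke rigidity to rewrite
\[
\exists^U\vec y_j\,(\eta^+_j\wedge\psi_j)\ \longleftrightarrow\ (\exists^U\vec y_j\,\eta^+_j)\wedge\neg\exists^U\vec y_j\,(\eta^+_j\wedge\neg\psi_j),
\]
and then ``iterate'' on the right-hand existential. But $\neg\psi_j$ is again a permitted formula of the same shape as $\psi_j$, and applying the same move to $\exists^U\vec y_j\,(\eta^+_j\wedge\neg\psi_j)$ returns you to $\exists^U\vec y_j\,(\eta^+_j\wedge\psi_j)$. Nothing decreases; the iteration loops. Rigidity only tells you that $\psi_j$ is constant in $\vec y_j$ in the active case---it does not tell you that individual root subformulas of $\psi_j$ are, so you also cannot apply it after the DNF step to single conjuncts. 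Finally, your stated target (KG-formulas all with $\theta=\top$) is too restrictive; the $U$-formula slot $\theta$ is genuinely needed.

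The paper's fix is algebraic, not logical: it exploits that the terms $t_n,t_0$ in a root formula are \emph{monomials}. Writing $s=s_xs_y$ and $t=t_xt_y$ with $s_x,t_x$ monomials in $\vec x$ and $s_y,t_y$ monomials in $\vec y_j$, one has
\[
\forall^Uu\,(su^m\neq t)\ \longleftrightarrow\ (\forall^Uv\,s_xv\neq t_x)\ \vee\ \bigl(\exists^Uv\,(s_xv=t_x)\wedge\forall^Uu\,(s_yu^m\neq vt_y)\bigr).
\]
The first disjunct is a negated KG-formula in $\vec x$ alone and can be pulled outside the $\exists^U\vec y_j$. In the second disjunct, the inner clause $\forall^Uu\,(s_yu^m\neq vt_y)$ involves only variables bound by $\exists^U$ (namely $\vec y_j,v,u$), so it is a $U$-formula and slides into the $\theta$-slot of the resulting KG-formula. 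This is the missing idea.
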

\begin{proof}
  By the previous lemma, we have
  \[\forall\vec x(\bigoplus_{j\in J}\eta^{\pm}_j(\vec x,\vec y_j)\rightarrow\left(\phi(\vec x)\leftrightarrow\psi_j(\vec x,\vec y_j)\right).\]

  Therefore $\phi$ is equivalent to
  \[\bigvee_j\forall^U\vec y_j\,\eta^-_j(\vec x,\vec y_j)\wedge\exists^U\vec y_j\,\eta^+_j(\vec x,\vec y_j)\wedge\psi_j(\vec x,\vec y_j).\]

  This is almost a Boolean combination of KG-formulas, except that we might have root formulas and negated root formulas in the $\psi_j$. As usual, we may put $\psi_j$ in disjunctive normal form and distribute the existential, so we may assume $\psi_j$ is a conjunction. For root formulas, we can pull the existential quantifiers out---that is, $\exists^U\vec y_j\,\eta^+(\vec x,\vec y_j)\wedge\exists^Uu\,su^m=t\wedge\psi'_j$ is equivalent to $\exists^U\vec y_j,u\,\eta^+(\vec x,\vec y_j)\wedge su^m=t\wedge\psi'_j$.

  We are left with negated root formulas, $\forall^Uu\,su^m\neq t$ where $s,t$ are monomials. We separate them into a monomial in $\vec x$ and a monomial in $\vec y_j$: $s=s_xs_y$ and $t=t_xt_y$. One reason we could have $\forall^Uu\,su^m\neq t$ is because $t_x/s_x$ is not in $U$, and therefore there are no choices of $s_y,t_y,u$ in $U$ with $t_x/s_x=s_yu^m/t_y$. On the other hand if $t_x/s_x$ is in $U$, we can name it as part of $\vec y_j$ and then quantifier over it.

  Formally, $\forall^Uu\,su^m\neq t$ is equivalent to $(\forall^Uv\,s_xv\neq t_x)\vee(\exists^Uv\,s_xv=t_x\wedge\forall^Uu\,s_yu^m\neq vt_y)$.  Therefore $\exists^U\vec y_j\,\eta^+_j(\vec x,\vec y_j)\wedge(\forall^Uu\,su^m\neq t)\wedge\psi'_j(\vec x,\vec y_j)$ is equivalent to
  \[\left[\forall^Uv\,s_xv\neq t_x\right]\vee\left[\exists^U\vec y_j,v\,\eta^+_j(\vec x,\vec y_j)\wedge s_xv=t_x\wedge (\forall^Uu\,s_yu^m=vt_y)\wedge \psi'_j\right].\]
  Applying this to each negated root formula in each $\psi_j$, we are left with a Boolean combination of KG-formulas.
\end{proof}

\printbibliography

\end{document}